\numberwithin{equation}{section}
\theoremstyle{plain}
\newtheorem{thm}{Theorem}[section]
\newtheorem{lemma}{Lemma}[section]
\theoremstyle{remark}
\newtheorem{remark}{Remark}[section]
\begin{document}

\begin{frontmatter}
\title{On admissible estimation of a mean vector when the scale is unknown}
\runtitle{Admissibility}

%
%
%

\begin{aug}
\author[A]{
\fnms{Yuzo} \snm{Maruyama}\ead[label=e1]{maruyama@port.kobe-u.ac.jp}},
\and
\author[B]{
\fnms{William, E.} \snm{Strawderman}\ead[label=e2]{straw@stat.rutgers.edu}}
\address[A]{Graduate School of Business Administration,
Kobe University \\ \printead{e1}}

\address[B]{Department of Statistics and Biostatistics,
Rutgers University \\
\printead{e2}}
\runauthor{Y.~Maruyama and W.~E.~Strawderman}
\end{aug}

\begin{abstract}
We consider admissibility of generalized Bayes estimators of the mean of a multivariate normal distribution when the scale is unknown under quadratic loss. 
The priors considered put the improper invariant prior on the scale while the prior on the mean has a hierarchical normal structure conditional on the scale. 
This conditional hierarchical prior is essentially that of Maruyama and Strawderman (2021, Biometrika) (MS21)
which is indexed by a hyperparameter $a$. 
In that paper $a$ is chosen so this conditional prior is proper which corresponds to $a>-1$. 
This paper extends MS21 by considering improper conditional priors 
with $a$ in the closed interval $[-2, -1]$, and establishing admissibility for such $a$. 
The authors, in Maruyama and Strawderman (2017, JMVA), have earlier shown that such conditional priors with $a < -2$ lead to inadmissible estimators.
This paper therefore completes the determination of admissibility/inadmissibility for this class of priors. 
It establishes the the boundary as $a = -2$, with admissibility holding for $a\geq -2$ and inadmissibility for $a < -2$. 
This boundary corresponds exactly to that in the known scale case for these conditional priors, and which follows from Brown (1971, AOMS). 
As a notable benefit of this enlargement of the class of admissible generalized Bayes estimators, we give admissible and minimax estimators in all dimensions greater than $2$ as opposed to MS21 which required the dimension to be greater than $4$.
In one particularly interesting special case, we establish that the joint Stein prior for the unknown scale case leads to a minimax admissible estimator in all dimensions greater than $2$.
\end{abstract}

\begin{keyword}[class=MSC]
\kwd[Primary ]{62C15}  
\kwd[; secondary ]{62C20}
\end{keyword}

\begin{keyword}
\kwd{admissibility}
\kwd{Bayes estimators}
\kwd{minimaxity}
\end{keyword}
\end{frontmatter}

\section{Introduction}
We consider admissibility of generalized Bayes estimators of the mean of a multivariate normal distribution when the scale is unknown under quadratic loss.
Specifically, we consider the model $X\sim N_p(\theta,\sigma^2I)$, $S\sim \sigma^2\chi_n^2$
where $X$ and $S$ are independent with densities
\begin{equation}\label{our.density}
 \begin{split}
f(x\mid\theta,\eta)&=\frac{\eta^{p/2}}{(2\pi)^{p/2}}\exp\left(-\frac{\eta\|x-\theta\|^2}{2}\right), \\
f(s\mid\eta)&=\frac{\eta^{n/2}s^{n/2-1}}{\Gamma(n/2)2^{n/2}}\exp\left(-\frac{\eta s}{2}\right),
\end{split}
\end{equation}
with $\eta=1/\sigma^2$. 
The loss function is scaled quadratic loss 
\begin{equation}\label{loss}
L(\delta;\theta,\eta)= \eta\|\delta(x,s)-\theta\|^2.
\end{equation}
The estimators considered are generalized Bayes with respect to the generalized priors on $(\theta,\eta)$,
\begin{equation}\label{Prior}
\pi_*(\theta,\eta)=\frac{1}{\eta}\times\int\frac{\eta^{p/2}}{(2\pi)^{p/2}g^{p/2}}\exp\left(-\frac{\eta}{2g}\|\theta\|^2\right) \pi(g)\rd g,
\end{equation}
with the following hierarchical structure
\begin{equation}\label{Prior.1}
 \theta\mid \{g,\eta\} \sim N_p(0,(g/\eta) I),\quad \pi(g)= \frac{1}{(g+1)^{a+2}}\left(\frac{g}{g+1}\right)^b, \quad \eta \sim  \frac{1}{\eta}. 
\end{equation}
As detailed 
in Section \ref{sec:expression}, 
the generalized Bayes estimator under this prior is given by
\begin{equation}\label{delta_*_intro}
 \delta_*=\left(1-\frac
{\int_0^\infty (g+1)^{-p/2-1}(1+\|x\|^2/\{s(g+1)\})^{-p/2-n/2-1}\pi(g)\rd g}
{\int_0^\infty (g+1)^{-p/2}(1+\|x\|^2/\{s(g+1)\})^{-p/2-n/2-1}\pi(g)\rd g}
\right)x
\end{equation}
which is well-defined if
\begin{equation*}
 \int_0^\infty\frac{\pi(g)\rd g}{(g+1)^{p/2}}=\int_0^\infty
\left(\frac{g}{g+1}\right)^b\frac{\rd g}{(g+1)^{p/2+a+2}}<\infty.
\end{equation*}
This is assured provided
\begin{equation}
 p/2+a+1>0 \text{ and } b+1>0
\end{equation}
which we assume throughout this paper.

Admissibility of the estimator \eqref{delta_*_intro} is established in \cite{Maruyama-Strawderman-2020-arxiv} for the case $-1< a  <n/2$ and $ b>-1$, i.e., in cases where 
$\pi(g)$ is proper ($ \int_0^\infty  \pi(g)\rd g<\infty$), even though,
since the prior on $\eta$ is proportional to $1/\eta$, the prior $\pi_*(\theta,\eta)$ is improper.
This paper considers the more challenging problem where $\pi(g)$ is itself improper
($ \int_0^\infty  \pi(g)\rd g=\infty$).
We establish admissibility in two such cases:
\begin{align}
&\text{\textbf{CASE I} } \ \max(-p/2-1,-2)<a\leq -1 \text{ and }b>-1 \text{ under }p\geq 1,\label{our.a.b} \\
&\text{\textbf{CASE II} } \ a=-2\text{ and }b\geq 0 \text{ under } p\geq 3.\label{our.A.B}
\end{align}
The main theorem of this paper is as follows.
\begin{thm}\label{thm.main.intro}
Under either assumption, \eqref{our.a.b} or \eqref{our.A.B}, the generalized Bayes
estimator under $\pi_*$ is admissible among all estimators.
\end{thm}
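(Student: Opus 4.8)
The plan is to prove admissibility by Blyth's method. Suppose, for contradiction, that $\delta_*$ is inadmissible, so there is an estimator $\delta_0$ with $R(\delta_0;\theta,\eta)\le R(\delta_*;\theta,\eta)$ for all $(\theta,\eta)$ and strict inequality somewhere. After first checking that $R(\delta_*;\theta,\eta)$ is finite and continuous — which, given the improper $\pi(g)$, requires a direct analysis of the shrinkage factor in \eqref{delta_*_intro} under the standing assumptions $p/2+a+1>0$, $b+1>0$ — we obtain a compact set $K$ in the parameter space and $c>0$ with $R(\delta_*)-R(\delta_0)\ge c$ on $K$. We then introduce finite measures $\pi_i=h_i^2\,\pi_*$, where $0\le h_i\le1$, $h_i\equiv1$ on $K$ for large $i$, $h_i\to1$ pointwise, and each $\pi_i$ is finite; writing $\delta_i$ for the proper Bayes rule under $\pi_i$, the usual chain
\[
0<c\,\pi_i(K)\le\int\bigl[R(\delta_*)-R(\delta_0)\bigr]\,\rd\pi_i\le\int\bigl[R(\delta_*)-R(\delta_i)\bigr]\,\rd\pi_i=:\Delta_i
\]
reduces everything to constructing $h_i$ with $\Delta_i\to0$.

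The second step is the risk-difference bound. Following the argument of MS21 (and, ultimately, Brown--Hwang): since $\delta_*$ is generalized Bayes against $\pi_*$ and $\delta_i$ is Bayes against $h_i^2\pi_*$, one expands $\Delta_i$ by integration by parts — a Stein-type identity for the normal component $X$ together with the corresponding chi-square identity for $S$ — so that the cross terms telescope and one is left with an upper bound of the schematic form
\[
\Delta_i\le C\int\mathcal{E}(h_i)\,\rd\pi_*,
\]
where $\mathcal{E}(h_i)$ is a quadratic form in the gradient of $h_i$ in the natural coordinates $(\theta,t)$ with $t=\log\eta$ (in which the $\eta$-part of $\pi_*$ from \eqref{Prior.1} is flat Lebesgue measure). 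Establishing this inequality rigorously in the present improper-$\pi(g)$ setting is where control of the marginal density $m_{\pi_*}(x,s)$ enters, both to justify the Fubini/integration-by-parts manipulations and to guarantee the relevant integrals are finite; these steps do not follow verbatim from MS21, where $\pi(g)$ was proper.

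The third step is the choice of $h_i$ and the energy estimate. In the $t=\log\eta$ direction the prior is one-dimensional and flat, so a piecewise-linear cutoff $h_i(t)=(1-|t|/i)_+$ contributes energy $O(1/i)$. The delicate direction is $\|\theta\|\to\infty$. Since $\pi(g)\sim g^{-a-2}$ as $g\to\infty$, integrating out $g$ in \eqref{Prior} shows the conditional prior density of $\theta$ behaves like $\eta^{a+1}\|\theta\|^{-(p+2a+2)}$ for large $\|\theta\|$ (the exponent is finite exactly because $p/2+a+1>0$), so the relevant finiteness requirement for a logarithmic cutoff is
\[
\int_{\|\theta\|>1}\|\theta\|^{-(p+2a+2)}\,\frac{\rd\theta}{\|\theta\|^2(\log\|\theta\|)^2}=\int_1^\infty\frac{\rd r}{r^{2a+5}(\log r)^2}<\infty,
\]
which holds precisely for $a\ge-2$, and at the boundary $a=-2$ additionally needs $p\ge3$ — exactly matching \eqref{our.a.b}–\eqref{our.A.B} and the well-definedness constraint. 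We therefore take $h_i$ to equal $1$ for $\|\theta\|\le\sqrt{R}$ (no decay is needed near $\theta=0$), to decay like $\log(R/\|\theta\|)/\log R$ on the shell $\sqrt{R}<\|\theta\|<R$ with $R=R(i)\to\infty$, and to be multiplied by the linear cutoff in $\log\eta$ above; the product's energy then tends to $0$.

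The main obstacle is the boundary case $a=-2$ (CASE II). There the energy integral above is only marginally convergent, so the contribution of the shell must be estimated sharply: one needs precise two-sided control of $m_{\pi_*}(x,s)$ — in particular its tail behaviour in $\|x\|$ and in $s$, tracked uniformly — and the logarithmic rate in $R(i)$ must be tuned so that $\Delta_i\to0$ rather than merely remaining bounded, which is also where the restriction $b\ge0$ is used. The remaining work is mostly bookkeeping: redoing the finiteness, continuity, and interchange-of-integration facts of MS21 for improper $\pi(g)$ under \eqref{our.a.b} and \eqref{our.A.B}, and combining the $\theta$- and $\log\eta$-cutoffs so their joint energy estimate closes.
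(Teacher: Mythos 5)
Your overall strategy (Blyth's method with a two\,-parameter sequence of proper priors) matches the paper's, but the construction you propose differs in a way that creates a genuine gap. The paper never truncates $\theta$ directly: it truncates the hyperparameter $g$ of the hierarchical prior via $k_j(g)$ in \eqref{EQ:k_j} and the scale via $h_i(\eta)$ in \eqref{EQ:h_i}, so that $\pi_{ij}$ stays inside the conjugate family, $\delta_{ij}$ retains the closed form \eqref{EQ:Bayes_estimator}, and the risk difference can be bounded term by term through \eqref{Apiij} and Lemma \ref{LEM:integral.parts.a=2}. Your cutoff acts on $\theta$ itself over the shell $\sqrt{R}<\|\theta\|<R$, and the cross terms of the resulting product energy are where the plan breaks at $a=-2$. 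There the conditional prior density of $\theta$ behaves like $\eta\|\theta\|^{2-p}$, so the prior mass captured inside the cutoff region grows like $R^{2}$ (polynomially), not logarithmically; the cross term ``($\log\eta$\,-cutoff energy) $\times$ ($\theta$\,-mass)'' is of order $R^{2}/i$, forcing $i\gg R^{2}$, while the cross term ``($\log\eta$\,-mass) $\times$ ($\theta$\,-cutoff energy)'' is of order $i/\log R$, forcing $i\ll\log R$. These requirements are incompatible, so the joint energy of your product cutoff cannot be driven to zero -- this is not bookkeeping but the central obstruction. The paper's $g$\,-truncation avoids it because the term multiplying $h_ih_i'$ in Lemma \ref{LEM:integral.parts.a=2} carries a $(g+1)^{-1}$ weight, which reduces the relevant ``mass'' to $\int\pi(g)k_j^2(g)(g+1)^{-1}\rd g\leq 2\log(1+j)$, yielding the two reciprocal bounds $4\log(1+j)/i$ and $10\,i/\log(1+j)$ in \eqref{product.bc.0}.

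A second, related gap: even with the paper's construction the energy bound does \emph{not} tend to zero -- by the AM--GM inequality the sum of the two reciprocal cross terms is bounded below -- and the paper never claims it does. Instead, with the coupling $i=\log(1+j)$ it establishes only that $\iint\|x\|^2s^{n/2-1}\{4\mathcal{B}+4\mathcal{C}+\mathcal{D}\}\rd x\rd s$ is uniformly bounded along the sequence, and then obtains $\Delta_{ij}\to0$ from the dominated convergence theorem applied to the pointwise-vanishing integrand $\|\delta_*-\delta_{ij}\|^2m(\eta\pi_{ij})$. Your plan explicitly requires ``tuning the logarithmic rate so that $\Delta_i\to0$ rather than merely remaining bounded,'' i.e.\ it relies on the energy upper bound itself vanishing, which the reciprocal structure rules out. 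Finally, the Stein-type integration-by-parts inequality $\Delta_i\leq C\int\mathcal{E}(h_i)\rd\pi_*$ is asserted rather than derived; in this doubly improper setting the paper replaces it with the exact identity \eqref{DELTA} plus the explicit decomposition into $\mathcal{A}_1,\mathcal{A}_2,\mathcal{A}_3$ (Case I) and $\mathcal{B},\mathcal{C},\mathcal{D}$ (Case II), and that substitute is load-bearing, not cosmetic.
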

The authors, in \cite{Maruyama-Strawderman-2017}, have earlier shown that such conditional priors with $a < -2$ lead to inadmissible estimators.
This paper therefore completes the determination of admissibility/inadmissibility for this class of priors. 
It establishes the the boundary as $a = -2$, with admissibility holding for $a\geq -2$ and inadmissibility for $a < -2$. 
The ultimate admissibility result by this paper 
result of this paper was foreshadowed by
\cite{Maruyama-Strawderman-2020}
where admissibility of generalized Bayes estimators with $a\geq -2$
was proved 
within the class of equivariant estimators of the form $\{1-\psi(\|x\|^2/s)\}x$.

The estimator $X$, with a constant risk $p$, is minimax for all $p$.
It is admissible for $p=1, 2$ whereas it is inadmissible for $p\geq 3$.
Therefore we are mainly interested in proposing admissible minimax estimators for $ p\geq 3$.
When $p\geq 3$, the generalized Bayes estimator has been shown to be minimax 
if $-p/2-1<a\leq \xi(p,n) $ and $b=0$ (\citeauthor{Lin-Tsai-1973}, \citeyear{Lin-Tsai-1973})
and if $-p/2-1<a\leq \xi(p,n) $ and $b>0$ (\citeauthor{Maru-Straw-2005}, \citeyear{Maru-Straw-2005}),
where
\begin{equation*}
\xi(p,n)=-2+\frac{(p-2)(n+2)}{2(2p+n-2)}.
\end{equation*}
Hence we have a following result.
\begin{thm}\label{thm.main.minimax.intro}
Assume $p\geq 3$ and $n\geq 2$. Then the generalized Bayes estimator under $\pi_*$ is minimax and admissible among the class of all estimators if
\begin{align*}
 -2\leq a\leq \xi(p,n)\text{ and }b\geq 0.
\end{align*}
\end{thm}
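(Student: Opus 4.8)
The plan is to obtain the two assertions separately over the region $\{(a,b): -2\le a\le \xi(p,n),\ b\ge 0\}$ (with $p\ge 3$, $n\ge 2$) and then intersect them. Minimaxity will come from the already cited work of \cite{Lin-Tsai-1973} and \cite{Maru-Straw-2005}, and admissibility will be pieced together from Theorem \ref{thm.main.intro} of the present paper (for $a\in[-2,-1]$) together with the MS21 admissibility result \cite{Maruyama-Strawderman-2020-arxiv} (for $a\in(-1,n/2)$). The only new content is verifying that the hypotheses of those results hold throughout the stated region, the key point being an elementary bound on $\xi(p,n)$.

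For minimaxity, since $p\ge 3$ we have $-p/2-1\le -5/2<-2\le a$, so the hypothesis $-2\le a\le \xi(p,n)$ already implies $-p/2-1<a\le \xi(p,n)$. Together with $b\ge 0$, this places us inside the range treated by \cite{Lin-Tsai-1973} when $b=0$ and inside the range treated by \cite{Maru-Straw-2005} when $b>0$; in either case $\delta_*$ is minimax.

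For admissibility, I would split $a\in[-2,\xi(p,n)]$ at the points $-2$ and $-1$. The value $a=-2$ with $b\ge 0$ and $p\ge 3$ is precisely CASE II in \eqref{our.A.B}, so Theorem \ref{thm.main.intro} applies. For $-2<a\le -1$, note that $p\ge 3$ gives $p/2+1\ge 5/2>2$, hence $\max(-p/2-1,-2)=-2$, so this range together with $b\ge 0>-1$ is exactly CASE I in \eqref{our.a.b}, and Theorem \ref{thm.main.intro} again applies. Finally, for $-1<a\le \xi(p,n)$ (a range that is non-empty only for suitably large $p,n$, and otherwise vacuous), I would invoke the MS21 result, which gives admissibility for $-1<a<n/2$, $b>-1$; since $b\ge 0>-1$, it remains only to check $\xi(p,n)<n/2$, and indeed
\[
 \xi(p,n)=-2+\frac{(p-2)(n+2)}{2(2p+n-2)}\le -2+\frac{n+2}{2}=\frac{n-2}{2}<\frac{n}{2},
\]
where the first inequality is just $p-2\le 2p+n-2$, i.e.\ $p+n\ge 0$. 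These three pieces exhaust $[-2,\xi(p,n)]$, so $\delta_*$ is admissible on the whole region; combined with the previous paragraph it is both minimax and admissible there.

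Since essentially every step is a citation lookup, there is no serious analytic obstacle; the only thing requiring care is the bookkeeping that ensures the sub-ranges of $a$ together cover $[-2,\xi(p,n)]$ and that each sits inside the hypotheses of the result being cited, the single nontrivial inequality being $\xi(p,n)<n/2$ displayed above. One should also remark in passing that the standing conditions $p/2+a+1>0$ and $b+1>0$ needed for $\delta_*$ to be well defined hold automatically here, since $p\ge 3$ and $a\ge -2$ give $p/2+a+1\ge 1/2>0$, and $b\ge 0$.
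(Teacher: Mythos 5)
Your proposal is correct and follows essentially the same route as the paper, which likewise obtains Theorem \ref{thm.main.minimax.intro} by intersecting the minimaxity ranges of \cite{Lin-Tsai-1973} and \cite{Maru-Straw-2005} with the admissibility ranges from Theorem \ref{thm.main.intro} (CASES I and II) and from \cite{Maruyama-Strawderman-2020-arxiv} for $a>-1$. Your explicit bookkeeping (the case split at $a=-2$ and $a=-1$, the bound $\xi(p,n)<n/2$, and the check of the standing conditions) only makes explicit what the paper leaves implicit.
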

Figure \ref{fig1} presents a summary of admissibility/inadmissibility and minimaxity results 
for the unknown scale case for this class of priors.
\begin{remark}
\cite{Brown-1971} largely settled the issue of admissibility of generalized Bayes estimators of
$\mu$ in the known scale case, $X\sim N_p(\mu,I_p)$ with no $S$, under quadratic loss $\|d-\mu\|^2$.
Studies involving admissibility and minimaxity in the known scale case largely focused on priors with the hierarchical structure
\begin{equation}\label{prior.known}
\int\frac{1}{(2\pi)^{p/2}g^{p/2}}\exp\left(-\frac{1}{2g}\|\mu\|^2\right) \pi(g)\rd g
%
\end{equation}
with $\pi(g)$ given by \eqref{Prior.1} for $a>-p/2-1$ and $b>-1$. 
The key results under the prior above 
in the known scale case are summarized as follows and in Figure \ref{fig1}.
The estimator is inadmissible if $-p/2-1<a< -2$ and $b>-1$ (\citeauthor{Brown-1971}, \citeyear{Brown-1971}) and is admissible if $a\geq -2$ and $b>-1$ (\citeauthor{Brown-1971}, \citeyear{Brown-1971}). 
Furthermore the estimator is minimax if 
$ -1<a\leq p/2-3$ and $b=0$ (\citeauthor{Strawderman-1971}, \citeyear{Strawderman-1971}),
if $-p/2-1<a\leq -1 $ and $b=0$ (\citeauthor{Berger-1976}, \citeyear{Berger-1976})
and if $-p/2-1<a \leq p/2-3$ and $b>0$ (\citeauthor{Faith-1978}, \citeyear{Faith-1978}).
\end{remark}
\begin{figure}
\centering
\includegraphics[scale=0.67,trim=25 390 0 200]{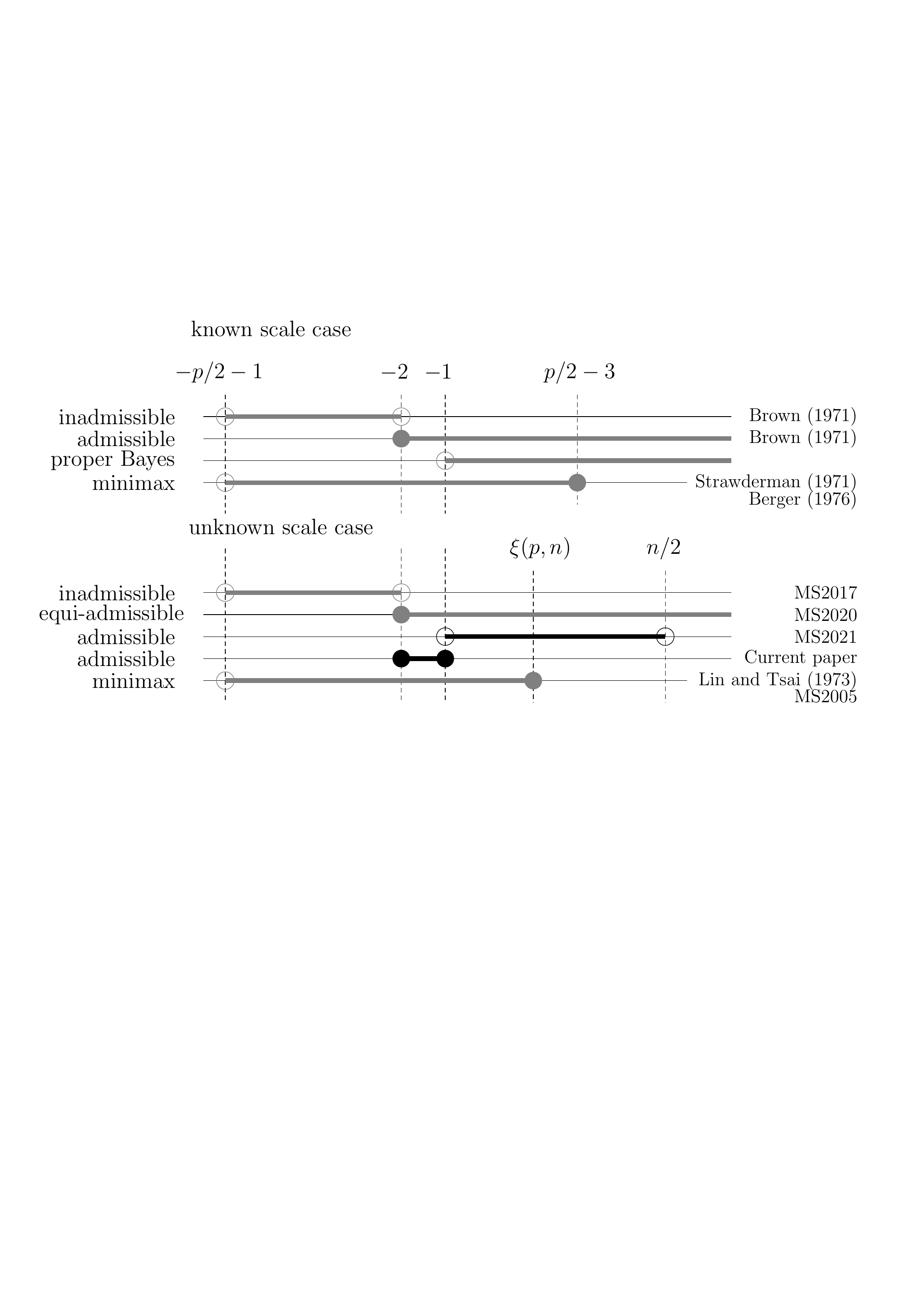}
\caption{Ranges of $a$ for admissibility/inadmissibility and minimaxity}
\label{fig1}
\end{figure}

We investigate admissibility using a version of Blyth's method closely related to the approach of \cite{Brown-Hwang-1982}.
To this end we construct a sequence of proper priors $\pi_{ij}(\theta,\eta)$
converging to $\pi_*$ of the form
\begin{equation}\label{our.seq}
\pi_{ij}(\theta,\eta)=\frac{h_i^2(\eta)}{\eta}\int\frac{\eta^{p/2}}{(2\pi)^{p/2}g^{p/2}}\exp\left(-\frac{\eta}{2g}\|\theta\|^2\right) \pi(g)k_j^2(g)\rd g,
\end{equation}
with the hierarchical structure
\begin{equation}\label{our.seq.1}
  \theta\mid \{g,\eta\} \sim N_p(0,(g/\eta) I),\quad g \sim \pi(g)k^2_j(g), 
\quad \eta \sim \frac{h_i^2(\eta)}{\eta}.
\end{equation}
In \eqref{our.seq} and \eqref{our.seq.1}, $h_i(\eta)$ and $k_j(g)$ are given as follows,
\begin{equation}\label{EQ:h_i}
 h_i(\eta)=\frac{i}{i+|\log\eta|},
\end{equation}
\begin{equation}\label{EQ:k_j}
 k_j( g )=1-\frac{\log( g +1)}{\log( g +1+j)}.
\end{equation}
Properties of $h_i(\eta)$ and $k_j(g)$ will be provided in Lemmas \ref{lem.hi} and \ref{lem:k_j}.
In particular, we emphasize that
$h_i^2(\eta)/\eta$ and $\pi(g)k^2_j( g )$ are both proper by
part \ref{lem.hi.02} of Lemma \ref{lem.hi} and part \ref{lem:k_j.2.5} of Lemma \ref{lem:k_j}, respectively.
Also we note that 
eventually, for large $i$ and $j$ we set
\begin{equation}\label{EQ:ilogj}
 i=\log(1+j)
\end{equation}
which is crucial in the proof of admissibility.

%

Before proceeding with the development of the main result, we make
several remarks relating the current paper to earlier developments.
\begin{remark}
\cite{Maruyama-Strawderman-2020-arxiv} considered the joint improper prior given by
\begin{align}\label{Prior.intro.0}
\underbrace{\frac{1}{\eta}}_{\text{improper}}
\times\underbrace{\int\frac{\eta^{p/2}}{(2\pi)^{p/2}g^{p/2}}\exp\left(-\frac{\eta}{2g}\|\theta\|^2\right) \pi(g)\rd g}_{\text{proper}}
\end{align}
where $\pi(g)$ is proper, or $a>-1$, 
and 
proposed a class of admissible generalized Bayes estimators for any dimension $p$. 
Since the Stein phenomenon occurs for $p\geq 3$, we are interested in proposing admissible minimax estimators for $p\geq 3$.
As in Figure \ref{fig1}, the intersection of the minimaxity region $-p/2-1<a\leq \xi(p,n) $ and the admissibility region $a>-1$
was non-empty when $n\geq 3$ and $ p>4n/(n-2)$. 
Hence in particular no admissible minimax estimator had been found for $p=3,4$.
Theorem \ref{thm.main.minimax.intro} provides such estimators in this case for $n\geq 2$ 
and hence help fill this void, albeit with priors which are improper in $\theta$
conditional on $\eta$, since the generalized prior on $g$ is improper in these cases. 
Thus, not only have we extended the class of admissible estimators in all dimensions, 
but we provide admissible minimax estimators for dimensions $p=3$ and $4$, 
where no such estimators were previously known.
\end{remark}

\begin{remark}\label{rem:interesting}
In Theorem \ref{thm.main.minimax.intro}, two interesting cases seem deserving of attention. When $a=-2$ and $b=0$,
the prior corresponds to the joint Stein prior 
\begin{align*}
 \frac{1}{\eta}\times \eta^{p/2}\left\{\eta\|\theta\|^2\right\}^{1-p/2}=\|\theta\|^{2-p}.
\end{align*}
That this estimator is admissible and minimax follows from Theorem \ref{thm.main.minimax.intro}.
Additionally, \cite{Kubokawa-1991} established that this estimator dominates
the \cite{James-Stein-1961} estimator
\begin{equation*}
 \left(1-\frac{(p-2)/(n+2)}{\|x\|^2/s}\right)x.
\end{equation*}
Another interesting case is a variant of the James-Stein of the simple form 
\begin{equation*}
 \left(1-\frac{(p-2)/(n+2)}{\|x\|^2/s+(p-2)/(n+2)+1}\right)x,
\end{equation*}
which is generalized Bayes corresponding to $a=-2$ and $b=n/2$, that is,
\begin{equation*}
\pi(g)=\left(\frac{g}{g+1}\right)^{n/2}.
\end{equation*}
This estimator is also admissible and minimax. See Section 4.1 of \cite{Maruyama-Strawderman-2020}
and Section 3 of \cite{Maruyama-Strawderman-2020-arxiv} for details.
\end{remark}


\begin{remark}
The proof of admissibility in this paper is closely related to and greatly influenced by those in \cite{Brown-Hwang-1982} (BH) and \cite{James-Stein-1961} (JS). 
It is also similar to that in \cite{Maruyama-Strawderman-2020-arxiv} (MS). 
It seems worthwhile to comment on some of the technical differences.
In MS the (conditional on $\eta$) priors on $\theta$ were proper and only the prior on $\eta$ was improper. 
Hence only a proper sequence of priors on the (inverse) scale, $\eta$, was required. 
In both BH and JS the variance is assumed known (for the normal case in BH) 
and hence the required sequence of proper priors is needed only on $\theta$, 
whereas we need to find proper sequences on both $\theta$, and the (inverse) scale, $\eta$.
In this paper, since the (conditional on $\eta$) priors on $\theta$ are also improper, 
the joint improper prior is given by
\begin{align}\label{Prior.intro.00}
\underbrace{\frac{1}{\eta}}_{\text{improper}}
\times\underbrace{\int\frac{\eta^{p/2}}{(2\pi)^{p/2}g^{p/2}}\exp\left(-\frac{\eta}{2g}\|\theta\|^2\right) \pi(g)\rd g}_{\text{improper}}
\end{align}
where $\pi(g)$ is improper. As we mentioned, for the invariant prior $ 1/\eta$,
we use the sequence of proper priors, $h_i^2(\eta)/\eta$, which was investigated in MS.

Our sequence of proper priors on $g$,
$\pi(g)k_j^2(g)$, with $k_j(g)$ given by \eqref{EQ:k_j}, may be viewed as a modified version of that in BH and in JS. 
In particular the sequence in BH is
\begin{align*}
 k^{\mathrm{BH}}_j(g)=
\begin{cases}
 1 & 0\leq g\leq 1 \\
 1-\log g/\log (j+1) & 1<g<j+1 \\
0 & g\geq j+1.
\end{cases}
\end{align*}
BH proposes a sufficient condition for admissibility of generalized Bayes estimator with two components, which they referred to as an ``asymptotic flatness'' condition and ``growth'' condition. 
Because our proof is similar in spirit to that of BH, there are some connections in terms of these two conditions between BH and this paper. 
We focus here on the ``growth'' condition. 
In section \ref{sec:CASEII}, the condition for integrability 
(in order to invoke the dominated convergence theorem) in \eqref{product.b} and \eqref{product.c}
\begin{align}
\sup_{i,j\in\mathbb{N}} \left\{\int \eta\{h'_i(\eta)\}^2\rd\eta \int \frac{\pi(g)k^2_j(g)}{g+1}\rd g\right\}<\infty \label{intro.growth.1}\\
\sup_{i,j\in\mathbb{N}}\left\{\int \frac{h_i^2(\eta)}{\eta}\rd\eta \int (g+1)\pi(g)\{k'_j(g)\}^2\rd g\right\} <\infty\label{intro.growth.2}
\end{align}
for all $i,j\in\mathbb{N}$ may be regarded as a ``growth condition''.
Of the four integrals appearing in \eqref{intro.growth.1} and \eqref{intro.growth.2}, only the integral
\begin{align*}
\int (g+1)\pi(g)\{k'_j(g)\}^2\rd g <\infty
\end{align*} 
appeared in \cite{Brown-Hwang-1982}. They bounded $(\rd/\rd g)k^{\mathrm{BH}}_j(g)$ as
\begin{align*}
\left| (\rd/\rd g)k^{\mathrm{BH}}_j(g)\right|\leq
\begin{cases}
 0 & g\leq 1 \\
1/(g\log 2) & 1<g\leq 2 \\
1/(g\log g) & g> 2,
\end{cases}
\end{align*}
for all $j\in\mathbb{N}$ and concluded that
\begin{align*}
\sup_{j\in\mathbb{N}}\int (g+1)\pi(g)\{(\rd/\rd g)k^{\mathrm{BH}}_j(g)\}^2\rd g <\infty.
\end{align*} 
However Lemmas \ref{lem.hi} gives
\begin{align*}
 \int \frac{h_i^2(\eta)}{\eta}\rd\eta=2i
\end{align*}
which does not imply the finiteness of \eqref{intro.growth.2}.

In this paper, we very carefully bound the integrals in \eqref{intro.growth.1} and 
\eqref{intro.growth.2} from above.
By Lemmas \ref{lem.hi} and \ref{lem:k_j} as well as $\pi(g)\leq 1$ under CASE II, we have
\begin{equation}\label{product.bc}
 \begin{split}
 \int \eta\{h'_i(\eta)\}^2\rd\eta \int \frac{\pi(g)k^2_j(g)}{g+1}\rd g< 4\frac{\log(1+j)}{i}, \\
\int \frac{h_i^2(\eta)}{\eta}\rd\eta \int (g+1)\pi(g)\{k'_j(g)\}^2\rd g <10\frac{i}{\log(1+j)}.
\end{split}
\end{equation}
With the choice $i=\log(1+j)$, the finiteness given by \eqref{intro.growth.1} and \eqref{intro.growth.2} follows.
\end{remark}

The organization of this paper is as follows. 
Section \ref{sec:expression} is denoted to developing expressions for Bayes estimators and the risk
differences which are used to prove Theorem \ref{thm.main.intro}.
Sections \ref{sec:CASEI} and \ref{sec:CASEII} are  devoted to the proof of Theorem \ref{thm.main.intro} for CASES I and II given by
\eqref{our.a.b} and \eqref{our.A.B}, respectively.
many of the proofs of technical lemmas are given in Appendix.

\section{The form of Bayes estimators and risk differences}
\label{sec:expression}
In this section we develop expressions for Bayes estimators and the risk
differences which are used to prove Theorem \ref{thm.main.intro}.
Let
\begin{equation}\label{MM}
m(\psi(\theta,\eta)) 
=\iint \psi(\theta,\eta)f(x\mid\theta,\eta)f(s\mid\eta)\rd \theta\rd \eta .
\end{equation}
Then, under the loss \eqref{loss},
the generalized Bayes estimator under the improper $\pi_*(\theta,\eta)$ is 
\begin{equation}\label{GB}
\delta_*= \frac{m(\eta\theta\pi_*(\theta,\eta))}{m(\eta\pi_*(\theta,\eta))},
\end{equation}
and the proper Bayes estimator under the proper $\pi_{ij}(\theta,\eta)$ is 
\begin{equation}\label{PB.ij}
\delta_{ij}= \frac{m(\eta\theta\pi_{ij}(\theta,\eta))}{m(\eta\pi_{ij}(\theta,\eta))}.
\end{equation}
The Bayes risk difference under $\pi_{ij}$ given by
\begin{align}\label{bill.0}
\Delta_{ij} =
\int_{\mbR^p}\int_0^\infty 
 \left\{E\left(\eta\|\delta_*-\theta\|^2\right)-E\left(\eta\|\delta_{ij}-\theta\|^2\right) \right\}
\pi_{ij}(\theta,\eta)
 \rd  \theta \rd \eta
\end{align}
may be re-expressed as
\begin{equation}\label{DELTA}
 \Delta_{ij}=\int_{\mbR^p}\int_0^\infty 
\| \delta_*-\delta_{ij}\|^2 m(\eta\pi_{ij}(\theta,\eta))\rd x\rd s.
\end{equation}

The basic structure of the proof is standard, as in \cite{Brown-Hwang-1982}, 
and is based on the method of \cite{Blyth-1951}.
The following form of Blyth's sufficient condition shows that 
$ \lim_{i,j\to\infty}\Delta_{ij} =0$ 
implies admissibility. 
\begin{lemma}\label{lem.Blyth}
Suppose $ \pi_{ij}(\theta,\eta)$ is an increasing (in $i$ and $j$) sequence of proper priors, 
$\lim_{i,j\to\infty}\pi_{ij}(\theta,\eta)=\pi(\theta,\eta)$ and 
$\pi_{ij}(\theta,\eta)>0$ for all $\theta$ and $\eta$. Then 
$\delta_*$ is admissible if $\Delta_{ij}$ satisfies 
\begin{equation}\label{bill.1}
 \lim_{i,j\to\infty}\Delta_{ij} =0.
\end{equation}
\end{lemma}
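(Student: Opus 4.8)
The plan is to run the classical contradiction argument underlying Blyth's method \cite{Blyth-1951}, in the form used by \cite{Brown-Hwang-1982}. Write $R(\delta;\theta,\eta)=E\left(\eta\|\delta(x,s)-\theta\|^2\right)$ for the risk of an estimator $\delta$ under the loss \eqref{loss}, so that by \eqref{bill.0} the quantity $\Delta_{ij}$ is the difference between the integrated ($\pi_{ij}$-)risks of $\delta_*$ and of the proper Bayes rule $\delta_{ij}$ of \eqref{PB.ij} (this difference being unambiguous, since the $\pi_{ij}$-integrated risk of $\delta_*$ is finite, and in any case \eqref{DELTA} exhibits $\Delta_{ij}\ge 0$). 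Suppose, for contradiction, that $\delta_*$ is \emph{not} admissible. Then there is an estimator $\delta'$ with $R(\delta';\theta,\eta)\le R(\delta_*;\theta,\eta)$ for every $(\theta,\eta)$ and strict inequality at some $(\theta_0,\eta_0)$; since $R(\delta_*;\cdot)$ is finite everywhere, $\delta'$ also has finite risk everywhere. Because $(X,S)$ follows the exponential family \eqref{our.density}, the risk of any estimator of finite risk is continuous in $(\theta,\eta)$, so $R(\delta_*;\cdot)-R(\delta';\cdot)$ is continuous and strictly positive at $(\theta_0,\eta_0)$; hence there are an open ball $B\subset\mbR^p\times(0,\infty)$ of positive Lebesgue measure and an $\varepsilon>0$ with
\[
R(\delta_*;\theta,\eta)-R(\delta';\theta,\eta)\ge\varepsilon\qquad\text{on }B .
\]

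Next I would use that $\delta_{ij}$, being the proper Bayes estimator for $\pi_{ij}$ under \eqref{loss} (the posterior mean, which minimizes posterior expected loss pointwise in $(x,s)$), minimizes $\iint R(\delta;\theta,\eta)\,\pi_{ij}(\theta,\eta)\rd\theta\rd\eta$ over all estimators $\delta$; in particular its integrated risk does not exceed that of $\delta'$. Combined with \eqref{bill.0} this yields
\[
\Delta_{ij}\;\ge\;\int_{\mbR^p}\int_0^\infty\bigl(R(\delta_*;\theta,\eta)-R(\delta';\theta,\eta)\bigr)\,\pi_{ij}(\theta,\eta)\rd\eta\rd\theta ,
\]
where the right-hand side is well defined because $\delta'$ dominates $\delta_*$, so the integrand is nonnegative. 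Discarding the integrand outside $B$ and inserting the uniform bound above gives
\[
\Delta_{ij}\;\ge\;\varepsilon\int_B\pi_{ij}(\theta,\eta)\rd\eta\rd\theta .
\]

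Finally, the two remaining hypotheses on the sequence close the argument. Since $\pi_{ij}$ is increasing in $i$ and $j$ we have $\pi_{ij}\ge\pi_{11}>0$ on $B$, and since $B$ has positive Lebesgue measure, $c:=\int_B\pi_{11}(\theta,\eta)\rd\eta\rd\theta>0$, so $\int_B\pi_{ij}\ge c$ for all $i,j$. Therefore $\Delta_{ij}\ge\varepsilon c>0$ for every $i$ and $j$, contradicting the hypothesis \eqref{bill.1} that $\Delta_{ij}\to 0$. Hence $\delta_*$ is admissible.

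I expect the only genuinely delicate step to be the first one: converting ``$\delta'$ improves on $\delta_*$ strictly at a single point'' into ``$\delta'$ improves on $\delta_*$ by a fixed $\varepsilon$ on a set of positive measure.'' This rests on the finiteness of $R(\delta_*;\cdot)$ (so that any dominating $\delta'$ automatically has finite, hence continuous, risk) together with continuity of risk functions, which is a standard feature of the exponential family in \eqref{our.density}. Once this local uniform improvement is secured, the rest is the routine Blyth chain of inequalities, with ``$\pi_{ij}$ increasing'' and ``$\pi_{ij}>0$'' used precisely to keep $\int_B\pi_{ij}$ bounded away from $0$ uniformly in $(i,j)$; the substantive work of the paper lies entirely in verifying the hypothesis \eqref{bill.1} for the specific sequence \eqref{our.seq}.
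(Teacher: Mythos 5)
Your argument is the classical Blyth contradiction argument, and it is correct; the paper in fact states Lemma \ref{lem.Blyth} without proof, treating it as the standard sufficient condition imported from \cite{Blyth-1951} and \cite{Brown-Hwang-1982}, so there is nothing to compare against beyond noting that your reconstruction is the intended one. The only point worth flagging is that your proof uses the everywhere-finiteness of $R(\delta_*;\theta,\eta)$ (to get continuity of the risk difference and to make the chain of integrated-risk inequalities legitimate), which is not among the stated hypotheses of the lemma; in the paper's setting it holds automatically, since \eqref{phi**} shows $0\leq\phi_*(w,s)/w\leq 1$, hence $\|\delta_*\|\leq\|x\|$ and $\eta E\|\delta_*-\theta\|^2\leq 2\eta E\|X\|^2+2\eta\|\theta\|^2<\infty$. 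Your observation that the hypothesis $\lim_{i,j\to\infty}\pi_{ij}=\pi$ is not actually used in the lemma's proof (only monotonicity and strict positivity are needed to bound $\int_B\pi_{ij}$ away from zero) is also accurate and consistent with how such Blyth-type lemmas are usually deployed.
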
 
Both 
Lemma \ref{lem.hi} and 
Lemma \ref{lem:k_j}
guarantee that $\pi_{ij}(\theta,\eta) $ given by \eqref{our.seq}
satisfies the assumptions of Lemma \ref{lem.Blyth} as follows.
\begin{lemma}\label{lem.hikj}
The prior $\pi_{ij}(\theta,\eta) $ given by \eqref{our.seq}
is increasing in $i$ and $j$ and integrable for all fixed $i$ and $j$. 
Further $\lim_{i,j\to\infty}\pi_{ij}(\theta,\eta)=\pi_*(\theta,\eta)$ and 
$\pi_{ij}(\theta,\eta)>0$ for all $\theta$ and $\eta$.
\end{lemma}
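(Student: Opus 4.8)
The plan is to verify the four assertions of Lemma~\ref{lem.hikj} one at a time, exploiting the fact that in \eqref{our.seq} the index $i$ enters only through the nonnegative factor $h_i^2(\eta)/\eta$ while $j$ enters only through the mixing density $\pi(g)k_j^2(g)$ inside the integral; since everything in sight is nonnegative, the bookkeeping is light.

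First I would record the monotonicity of the two building blocks. Writing $h_i(\eta)=(1+|\log\eta|/i)^{-1}$ makes it plain that for each fixed $\eta>0$ the map $i\mapsto h_i^2(\eta)$ is nondecreasing with supremum $1$. In $k_j(g)=1-\log(g+1)/\log(g+1+j)$ the numerator $\log(g+1)\ge 0$ is fixed while the denominator $\log(g+1+j)$ increases in $j$, so $j\mapsto k_j(g)$ is nondecreasing and takes values in $[0,1]$; hence $j\mapsto k_j^2(g)$ is nondecreasing. Since the Gaussian kernel and $\pi(g)$ are nonnegative, the inner integral in \eqref{our.seq} is nondecreasing in $j$, and multiplying by $h_i^2(\eta)/\eta\ge 0$, which is nondecreasing in $i$, shows $\pi_{ij}(\theta,\eta)$ is nondecreasing in both indices. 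Strict positivity is equally immediate: for $\eta>0$ the prefactor $h_i^2(\eta)/\eta$ is strictly positive, and for $g>0$ the Gaussian kernel is strictly positive, $\pi(g)>0$ under the running assumptions $p/2+a+1>0$, $b+1>0$, and $k_j^2(g)>0$ since $\log(g+1)<\log(g+1+j)$; thus the inner integral, and so $\pi_{ij}(\theta,\eta)$, is strictly positive.

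For properness, because the integrand is nonnegative I would invoke Tonelli's theorem and integrate in $\theta$ first; since $\theta\mid\{g,\eta\}\sim N_p(0,(g/\eta)I)$, the $\theta$-integral of the Gaussian kernel is $1$, so the remaining integrals factor,
\[
\iint_{\mathbb{R}^p\times(0,\infty)}\pi_{ij}(\theta,\eta)\,\rd\theta\,\rd\eta
=\left(\int_0^\infty\frac{h_i^2(\eta)}{\eta}\,\rd\eta\right)\left(\int_0^\infty\pi(g)k_j^2(g)\,\rd g\right),
\]
and both factors are finite for fixed $i,j$: the first equals $2i$ by part~\ref{lem.hi.02} of Lemma~\ref{lem.hi}, and the second is finite by part~\ref{lem:k_j.2.5} of Lemma~\ref{lem:k_j}. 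For the pointwise limit, $h_i^2(\eta)\to 1$ as $i\to\infty$ for each $\eta>0$, while $k_j(g)\to 1$ as $j\to\infty$ for each $g>0$ because $\log(g+1+j)\to\infty$, so $\pi(g)k_j^2(g)\uparrow\pi(g)$; the monotone convergence theorem then gives that the inner integral in \eqref{our.seq} increases to $\int\frac{\eta^{p/2}}{(2\pi)^{p/2}g^{p/2}}\exp\left(-\frac{\eta}{2g}\|\theta\|^2\right)\pi(g)\,\rd g$, whence $\pi_{ij}(\theta,\eta)\to\pi_*(\theta,\eta)$ with $\pi_*$ as in \eqref{Prior} (the convergence holding in the extended sense at the measure-zero set, such as $\theta=0$, where the limit may be $+\infty$, which causes no difficulty in Lemma~\ref{lem.Blyth}).

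There is no genuine obstacle here; the only place requiring a little care is the interchange of limit and integral in the last step, but that is precisely monotone convergence, which is why it is natural to settle the monotonicity of $j\mapsto\pi(g)k_j^2(g)$ before passing to the limit.
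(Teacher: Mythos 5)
Your proof is correct and follows essentially the same route as the paper, which simply appeals to part~\ref{lem.hi.01} and part~\ref{lem.hi.02} of Lemma~\ref{lem.hi} and parts~\ref{lem:k_j.1} and \ref{lem:k_j.2.5} of Lemma~\ref{lem:k_j}, together with the factorization of the $(\theta,\eta,g)$-integral after integrating out the normal kernel in $\theta$. Your additional remarks on monotone convergence and on the possible infinity of the limit at $\theta=0$ are accurate but do not change the argument.
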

Now we rewrite $ \delta_*$, $ \delta_{ij}$ and the integrand of \eqref{DELTA}.
Using the identity, 
\begin{align*}
 \|x-\theta\|^2+\frac{\|\theta\|^2}{g}=\frac{g+1}{g}\left\|\theta-\frac{g}{g+1}x\right\|^2+\frac{\|x\|^2}{g+1},
\end{align*}
we have 
\begin{align}\label{MM.1}
m(\eta\pi_{ij}) 
&=\iint \{\eta \pi_{ij}(\theta,\eta)\}f(x\mid\theta,\eta)f(s\mid\eta)\rd \theta\rd \eta \\
&= \iiint \frac{\eta^{p/2}}{(2\pi)^{p/2}}\exp\left(-\eta\frac{\|x-\theta\|^2}{2}\right)
f(s\mid\eta) \notag\\
&\quad\times 
\frac{\eta^{p/2}}{(2\pi)^{p/2}g^{p/2}}\exp\left(-\frac{\eta}{2g}\|\theta\|^2\right) h_i^2(\eta)\pi(g)k_j^2(g)\rd \theta\rd g\rd \eta \notag\\
&=\iint \frac{\eta^{p/2}f(s\mid\eta)}{(2\pi)^{p/2}(g+1)^{p/2}}\exp\left(-\frac{\eta\|x\|^2}{2(g+1)}\right)
 h_i^2(\eta)\pi(g)k_j^2(g)\rd g\rd \eta \notag\\
&=q_1(p,n)s^{n/2-1} \iint F(g,\eta ;  w,s)
h_i^2(\eta)\pi(g)k_j^2(g)\rd g\rd \eta, \notag
\end{align}
where $w=\|x\|^2/s$,
\begin{equation}\label{EQ:F}
 F(g,\eta ;  w,s)=\frac{\eta^{p/2+n/2}}{(g+1)^{p/2}}\exp\left\{-\frac{\eta s}{2}
\left(\frac{w}{g+1}+1\right)\right\},
\end{equation}
 and
\begin{align*}
 q_1(p,n)= \frac{1}{(2\pi)^{p/2}\Gamma(n/2)2^{n/2}}.
\end{align*}
Similarly we have
\begin{equation}\label{MM.2}
 \begin{split}
 m(\eta\theta\pi_{ij}) 
&=\iint \{\eta \theta\pi_{ij}(\theta,\eta)\} f(x\mid\theta,\eta)f(s\mid\eta)\rd \theta\rd \eta \\
&=q_1(p,n) s^{n/2-1} \iint \frac{gx}{g+1}F(g,\eta ;  w,s)
h_i^2(\eta)\pi(g)k_j^2(g)\rd g\rd \eta.
\end{split}
\end{equation}
By \eqref{MM.1} and \eqref{MM.2}, the Bayes estimator under $\pi_{ij}$ is
\begin{equation}\label{EQ:Bayes_estimator}
\begin{split}
 \delta_{ij}
=\frac{m(\theta\eta\pi_{ij})}{m(\eta\pi_{ij})}
=\left(1-\frac{\phi_{ij}(w,s)}{w}\right)x,
\end{split}
\end{equation}
where 
\begin{align}\label{phi.ij.original}
 \phi_{ij}(w,s)&=w\frac{\iint (g+1)^{-1}F(g,\eta;w,s)h_i^2(\eta)\pi(g)k_j^2(g)\rd g\rd \eta}
{\iint  F(g,\eta;w,s)h_i^2(\eta)\pi(g)k_j^2(g)\rd g\rd \eta} .
\end{align}
With $h_i\equiv 1$ and $k_j\equiv 1$ in \eqref{phi.ij.original}, we have
\begin{align}\label{phi.*.original}
\phi_*(w,s) =w\frac{\iint (g+1)^{-1}F(g,\eta;w,s)\pi(g)\rd g\rd \eta}
{\iint  F(g,\eta;w,s)\pi(g)\rd g\rd \eta}
\end{align}
and our target generalized Bayes estimator given by
\begin{equation}\label{EQ:GBayes_estimator}
 \delta_*=\left(1-\frac{\phi_*(w,s)}{w}\right)x.
\end{equation}
Note that
\begin{align*}
 \int F(g,\eta;w,s)\rd \eta=\frac{\Gamma(p/2+n/2+1)}{(g+1)^{p/2}}\left(\frac{2}{s\{1+w/(g+1)\}}\right)^{p/2+n/2+1}
\end{align*}
which implies 
\begin{align}\label{phi**}
 \frac{\phi_*(w,s)}{w}=\frac{\int_0^\infty (g+1)^{-p/2-1}\{1+w/(g+1)\}^{-p/2-n/2-1}\pi(g)\rd g}
{\int_0^\infty (g+1)^{-p/2}\{1+w/(g+1)\}^{-p/2-n/2-1}\pi(g)\rd g}.
\end{align}
In the following, however, we keep \eqref{phi.*.original} not \eqref{phi**} as the expression of $\phi_*(w,s)$.

By \eqref{MM.1}, \eqref{EQ:Bayes_estimator} and \eqref{EQ:GBayes_estimator}, $ \left\| \delta_*-\delta_{ij} \right\|^2 m(\eta\pi_{ij})$, in $\Delta_{ij}$, is
\begin{equation}\label{EQ.integrand}
 \begin{split}
 \left\| \delta_*-\delta_{ij} \right\|^2 m(\eta\pi_{ij}) &= 
\|x\|^2 \left(\frac{\phi_*(w,s)}{w}-\frac{\phi_{ij}(w,s)}{w}\right)^2 m(\eta\pi_{ij}) \\
&=q_1(p,n)\|x\|^2 s^{n/2-1} A(\pi;i,j),
\end{split}
\end{equation}
where
 \begin{align}
&A(\pi;i,j)\label{Apiij}\\ &=\left(
\frac{\iint (g+1)^{-1}F \pi \rd g\rd \eta}
{\iint  F\pi \rd g\rd \eta}-
\frac{\iint (g+1)^{-1}F h_i^2 \pi k_j^2\rd g\rd \eta}
{\iint  F h_i^2 \pi k_j^2 \rd g\rd \eta}
\right)^2\iint  F h_i^2 \pi k_j^2 \rd g\rd \eta.\notag
\end{align}
In Sections \ref{sec:CASEI} and \ref{sec:CASEII} respectively, 
we will complete the proof of Theorem \ref{thm.main.intro}, 
for CASE I and CASE II by the dominated convergence theorem.
We do so by showing the integrand in
$\Delta_{ij}=q_1(p,n)\iint \|x\|^2 s^{n/2-1} A(\pi;i,j)\rd x\rd s$
is bounded by an integrable function. 
It follows, since the integrand approaches $0$, that $ \Delta_{ij}\to 0$
which establishes the result.
\section{CASE I}
\label{sec:CASEI}
This section is devoted to the proof of Theorem \ref{thm.main.intro} for CASE I given by
\eqref{our.a.b}.
Applying
the inequality
\begin{equation}\label{eq.4}
\left(\sum_{i=1}^k a_i\right)^2\leq k \sum_{i=1}^k a_i^2,
\end{equation}
to \eqref{Apiij}, we have
\begin{align}\label{A.inequality}
&A(\pi;i,j) \\
&\leq 3\left\{\left(
\frac{\iint \psi(g) F\pi\rd g\rd \eta}{\iint  F\pi\rd g\rd \eta}
-\frac{\iint \psi(g) Fh_i^2\pi\rd g\rd \eta}{\iint  Fh_i^2\pi\rd g\rd \eta}
\right)^2 \iint  F h_i^2\pi k_j^2\rd g\rd \eta\right. \notag\\
& \quad\left(
\frac{\iint \psi(g) Fh_i^2\pi\rd g\rd \eta}{\iint  Fh_i^2\pi\rd g\rd \eta}
-
\frac{\iint \psi(g) Fh_i^2\pi k_j^2\rd g\rd \eta}{\iint  Fh_i^2\pi\rd g\rd \eta}
\right)^2 
\iint  F h_i^2\pi k_j^2\rd g\rd \eta \notag\\
&\quad\left.\left(
\frac{\iint \psi(g) Fh_i^2\pi k_j^2\rd g\rd \eta}{\iint  Fh_i^2\pi\rd g\rd \eta}
-\frac{\iint \psi(g) Fh_i^2\pi k_j^2\rd g\rd \eta}{\iint  Fh_i^2\pi k_j^2\rd g\rd \eta}
\right)^2 
\iint  F h_i^2\pi k_j^2\rd g\rd \eta \right\} \notag\\
&\leq 3\left\{\mcA_1(\psi;\pi;i,j)+\mcA_2(\psi;\pi;i,j)+\mcA_3(\psi;\pi;i,j)\right\}\notag
\end{align}
where $\psi(g)=1/(g+1)$ and
\begin{align*}
\mcA_1(\psi)&= 
\left\{
\iint \psi(g)\left|\frac{1}{\iint  F\pi\rd g\rd \eta}-\frac{h_i^2}{\iint  Fh_i^2\pi\rd g\rd \eta}\right|F\pi\rd g\rd \eta \right\}^2 
\iint  F h_i^2\pi \rd g\rd \eta, \\
\mcA_2(\psi) &=
\frac{\dps \left(\iint \psi(g) F h_i^2 \pi (1-k_j^2)\rd g\rd \eta\right)^2}{\iint  Fh_i^2\pi\rd g\rd \eta}, \\
\mcA_3(\psi) &=
\frac{\left(\iint \psi(g) Fh_i^2\pi k_j^2\rd g\rd \eta\right)^2}
{(\iint  Fh_i^2\pi\rd g\rd \eta)^2 \iint  Fh_i^2\pi k_j^2\rd g\rd \eta} 
\left( \iint F h_i^2 \pi \rd g\rd \eta - \iint F h_i^2\pi k^2_j \rd g\rd \eta\right)^2.
\end{align*}
Since $\mcA_i$ for $i=1,2,3$ with some $\psi$ will also appear in Section \ref{sec:CASEII} (more precisely in Lemma \ref{thm:bcd}),
we summarize useful properties in the following Lemma.
\begin{lemma}\label{thm:a1a2a3}
Let
\begin{equation}
 \psi(g)=\frac{1}{(g+1)^{\alpha}} \text{ for }\alpha=1\text{ or }2. 
\end{equation}
Let $m$ satisfy $a+m+2\alpha>0$.
Also let $\epsilon$ satisfy
\begin{align*}
0< \epsilon<\frac{1}{2}\min\left(p/2+a+1,a+m+2\alpha,1\right).
\end{align*}
Then there exist positive constants, $\tilde{\mcA}_1$, $\tilde{\mcA}_2$, and 
$\tilde{\mcA}_3$, independent of $i$ and $j$, such that
\begin{align}
& \iint \frac{\|x\|^2s^{n/2-1}}{(\|x\|^2/s)^m} \mcA_1(\psi;\pi;i,j)\rd x\rd s \leq 
\tilde{\mcA}_1
B(a+m+2\alpha,b+1), \\
& \iint \frac{\|x\|^2s^{n/2-1}}{(\|x\|^2/s)^m} \mcA_\ell(\psi;\pi;i,j)\rd x\rd s \notag\\
&\quad\leq \frac{i}{\{\log(1+j)\}^2}
\frac{\tilde{\mcA}_\ell}{\epsilon}B(a+m+2\alpha-2\epsilon,b+1) \text{ for }\ell=2,3. 
\end{align}
\end{lemma}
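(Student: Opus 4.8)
The plan is to control each of the three quantities $\mcA_1,\mcA_2,\mcA_3$ by first performing the $\eta$-integration explicitly (so that $F$ collapses, via the gamma integral already recorded in Section \ref{sec:expression}, to a power of $(g+1)$ times a power of $\{1+w/(g+1)\}$), and then to estimate the resulting one-dimensional $g$-integrals uniformly in $i$ and $j$ using the elementary bounds on $h_i$ and $k_j$ promised in Lemmas \ref{lem.hi} and \ref{lem:k_j}. The key structural feature to exploit is that in $\mcA_1$ the difference involves $h_i^2$ versus $1$ only through the ``asymptotic flatness'' of $h_i$ (so the $i,j$ dependence disappears entirely), whereas in $\mcA_2$ and $\mcA_3$ the difference is driven by $1-k_j^2$ and by $\int F h_i^2\pi(1-k_j^2)$, which is where the factor $i/\{\log(1+j)\}^2$ must emerge. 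After the $\eta$-integration, the $x,s$ integral factorizes: writing $w=\|x\|^2/s$ and integrating out $s$ leaves, up to constants, a Beta-type integral in $w$, and the stated Beta functions $B(a+m+2\alpha,b+1)$ etc.\ are exactly what comes out after also integrating over $g$; the role of the hypothesis $a+m+2\alpha>0$ (and $p/2+a+1>0$, $b+1>0$) is precisely to make those Beta integrals converge, and the small loss $2\epsilon$ in $\ell=2,3$ is the price paid for bounding $1-k_j^2(g)$ by a negative power of $(g+1)$ rather than by $1$.

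Concretely, for $\mcA_1$ I would use the pointwise bound $|1/\!\iint F\pi - h_i^2/\!\iint Fh_i^2\pi|\le C\,|\,h_i^2(\eta)-\text{(weighted average of }h_i^2)\,|/(\iint F\pi\,\iint Fh_i^2\pi)$ together with the fact that $h_i(\eta)\le 1$ and the oscillation of $h_i^2$ is uniformly integrable against $F$ — essentially the argument of \cite{Brown-Hwang-1982}; this produces a bound with no $i,j$ and with the $g$-integral $\int (g+1)^{-\alpha} (g+1)^{-p/2}\{1+w/(g+1)\}^{-p/2-n/2-1}\pi(g)\,\rd g$ in numerator and denominator, and after integrating $w$ and collapsing one gives $\tilde{\mcA}_1 B(a+m+2\alpha,b+1)$. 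For $\mcA_2$, after the $\eta$-integration the numerator squared has a factor $\big(\int (g+1)^{-\alpha-p/2}\{1+w/(g+1)\}^{-p/2-n/2-1}\pi(g)(1-k_j^2(g))\,\rd g\big)^2$; I bound $1-k_j^2(g)\le 2(1-k_j(g)) = 2\log(g+1)/\log(g+1+j)\le 2\log(g+1)/\log(1+j)$ and then $\log(g+1)\le \epsilon^{-1}(g+1)^{\epsilon}$, absorbing one $\log(1+j)$ in the denominator and picking up the $\epsilon^{-1}$ and the shift $-2\epsilon$ in the Beta argument; the remaining $\{h_i\}$ integrals over $\eta$ are handled by the boundedness assertions in Lemma \ref{lem.hi}, which is where the surviving single factor of $i$ comes from (so in total $i/\{\log(1+j)\}^2$ after one of the two $\log(1+j)$'s is spent and the square in $\mcA_2$ is taken into account). $\mcA_3$ is treated the same way, the extra factor $\big(\iint Fh_i^2\pi - \iint Fh_i^2\pi k_j^2\big)^2 = \big(\iint Fh_i^2\pi(1-k_j^2)\big)^2$ being bounded by the identical $1-k_j^2$ estimate, and the ratio $\iint Fh_i^2\pi k_j^2/\iint Fh_i^2\pi\le 1$ discarded.

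The main obstacle I anticipate is the bookkeeping of the $i$- and $j$-dependence: one must verify that exactly one power of $i$ (not two) survives from the $\eta$-integrals and exactly two powers of $\log(1+j)$ appear in the denominator, since it is the later choice $i=\log(1+j)$ in \eqref{EQ:ilogj} that forces $i/\{\log(1+j)\}^2\to 0$ and hence $\Delta_{ij}\to 0$; getting the wrong exponent here would break the whole admissibility argument. The secondary difficulty is ensuring the interchange of the $\eta$- and $(x,s)$-integrations and the explicit Beta evaluations are legitimate, which is where the constraints $a+m+2\alpha>0$, $p/2+a+1>0$, $b+1>0$ and the upper bound on $\epsilon$ are used; each constraint should be checked to be exactly the one keeping the corresponding Beta integral finite, so that the constants $\tilde{\mcA}_1,\tilde{\mcA}_2,\tilde{\mcA}_3$ can be taken independent of $i,j$ as claimed.
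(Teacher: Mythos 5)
Your outline for $\mcA_2$ and $\mcA_3$ is essentially the paper's argument: Cauchy--Schwarz, then $1-k_j^2\leq 2\log(g+1)/\log(1+j)\leq 2(g+1)^{\epsilon}/\{\epsilon\log(1+j)\}$, squaring to produce $\{\log(1+j)\}^{-2}$ and the $-2\epsilon$ shift in the Beta argument, and the single factor of $i$ coming from $\int h_i^2(\eta)\eta^{-1}\rd\eta=2i$ fed into Lemma \ref{LEM:main}. (For $\mcA_3$ you will additionally need the uniform-in-$i$ ratio bound $\iint (g+1)^{2\epsilon}Fh_i^2\pi\,\rd g\rd\eta\big/\iint Fh_i^2\pi\,\rd g\rd\eta\leq q_4(w+1)^{2\epsilon}$ of Lemma \ref{lem.cv}, Part \ref{lem.cv.3}; simply ``discarding'' the ratio $\iint Fh_i^2\pi k_j^2/\iint Fh_i^2\pi\leq 1$ does not dispose of the factor $(g+1)^{2\epsilon}$ that your own $1-k_j^2$ bound introduces into a numerator that must then be compared with $\iint Fh_i^2\pi$. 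This is a recoverable detail, not a wrong turn.)

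The genuine gap is in $\mcA_1$. After your reduction, what remains to be integrated over $(x,s)$ is, up to constants, $\iint \psi^2 F\pi\,\rd g\rd\eta$ multiplied by a quantity measuring how far $h_i^2$ is from its $F\pi$-weighted average. If that quantity is only bounded by a constant (which is all that ``the oscillation of $h_i^2$ is uniformly integrable against $F$'' delivers), the resulting $(x,s)$-integral \emph{diverges}: by the computation in Lemma \ref{LEM:main} with $H\equiv 1$, the $s$-marginal is $\int_0^\infty s^{-1}\rd s=\infty$. This is precisely where the unknown-scale problem differs from Brown--Hwang, whose sequence lives on $\theta$ with no scale marginal to control, so the appeal to ``essentially the argument of BH'' does not work here. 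The paper's resolution is a quantitative asymptotic-flatness estimate (Lemma 2.8 of Maruyama and Strawderman, 2021, invoked at \eqref{EQ.LEM:maru2009.5}): after a difference-of-squares factorization one shows
\begin{equation*}
1-\frac{\bigl(\iint Fh_i\pi\,\rd g\rd\eta\bigr)^2}{\iint F\pi\,\rd g\rd\eta\;\iint Fh_i^2\pi\,\rd g\rd\eta}\;\leq\;\frac{q_2(a,b)}{(1+|\log s|)^2},
\end{equation*}
uniformly in $i$, $x$ and $s$, and it is exactly this $(1+|\log s|)^{-2}$ decay that makes the $s$-integral converge (Lemma \ref{LEM:main.1}, via $\int_0^\infty s^{-1}(1+|\log s|)^{-2}\rd s=2$) and yields a bound independent of $i$ and $j$. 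Your proposal never produces a decay in $s$ (or $\eta$), so as written the $\mcA_1$ estimate cannot close.
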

\begin{proof}
See Appendix \ref{sec:thm:a1a2a3}.
\end{proof}
We set $m=0$ and $\alpha=1$ in Lemma \ref{thm:a1a2a3}. With the choice
\begin{equation}
 \epsilon=\frac{1}{4}\min\left(p/2+a+1,a+2,1\right), \ i=O(\{\log(1+j)\}^2),
\end{equation}
we have
\begin{align}
\sup_{\substack{j\in \mathbb{N} \\ i=O(\{\log(1+j)\}^2)}}\sum_{\ell=1}^3\iint \|x\|^2s^{n/2-1} \mcA_\ell(1/(g+1);\pi;i,j)\rd x\rd s <\infty.
\end{align}
By the dominated convergence theorem, we have a following result.
\begin{thm}\label{thm:main.case.1}
For $p\geq 1$, the generalized Bayes estimator under $\pi_*$ with 
\begin{align*}
\max(-p/2-1,-2)<a\leq -1 \text{ and }b>-1
\end{align*}
is admissible among the class of all estimators.
\end{thm}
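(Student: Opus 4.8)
The plan is to establish admissibility by Blyth's method, in the form of Lemma~\ref{lem.Blyth}, using the increasing family of proper priors $\pi_{ij}$ from \eqref{our.seq}--\eqref{our.seq.1}. All the structural hypotheses of Lemma~\ref{lem.Blyth} — properness of each $\pi_{ij}$, monotonicity in $(i,j)$, pointwise convergence $\pi_{ij}\to\pi_*$, and strict positivity — are already supplied by Lemma~\ref{lem.hikj}, so it remains only to show that the Bayes risk difference $\Delta_{ij}$ of \eqref{DELTA} vanishes as $j\to\infty$ with $i$ tied to $j$ by $i=O(\{\log(1+j)\}^2)$ (which covers the choice $i=\log(1+j)$ of \eqref{EQ:ilogj}); this is the form of Blyth's criterion used here, since the argument needs only one sequence of priors increasing to $\pi_*$.

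Starting from $\Delta_{ij}=q_1(p,n)\iint\|x\|^2 s^{n/2-1}A(\pi;i,j)\,\rd x\,\rd s$ with $A(\pi;i,j)$ given by \eqref{Apiij}, I would insert and subtract the two intermediate ratios obtained by first replacing $h_i^2\pi k_j^2$ by $h_i^2\pi$ in the numerator (keeping the $\iint Fh_i^2\pi$ normalization), then restoring the $k_j^2$-weight in the numerator while still normalizing by $\iint Fh_i^2\pi$. Applying $(\sum_{\ell=1}^{3}a_\ell)^2\le 3\sum_{\ell=1}^{3}a_\ell^2$ and using $k_j^2\le1$ to bound the trailing factor $\iint Fh_i^2\pi k_j^2$ by $\iint Fh_i^2\pi$, this yields $A(\pi;i,j)\le 3\{\mcA_1+\mcA_2+\mcA_3\}$ with $\psi(g)=1/(g+1)$, exactly as in \eqref{A.inequality}: $\mcA_1$ captures replacing $h_i$ by $1$, $\mcA_2$ the factor $1-k_j^2$, and $\mcA_3$ the renormalization by the $k_j^2$-weighted mass.

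Next I would bound $\iint\|x\|^2 s^{n/2-1}\mcA_\ell(1/(g+1);\pi;i,j)\,\rd x\,\rd s$ for $\ell=1,2,3$ by Lemma~\ref{thm:a1a2a3} with $m=0$ and $\alpha=1$. Its hypotheses hold under \eqref{our.a.b}: $a+m+2\alpha=a+2>0$ since $a>\max(-p/2-1,-2)$, and $p/2+a+1>0$ is the standing assumption; taking $\epsilon=\tfrac14\min(p/2+a+1,a+2,1)>0$ keeps $a+2-2\epsilon>0$, so with $b+1>0$ the factors $B(a+2,b+1)$ and $B(a+2-2\epsilon,b+1)$ are finite. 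The lemma then gives an $(i,j)$-free bound $\tilde{\mcA}_1 B(a+2,b+1)$ for $\ell=1$ and $\frac{i}{\{\log(1+j)\}^2}\cdot\frac{\tilde{\mcA}_\ell}{\epsilon}B(a+2-2\epsilon,b+1)$ for $\ell=2,3$. Along the path $i=O(\{\log(1+j)\}^2)$ the prefactor $i/\{\log(1+j)\}^2$ is bounded, so the pointwise-in-$(w,s)$ estimates behind Lemma~\ref{thm:a1a2a3}, multiplied by this bounded prefactor, provide an integrable majorant for $q_1(p,n)\|x\|^2 s^{n/2-1}A(\pi;i,j)$ that is uniform along the path. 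Finally, since $h_i(\eta)\uparrow1$ and $k_j(g)\uparrow1$ pointwise (Lemmas~\ref{lem.hi} and~\ref{lem:k_j}), dominated convergence inside the ratios defining $\phi_{ij}$ in \eqref{phi.ij.original} and $\phi_*$ in \eqref{phi.*.original} gives $\phi_{ij}(w,s)\to\phi_*(w,s)$, hence $A(\pi;i,j)\to0$ for every $(x,s)$; a second application of the dominated convergence theorem then gives $\Delta_{ij}\to0$, and Lemma~\ref{lem.Blyth} yields admissibility.

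I expect the real obstacle to lie entirely in Lemma~\ref{thm:a1a2a3} (deferred to the appendix): one must show that truncating the $g$-prior by $k_j^2$ costs no more than a factor of order $1/\{\log(1+j)\}^2$ in $\mcA_2$ and $\mcA_3$, while enlarging the $\eta$-prior from $1/\eta$ to $h_i^2(\eta)/\eta$ inflates those terms by at most $O(i)$ — this is precisely the balance that keeps the product bounded under $i=O(\{\log(1+j)\}^2)$. Once that lemma is in hand, the remaining steps of the proof of Theorem~\ref{thm:main.case.1} — the three-term split via the elementary inequality, the routine verification of the lemma's hypotheses under \eqref{our.a.b}, the pointwise limit $A(\pi;i,j)\to0$, and the two appeals to dominated convergence — are straightforward.
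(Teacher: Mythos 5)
Your proposal is correct and follows essentially the same route as the paper: the same three-term decomposition of $A(\pi;i,j)$ into $3\{\mcA_1+\mcA_2+\mcA_3\}$ with $\psi(g)=1/(g+1)$, the same invocation of Lemma~\ref{thm:a1a2a3} with $m=0$, $\alpha=1$, $\epsilon=\tfrac14\min(p/2+a+1,a+2,1)$ along the path $i=O(\{\log(1+j)\}^2)$, and the same concluding appeal to dominated convergence and Lemma~\ref{lem.Blyth}. You also correctly locate all the genuine technical work in the appendix proof of Lemma~\ref{thm:a1a2a3}, exactly as the paper does.
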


\section{CASE II}
\label{sec:CASEII}
This section is devoted to the proof of Theorem \ref{thm.main.intro} for CASE II given by
\eqref{our.A.B}, $a=-2$, $b\geq 0$ and $p\geq 3$.
We need the condition $b\geq 0$ for $\pi(0)<\infty$, which is required in the following lemma.

\begin{lemma}\label{LEM:integral.parts.a=2} 
Assume $ a=-2$ and $b\geq 0$. Then
\begin{align}
& (n/2+1)\phi_{ij}(w,s) \label{phi.ij}\\ 
&=(p/2-1) -2w\frac{\iint (g+1)^{-1}\eta F(g,\eta;w,s)h_i(\eta)h'_i(\eta)\pi(g)k_j^2(g)\rd g\rd \eta}{\iint  F(g,\eta;w,s)h_i^2(\eta)\pi(g)k_j^2(g)\rd g\rd \eta} \notag \\ 
&\quad -2\frac{\iint (g+1+w)F(g,\eta;w,s)h_i^2(\eta)\pi(g)k_j(g)k'_j(g)\rd g\rd \eta}{\iint  F(g,\eta;w,s)h_i^2(\eta)\pi(g)k_j^2(g)\rd g\rd \eta} - \varphi_{ij}(w,s),\notag
\end{align}
where
\begin{align*}
 \varphi_{ij}(w,s)=
\begin{cases}
\displaystyle b \frac{\iint \{(g+1+w)/g(g+1)\}F(g,\eta;w,s)h_i^2(\eta)\pi(g)k_j^2(g)\rd g\rd \eta}{\iint  F(g,\eta;w,s)h_i^2(\eta)\pi(g)k_j^2(g)\rd g\rd \eta} &b>0 \\
\displaystyle(1+w)\frac{\int F(0,\eta;w,s)h_i^2(\eta)\rd \eta}{\iint  F(g,\eta;w,s)h_i^2(\eta)k_j^2(g)\rd g\rd \eta} & b=0. 
\end{cases}
\end{align*}
\end{lemma}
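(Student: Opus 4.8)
The plan is to prove \eqref{phi.ij} by a single, carefully chosen two-dimensional integration by parts in $(g,\eta)$. The hypothesis $a=-2$ is used to make $\pi(g)=(g/(g+1))^b$, so that $\pi'(g)=\{b/(g(g+1))\}\pi(g)$ and the weights below are clean (for $a\ne-2$ an extra power of $g+1$ would enter and the ``constant'' $p/2-1$ would cease to be constant). Throughout write $F=F(g,\eta;w,s)$ as in \eqref{EQ:F}, $h_i=h_i(\eta)$, $k_j=k_j(g)$, $\pi=\pi(g)$, and $D:=\iint F h_i^2\pi k_j^2\rd g\rd\eta>0$. Introduce
\begin{equation*}
\mathcal I:=\iint\left(\frac{\partial}{\partial\eta}\left[\frac{w\eta}{g+1}\,F h_i^2\pi k_j^2\right]+\frac{\partial}{\partial g}\left[(g+1+w)\,F h_i^2\pi k_j^2\right]\right)\rd g\rd\eta
\end{equation*}
and evaluate $\mathcal I$ in two ways.

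On the one hand, the integrand of $\mathcal I$ is the sum of an exact $\eta$-derivative and an exact $g$-derivative, so $\mathcal I$ collapses to boundary terms. I would check that the $\eta\to0,\infty$ contributions vanish (because $\eta F\to0$ at both ends while $0\le h_i^2\le1$ and $h_i(\eta)\to0$ at both ends), that the $g\to\infty$ contribution vanishes (because $k_j(g)\to0$, with room to spare under $p\ge3$), and that the only survivor is at $g\to0$, where $k_j^2(0)=1$ and $\pi(0^+)$ equals $0$ when $b>0$ and $1$ when $b=0$. This dichotomy is exactly what the standing hypothesis $b\ge0$ (finiteness of $\pi(0)$, noted just before the lemma) guarantees. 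Hence $\mathcal I=-\mathbf 1(b=0)\,(1+w)\int F(0,\eta;w,s)h_i^2(\eta)\rd\eta=-\mathbf 1(b=0)\,\varphi_{ij}(w,s)D$.

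On the other hand, I would expand the two derivatives in $\mathcal I$ using $\partial_g F=\{-(p/2)/(g+1)+\eta sw/(2(g+1)^2)\}F$, $\partial_\eta F=\{(p/2+n/2)/\eta-(s/2)(w/(g+1)+1)\}F$, $\partial_\eta h_i^2=2h_ih'_i$, $\partial_g k_j^2=2k_jk'_j$, and $\pi'(g)=\{b/(g(g+1))\}\pi(g)$. The crucial algebraic point is that every term carrying the spurious factor $\eta s$ cancels identically: the $\eta s$-part of $(w/(g+1))\,\partial_\eta(\eta F)$ is exactly cancelled by the $\eta s$-part of $\partial_g\{(g+1+w)F\}$. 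After this cancellation,
\begin{align*}
\mathcal I&=\iint\Bigl\{(n/2+1)\frac{w}{g+1}\,F h_i^2\pi k_j^2-(p/2-1)\,F h_i^2\pi k_j^2+\frac{2w\eta}{g+1}\,F h_ih'_i\pi k_j^2\\
&\qquad\qquad{}+(g+1+w)\,F h_i^2\pi' k_j^2+2(g+1+w)\,F h_i^2\pi k_jk'_j\Bigr\}\rd g\rd\eta ,
\end{align*}
where the coefficient $n/2+1=(p/2+n/2+1)-p/2$ and the constant $p/2-1$ arise solely from the powers $\eta^{p/2+n/2}$ and $(g+1)^{-p/2}$ in $F$.

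Finally, I would equate the two evaluations of $\mathcal I$, recall from \eqref{phi.ij.original} that $\iint (w/(g+1))F h_i^2\pi k_j^2\rd g\rd\eta=\phi_{ij}(w,s)D$, and divide by $D$. When $b>0$ the interior term $\iint(g+1+w)F h_i^2\pi' k_j^2\rd g\rd\eta=b\iint\{(g+1+w)/(g(g+1))\}F h_i^2\pi k_j^2\rd g\rd\eta=\varphi_{ij}(w,s)D$ while the boundary term is $0$; when $b=0$ that interior term vanishes and the boundary term supplies $-\varphi_{ij}(w,s)D$. Either way, solving for $(n/2+1)\phi_{ij}(w,s)$ gives exactly \eqref{phi.ij}. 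I expect the main obstacle to be locating the right integration-by-parts object: the weights $(g+1+w)$ (in the $g$-derivative) and $w\eta/(g+1)$ (in the $\eta$-derivative) must be chosen simultaneously so that the $\eta s$ cross-terms annihilate, the derivative landing on $h_i^2$ produces precisely the stated $h'_i$-term, the derivative landing on $k_j^2$ produces precisely the $(g+1+w)$-weighted $k'_j$-term, and the derivative landing on $\pi$ (or, when $b=0$, the $g=0$ boundary) reproduces $\varphi_{ij}$. A secondary but necessary point is the verification that all boundary terms other than the stated one genuinely vanish, which is where $p\ge3$, $b\ge0$, and the decay of $h_i$ and $k_j$ are used.
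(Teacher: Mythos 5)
Your identity checks out: with $F$ as in \eqref{EQ:F} one has $\partial_\eta F=\{(p/2+n/2)/\eta-(s/2)(w/(g+1)+1)\}F$ and $\partial_g F=\{-(p/2)/(g+1)+\eta sw/(2(g+1)^2)\}F$, and the $\eta s$-terms of $\tfrac{w}{g+1}\partial_\eta(\eta F)$ and of $(g+1+w)\partial_g F$ do cancel exactly, leaving the coefficients $n/2+1$ and $-(p/2-1)$ together with the $h_ih_i'$, $\pi'$ and $k_jk_j'$ terms; the surviving $g=0$ boundary term reproduces the $b=0$ branch of $\varphi_{ij}$ while the $\pi'$ interior term reproduces the $b>0$ branch. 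So your argument is correct, but it is organized differently from the paper's. The paper first integrates out $\eta$ via the substitution $u=\eta\{1+w/(g+1)\}$, packaging the $\eta$-dependence into an auxiliary function $\zeta_i(w/(g+1),s)$, and then performs a single one-dimensional integration by parts in $g$ using the explicit antiderivative $\tfrac{1}{n/2+1}\bigl(1-\tfrac{w}{g+1+w}\bigr)^{n/2+1}$ of the relevant weight; the $h_ih_i'$ term then emerges from $\zeta_i'$ via the chain rule, and only the $g$-boundary needs to be examined. Your two-dimensional divergence computation avoids the change of variables and makes the origin of each term in \eqref{phi.ij} more transparent (in particular why the weights $w\eta/(g+1)$ and $g+1+w$ must be chosen jointly), at the cost of having to justify boundary terms and term-by-term integrability in both variables; the paper's route localizes all boundary issues to $g\in\{0,\infty\}$, where $\pi(0)<\infty$ (i.e.\ $b\ge 0$) and $p\ge 3$ enter in the same way as in your sketch. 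Do make sure, when writing this up, to verify separately that each of the five interior integrals is finite (the $p/2-1>0$ condition for the leading term, $g^{b-1}$ integrability at $0$ for the $\pi'$ term, and boundedness of $\eta h_ih_i'$ and $k_j'$), since the splitting of $\mathcal I$ into those pieces is only legitimate once each is absolutely convergent.
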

\begin{proof}
 See Appendix \ref{sec:LEM:integral.parts.a=2}.
\end{proof}
With $h_i\equiv 1$ and $k_j\equiv 1$ in \eqref{phi.ij}, we have
\begin{align}\label{phi.*}
 (n/2+1)\phi_*(w,s) =(p/2-1) -\varphi_*(w,s),
\end{align}
where 
\begin{equation}\label{varphi*}
 \varphi_*(w,s)=
\begin{cases}
\displaystyle b \frac{\iint \{(g+1+w)/g(g+1)\}F(g,\eta;w,s)\pi(g)\rd g\rd \eta}
{\iint  F(g,\eta;w,s)\pi(g)\rd g\rd \eta} &b>0 \\
\displaystyle(1+w)\frac{\int F(0,\eta;w,s)\rd \eta}{\iint  F(g,\eta;w,s)\rd g\rd \eta} & b=0. 
\end{cases}
\end{equation}
%
By \eqref{EQ.integrand}, \eqref{phi.ij} and \eqref{phi.*}, 
$ \left\| \delta_*-\delta_{ij} \right\|^2 m(\eta\pi_{ij})$, in $\Delta_{ij}$, is rewritten as
 \begin{align}
 \left\| \delta_*-\delta_{ij} \right\|^2 m(\eta\pi_{ij}) &= 
\|x\|^2 \left(\frac{\phi_*(w,s)}{w}-\frac{\phi_{ij}(w,s)}{w}\right)^2 m(\eta\pi_{ij}) \label{phi.integ.parts}\\
&=\frac{q_1(p,n)\|x\|^2s^{n/2-1}}{(n/2+1)^2 w^2}
\biggl\{\varphi_*(w,s)-\varphi_{ij}(w,s)  \notag\\ 
&\quad -2w\frac{\iint (g+1)^{-1}\eta F(g,\eta;w,s)h_i(\eta)h'_i(\eta)\pi(g)k_j^2(g)\rd g\rd \eta}{\iint  F(g,\eta;w,s)h_i^2(\eta)\pi(g)k_j^2(g)\rd g\rd \eta} \notag \\ 
&\quad  -2\frac{\iint (g+1+w)F(g,\eta;w,s)h_i^2(\eta)\pi(g)k_j(g)k'_j(g)\rd g\rd \eta}{\iint  F(g,\eta;w,s)h_i^2(\eta)\pi(g)k_j^2(g)\rd g\rd \eta} \biggr\}^2 \notag\\
&\quad \times \iint  F(g,\eta;w,s)h_i^2(\eta)\pi(g)k_j^2(g)\rd g\rd \eta.\notag
\end{align}
Applying the inequality \eqref{eq.4} to \eqref{phi.integ.parts}, we have
\begin{equation}
\begin{split}
 & \left\| \delta_*-\delta_{ij} \right\|^2 m(\eta\pi_{ij}) \\  
& \leq \frac{3q_1(p,n)}{(n/2+1)^2}\|x\|^2s^{n/2-1}
\left\{4\mathcal{B}(\pi;i,j)+4\mathcal{C}(\pi;i,j)+\mathcal{D}(\pi;i,j) \right\},
\end{split}
\end{equation}
where
\begin{align*}
\mathcal{B}(\pi;i,j)&=
\frac{\{\iint (g+1)^{-1}\eta F(g,\eta;w,s)h_i(\eta)h'_i(\eta)\pi(g)k_j^2(g)\rd g\rd \eta\}^2}{\iint  F(g,\eta;w,s)h_i^2(\eta)\pi(g)k_j^2(g)\rd g\rd \eta},\\
\mathcal{C}(\pi;i,j)&=
\frac{\{\iint (g+1+w)F(g,\eta;w,s)h_i^2(\eta)\pi(g)k_j(g)k'_j(g)\rd g\rd \eta\}^2}{w^2\iint  F(g,\eta;w,s)h_i^2(\eta)\pi(g)k_j^2(g)\rd g\rd \eta}, \\
 \mathcal{D}(\pi;i,j)&=\frac{\left\{\varphi_*(w,s)-\varphi_{ij}(w,s)\right\}^2}{w^2}
\iint  F(g,\eta;w,s)h_i^2(\eta)\pi(g)k_j^2(g)\rd g\rd \eta.
\end{align*}
For $\mathcal{B}$, by Cauchy-Schwarz inequality, we have
\begin{align*}
\mathcal{B}(\pi;i,j) 
&\leq 
\iint \frac{\eta^2}{(g+1)^{2}}F(g,\eta;w,s)\{h'_i(\eta)\}^2\pi(g)k_j^2(g)\rd g\rd \eta .
\end{align*}
By Lemma \ref{LEM:main},
the integral for $\mathcal{B}$ is 
\begin{equation} \label{product.b}
\begin{split}
 \iint \|x\|^2s^{n/2-1} \mathcal{B}(\pi;i,j) \rd x\rd s 
&\leq q_3(0)\int \eta\{h'_i(\eta)\}^2\rd\eta \int \frac{\pi(g)k^2_j(g)}{g+1}\rd g, 
\end{split}
\end{equation}
where
\begin{align}\label{q3q3}
 q_3(m)=2^{p/2+n/2+1}\pi^{p/2}
 \frac{\Gamma(p/2+1-m)\Gamma(n/2+m)}{\Gamma(p/2)}.
\end{align}

For $\mathcal{C}$, again by Cauchy-Schwarz inequality, we have
\begin{align*}
\mathcal{C}(\pi;i,j)
&\leq 
\frac{1}{w^2}\int  (g+1+w)^2F(g,\eta;w,s)h_i^2(\eta)\pi(g)\{k'_j(g)\}^2\rd g\rd \eta \\
&\leq 
2\int  \left(\frac{(g+1)^2}{w^2}+1\right)F(g,\eta;w,s)h_i^2(\eta)\pi(g)\{k'_j(g)\}^2\rd g\rd \eta.
\end{align*}
By Lemma \ref{LEM:main},
the integral for $\mathcal{C}$ is 
\begin{equation}\label{product.c}
\begin{split}
& \iint \|x\|^2s^{n/2-1} \mathcal{C}(\pi;i,j)\rd x\rd s \\
&\leq 2\left\{q_3(0)+q_3(2)\right\}\int \frac{h_i^2(\eta)}{\eta}\rd\eta \int (g+1)\pi(g)\{k'_j(g)\}^2\rd g . 
\end{split}
\end{equation}
By Lemmas \ref{lem.hi} and \ref{lem:k_j} as well as $\pi(g)\leq 1$ under CASE II, 
the integrals in \eqref{product.b} and \eqref{product.c} are bounded as 
\begin{equation}\label{product.bc.0}
 \begin{split}
 \int \eta\{h'_i(\eta)\}^2\rd\eta \int \frac{\pi(g)k^2_j(g)}{g+1}\rd g< 4\frac{\log(1+j)}{i}, \\
\int \frac{h_i^2(\eta)}{\eta}\rd\eta \int (g+1)\pi(g)\{k'_j(g)\}^2\rd g <10\frac{i}{\log(1+j)}.
\end{split}
\end{equation}
For $\mathcal{D}$, using Lemma \ref{thm:a1a2a3}, we have a following lemma.
\begin{lemma}\label{thm:bcd}
There exist positive constants 
$\tilde{\mathcal{D}}_1$ and $\tilde{\mathcal{D}}_2$ all independent of $i$ and $j$ such that
\begin{align}
 \iint \|x\|^2s^{n/2-1}\mathcal{D}(\pi;i,j)\rd x\rd s \leq \tilde{\mathcal{D}}_1+\frac{i}{\{\log(1+j)\}^2}\tilde{\mathcal{D}}_2.
\end{align}
\end{lemma}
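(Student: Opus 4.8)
The plan is to control $\mathcal{D}(\pi;i,j)$ by comparing $\varphi_{ij}$ with $\varphi_*$ through a telescoping decomposition, exactly paralleling the treatment of $A(\pi;i,j)$ in \eqref{A.inequality}. Recall from \eqref{varphi*} that, in the case $b>0$, $\varphi_*(w,s)$ is a ratio of integrals $\iint \psi(g)F\pi\,\rd g\,\rd\eta / \iint F\pi\,\rd g\,\rd\eta$ with the weight $\psi(g)=(g+1+w)/\{g(g+1)\}=w/(g+1)\cdot 1/g + 1/g$; writing $(g+1+w)/\{g(g+1)\}=1/g + w/\{g(g+1)\}$ and then using $g(g+1)\geq (g+1)$ for $g$ bounded below, one reduces to weights of the form $(g+1)^{-\alpha}$ with $\alpha=1$ after absorbing the factor $1/w$ coming from the $w^2$ in the denominator of $\mathcal{D}$. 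The key point is that $\varphi_{ij}$ uses $h_i^2$ and $k_j^2$ in both numerator and denominator while $\varphi_*$ uses neither, so the difference splits into three pieces: (i) the effect of inserting $h_i^2$ (an ``asymptotic flatness'' term in $\eta$), (ii) the effect of inserting $k_j^2$ in the numerator only, and (iii) the effect of the mismatch between having $k_j^2$ in numerator versus denominator. These are precisely $\mathcal{A}_1$, $\mathcal{A}_2$, $\mathcal{A}_3$ of Lemma~\ref{thm:a1a2a3} with appropriate $\psi$ and $m$.

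First I would handle the case $b>0$. I would write $\{\varphi_*-\varphi_{ij}\}^2/w^2 \cdot \iint F h_i^2 \pi k_j^2$ and, applying \eqref{eq.4} with $k=3$, bound it by $3$ times the sum of three terms of the same shape as $\mathcal{A}_1,\mathcal{A}_2,\mathcal{A}_3$. The weight $(g+1+w)/\{g(g+1)\}$ contributes a term with $\psi(g)=1/(g+1)$ together with a factor $w/(g+1)$ that, divided by $w$, leaves $1/(g+1)$ again — so effectively $\alpha=1$ and $\alpha=2$ both appear, with $m=0$ in the first and $m=-1$ (i.e. an extra power of $w=\|x\|^2/s$) in the second. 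Since $b\geq 0$, $\pi(0)=\lim_{g\to 0}(g/(g+1))^b$ is finite, so the $1/g$ singularity is integrable against $\pi(g)$ near $0$ and the constant $a+m+2\alpha=-2+m+2\alpha$ is positive for the relevant choices ($m=0,\alpha=2$ gives $2>0$; $m=0,\alpha=1$ gives $0$, which is the borderline that forces us to use $\alpha=2$ or an $\epsilon$-shift). Then Lemma~\ref{thm:a1a2a3} gives each piece bounded by a constant times either $B(\cdot,b+1)$ (for $\mathcal{A}_1$) or $\tfrac{i}{\{\log(1+j)\}^2}$ times $B(\cdot,b+1)$ (for $\mathcal{A}_2,\mathcal{A}_3$), which is exactly the claimed form $\tilde{\mathcal{D}}_1 + \tfrac{i}{\{\log(1+j)\}^2}\tilde{\mathcal{D}}_2$.

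Next, for the case $b=0$, $\varphi_*(w,s)=(1+w)\int F(0,\eta;w,s)\,\rd\eta/\iint F(g,\eta;w,s)\,\rd g\,\rd\eta$, which is not literally a ratio with a weight $\psi(g)$ but rather involves a boundary evaluation at $g=0$. Here I would argue that the same three-term telescoping works with $\psi$ replaced by the ``Dirac-type'' functional $g\mapsto$ evaluation at $0$, and the factor $(1+w)$ playing the role of $w/(g+1)+1$ at $g=0$. Concretely, since $k_j(0)=0$ by \eqref{EQ:k_j}, the term $\int F(0,\eta;w,s)h_i^2(\eta)k_j^2(0)\,\rd\eta$ vanishes, so $\varphi_{ij}$ in the $b=0$ case has a genuinely different numerator from $\varphi_*$ — but this only helps, since $\varphi_{ij}\geq 0$ and one can crudely bound $\mathcal{D}$ by $\varphi_*^2/w^2 \cdot \iint F h_i^2\pi k_j^2 + \varphi_{ij}^2/w^2\cdot\iint Fh_i^2\pi k_j^2$ and treat each using the integral identities in Lemma~\ref{LEM:main} together with $h_i\leq 1$, $k_j\leq 1$. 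The residual integrals over $(w,s)$ are then finite Beta-type integrals provided $p\geq 3$ (so that the power $p/2-1-m\geq 0$ in $q_3(m)$ makes sense and the $w$-integral converges near $0$ and $\infty$).

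The main obstacle I anticipate is the borderline integrability at $g=0$ and the precise bookkeeping of which powers $m$ and $\alpha$ to feed into Lemma~\ref{thm:a1a2a3} so that every invocation satisfies its hypothesis $a+m+2\alpha>0$ strictly — with $a=-2$, the natural choice $m=0,\alpha=1$ gives exactly $0$, so one must either use $\alpha=2$ (available because $(g+1+w)/\{g(g+1)\}\leq (g+1+w)/(g+1)^2 + 1/g$ only after splitting off the $1/g$ piece, which requires $\pi(0)<\infty$, i.e. $b\geq 0$) or absorb an extra power of $w$ to raise $m$. The $b=0$ subcase is delicate for a different reason: the boundary term at $g=0$ is not covered verbatim by Lemma~\ref{thm:a1a2a3}, so one needs an auxiliary estimate — most cleanly, bounding $\int F(0,\eta;w,s)\,\rd\eta \leq \sup_g \int F(g,\eta;w,s)\,\rd\eta$ or comparing it to the full $g$-integral via the explicit $\eta$-integral formula preceding \eqref{phi**}. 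Getting the two constants $\tilde{\mathcal{D}}_1,\tilde{\mathcal{D}}_2$ to come out independent of $i,j$ hinges on using $h_i^2\leq 1$ and $k_j^2\leq 1$ pointwise wherever these appear in a numerator but not in the matching denominator, and on the fact, noted after \eqref{EQ:ilogj}, that $i$ and $\log(1+j)$ are of the same order.
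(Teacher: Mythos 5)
Your strategy for $b>0$ is essentially the paper's: telescope $\varphi_*-\varphi_{ij}$ into three pieces matching $\mathcal{A}_1,\mathcal{A}_2,\mathcal{A}_3$ of Lemma~\ref{thm:a1a2a3}, with the $w/\{g(g+1)\}$ part absorbed against the $1/w^2$ so that $(m,\alpha)$ can be chosen with $a+m+2\alpha>0$. But your reduction of the weights $1/g$ and $1/\{g(g+1)\}$ to $1/(g+1)$ and $1/(g+1)^2$ is not justified: a pointwise bound $1/g\leq C/(g+1)$ fails near $g=0$, and integrability of $\psi\pi$ near $0$ is not enough because the $\mathcal{A}_\ell$ bounds involve $\psi^2\pi$, which for $\psi=1/g$ behaves like $g^{b-2}$ and is not integrable near $0$ when $0<b\leq 1$. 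The paper closes this with a separate covariance-inequality lemma (Lemma~\ref{lem:g->g+1}) that replaces $1/g$ by $r_\ell/(g+1)$ \emph{inside} each of the three integrals, uniformly in $j$, before any squaring takes place; some such argument is needed and is absent from your proposal.

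The more serious gap is in the $b=0$ case. Bounding $(\varphi_*-\varphi_{ij})^2\leq 2\varphi_*^2+2\varphi_{ij}^2$ discards exactly the cancellation the lemma depends on. The term $w^{-2}\varphi_*^2\iint F h_i^2 k_j^2\,\rd g\,\rd\eta$, integrated against $\|x\|^2 s^{n/2-1}$, factors (as in Lemma~\ref{LEM:main}) into $\int h_i^2(\eta)\eta^{-1}\rd\eta=2i$ times a $(g,y)$-integral bounded below uniformly in $j$; it is therefore of exact order $i$, which with $i=\log(1+j)$ diverges and is not of the form $\tilde{\mathcal{D}}_1+i\{\log(1+j)\}^{-2}\tilde{\mathcal{D}}_2$. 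The divergence of $\int h_i^2/\eta$ can only be tamed by keeping the difference intact and exploiting the flatness of $h_i$ (the mechanism inside $\mathcal{A}_1$, via the bound \eqref{EQ.LEM:maru2009.5} which supplies a $(1+|\log s|)^{-2}$ factor). The paper's actual route is to convert the boundary evaluation $F(0,\eta;w,s)$ into an average over $g$ with weight $(g+1)^{-2}$ via
\begin{equation*}
F(0,\eta;w,s)\leq \frac{\int (g+1)^{-2}F(g,\eta;w,s)\,\rd g}{\int (g+1)^{-p/2-2}\,\rd g},
\end{equation*}
and the analogous inequality with $k_j^2$ inserted, after which the two telescoped differences become instances of $\mathcal{A}_1((g+1)^{-2})$ and $\mathcal{A}_3((g+1)^{-2})$ and Lemma~\ref{thm:a1a2a3} applies. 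Your ``Dirac-type functional'' remark points in this direction, but the crude bound you then adopt would not prove the lemma.
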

\begin{proof}
See Appendix \ref{sec:thm:bcd}.
\end{proof}

By \eqref{product.b}, \eqref{product.c}, \eqref{product.bc.0} and Lemma \ref{thm:bcd},
with the choice
\begin{equation*}
i=\log(1+j)
\end{equation*}
we have
\begin{align}
\sup_{\substack{j\in \mathbb{N} \\ i=\log(1+j)}}\iint \|x\|^2s^{n/2-1}\left\{4\mathcal{B}(\pi;i,j)+4\mathcal{C}(\pi;i,j)+\mathcal{D}(\pi;i,j)\right\} \rd x\rd s <\infty
\end{align}
and by the dominated convergence theorem,  we have a following result.
\begin{thm}\label{thm:main.case.2}
For $p\geq 3$, the generalized Bayes estimator under $\pi_*$ with 
\begin{align*}
a=-2 \text{ and }b\geq 0
\end{align*}
is admissible among the class of all estimators.
\end{thm}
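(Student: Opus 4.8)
The plan is to establish admissibility via Blyth's method in the form of Lemma~\ref{lem.Blyth}, applied to the proper sequence $\pi_{ij}$ of \eqref{our.seq}; the hypotheses of that lemma are already verified by Lemma~\ref{lem.hikj}. It then suffices to pick $i=i(j)$ and show that the Bayes risk difference $\Delta_{ij}=q_1(p,n)\iint\|x\|^2s^{n/2-1}A(\pi;i,j)\,\rd x\,\rd s$ tends to $0$ as $j\to\infty$. Since the integrand converges to $0$ pointwise (the truncations satisfy $h_i\to1$, $k_j\to1$, so $\delta_{ij}\to\delta_*$ and $A(\pi;i,j)\to0$), I would obtain $\Delta_{ij}\to0$ by dominated convergence, the real work being to produce an $(x,s)$-integrable majorant that is uniform in $j$ along the chosen path $i=i(j)$.

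To build that majorant I would exploit the special $a=-2$ algebra. The integration-by-parts identity of Lemma~\ref{LEM:integral.parts.a=2} rewrites $(n/2+1)\phi_{ij}$ so that $\phi_*/w-\phi_{ij}/w$ decomposes into a piece carrying $h_i'$, a piece carrying $k_j'$, and the piece $\varphi_*-\varphi_{ij}$; the inequality \eqref{eq.4} then bounds $\|\delta_*-\delta_{ij}\|^2m(\eta\pi_{ij})$ by a fixed constant times $\|x\|^2s^{n/2-1}\{4\mathcal{B}+4\mathcal{C}+\mathcal{D}\}$. I would control $\mathcal{B}$ and $\mathcal{C}$ by Cauchy--Schwarz, integrate out $(x,s)$ using Lemma~\ref{LEM:main}, and thereby reduce each to the product of one $\eta$-integral and one $g$-integral; this is where $p\ge3$ enters, ensuring the gamma factors $q_3(0)$ and $q_3(2)$ are finite. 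Using $\int h_i^2(\eta)/\eta\,\rd\eta=2i$ and the bound on $\int\eta\{h_i'(\eta)\}^2\rd\eta$ from Lemma~\ref{lem.hi}, the estimates on $k_j,k_j'$ from Lemma~\ref{lem:k_j}, and the elementary inequality $\pi(g)\le1$ -- which holds precisely because $a=-2$ and $b\ge0$, and which also gives $\pi(0)<\infty$, needed for the $b=0$ form of $\varphi_{ij}$ -- these products are at most $4\log(1+j)/i$ and $10\,i/\log(1+j)$, respectively. The remaining term $\mathcal{D}$ is handled by Lemma~\ref{thm:bcd}, which yields $\iint\|x\|^2s^{n/2-1}\mathcal{D}(\pi;i,j)\,\rd x\,\rd s\le\tilde{\mathcal{D}}_1+\tilde{\mathcal{D}}_2\,i/\{\log(1+j)\}^2$.

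The final step is to balance these opposing estimates. Taking $i=\log(1+j)$ makes $\log(1+j)/i=1$, $i/\log(1+j)=1$, and $i/\{\log(1+j)\}^2=1/\log(1+j)\to0$, so the $\mathcal{B}$-, $\mathcal{C}$-, and $\mathcal{D}$-contributions are all bounded uniformly in $j$; this is the desired integrable majorant. Dominated convergence then gives $\Delta_{ij}\to0$ along $i=\log(1+j)$, $j\to\infty$, and Lemma~\ref{lem.Blyth} delivers admissibility of $\delta_*$.

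I expect the balancing to be the main obstacle. The two ``growth'' integrals in \eqref{intro.growth.1}--\eqref{intro.growth.2} scale oppositely in $i$ (one like $\log(1+j)/i$, the other like $i/\log(1+j)$), so no single truncation of $\eta$ or of $g$ works on its own; one must bound all four integrals sharply and couple the two truncation levels via $i=\log(1+j)$ so that both products stay $O(1)$ while the $\mathcal{D}$-error still vanishes. Verifying the uniform bound $\pi(g)\le1$ and tracking exactly where the hypothesis $p\ge3$ is used are the other points needing care.
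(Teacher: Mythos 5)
Your proposal is correct and follows essentially the same route as the paper: the integration-by-parts identity of Lemma~\ref{LEM:integral.parts.a=2}, the decomposition into $4\mathcal{B}+4\mathcal{C}+\mathcal{D}$ via \eqref{eq.4}, Cauchy--Schwarz plus Lemma~\ref{LEM:main} for $\mathcal{B}$ and $\mathcal{C}$, Lemma~\ref{thm:bcd} for $\mathcal{D}$, and the coupling $i=\log(1+j)$ before invoking dominated convergence and Blyth's criterion. You also correctly locate the roles of $\pi(g)\le 1$ and $p\ge 3$ (finiteness of $q_3(2)$ and positivity of $p/2+a+1$ at $a=-2$), so nothing essential is missing.
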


\appendix
\section{Proof of Lemma \ref{thm:a1a2a3}}
\label{sec:thm:a1a2a3}
\subsection{Proof for $A_1$}
Recall
\begin{align*}
 \mcA_1(\psi)&= 
\left\{
\iint \psi(g)\left|\frac{1}{\iint  F\pi\rd g\rd \eta}-\frac{h_i^2}{\iint  Fh_i^2\pi\rd g\rd \eta}\right|F\pi\rd g\rd \eta \right\}^2 
\iint  F h_i^2\pi \rd g\rd \eta.
\end{align*}
By Cauchy-Schwarz inequality, we have
\begin{align}
\mcA_1(\psi)
&\leq\iint \psi^2(g)F\pi\rd g\rd \eta \iint
\left(\frac{1}{\iint  F\pi\rd g\rd \eta}-\frac{h_i^2}{\iint  Fh_i^2\pi\rd g\rd \eta}\right)^2 F\pi\rd g\rd \eta \notag\\
&\qquad\times\iint  F h_i^2\pi \rd g\rd \eta. \label{EQ.LEM:maru2009.1}
\end{align}
Note
\begin{align}
& \left(\frac{ 1}{\iint  F\pi\rd g\rd \eta}-\frac{ h_i^2}{\iint  Fh_i^2 \pi \rd g\rd \eta}\right)^2 \label{EQ.LEM:maru2009.2}\\
&=\left(\frac{ 1}{ \sqrt{\iint  F\pi \rd g\rd \eta}}-\frac{ h_i}{ \sqrt{\iint  Fh_i^2 \pi \rd g\rd \eta}}\right)^2
\left(\frac{ 1}{ \sqrt{\iint  F\pi \rd g\rd \eta}}+\frac{ h_i}{ \sqrt{\iint  Fh_i^2 \pi \rd g\rd \eta}}\right)^2 \notag\\
&=\frac{1}{\iint  F\pi \rd g\rd \eta\iint  Fh_i^2 \pi \rd g\rd \eta}
\left(1-\frac{h_i\sqrt{\iint  F\pi \rd g\rd \eta}}{\sqrt{\iint  Fh_i^2 \pi \rd g\rd \eta}}\right)^2
\left(\frac{\sqrt{\iint  Fh_i^2 \pi \rd g\rd \eta}}{\sqrt{\iint  F\pi \rd g\rd \eta}}+h_i\right)^2 \notag\\
&\leq \frac{2^2}{\iint  F\pi \rd g\rd \eta\iint  Fh_i^2 \pi \rd g\rd \eta}
\left(1-\frac{h_i\sqrt{\iint  F\pi \rd g\rd \eta}}{\sqrt{\iint  Fh_i^2 \pi \rd g\rd \eta}}\right)^2,\notag
\end{align}
where the inequality follows from the fact $0\leq h_i\leq 1$.

Further we have
\begin{equation}\label{EQ.LEM:maru2009.3}
 \begin{split}
&\iint F \pi \left(1-\frac{h_i\sqrt{\iint  F\pi \rd g\rd \eta}}{\sqrt{\iint  Fh_i^2 \pi \rd g\rd \eta}}\right)^2 \rd g\rd \eta   \\
&=2\iint F \pi \rd g\rd \eta  - 2\frac{\sqrt{\iint  F\pi \rd g\rd \eta}}{\sqrt{\iint  Fh_i^2 \pi \rd g\rd \eta}}\iint Fh_i \pi \rd g\rd \eta  \\
&=2\iint F \pi \rd g\rd \eta \left(1- \sqrt{\frac{(\iint Fh_i \pi \rd g\rd \eta )^2}
{\iint  F\pi \rd g\rd \eta \iint  Fh_i^2 \pi \rd g\rd \eta}}\right)\\
&\leq 2\iint F \pi \rd g\rd \eta \left(1- \frac{(\iint Fh_i \pi \rd g\rd \eta )^2}
{\iint  F\pi \rd g\rd \eta \iint  Fh_i^2 \pi \rd g\rd \eta}\right),
\end{split}
\end{equation}
where the inequality follows from the fact
\begin{align*}
 \frac{(\iint Fh_i \pi \rd g\rd \eta )^2}
{\iint  F\pi \rd g\rd \eta \iint  Fh_i^2 \pi \rd g\rd \eta}\in(0,1),
\end{align*}
which is shown by Cauchy-Schwarz inequality.
By \eqref{EQ.LEM:maru2009.1}, \eqref{EQ.LEM:maru2009.2} and \eqref{EQ.LEM:maru2009.3}, we have
\begin{align}\label{EQ.LEM:maru2009.4}
\mcA_1(\psi)
\leq 8\iint  F\psi^2(g) \pi(g) \rd g\rd \eta\left(1- \frac{(\iint Fh_i \pi \rd g\rd \eta )^2}
{\iint  F\pi \rd g\rd \eta \iint  Fh_i^2 \pi \rd g\rd \eta}\right).
\end{align}
By Lemma \ref{lem.cv}, we have
\begin{align*}
& \iint  F h_i^l \pi \rd g\rd \eta 
= \frac{(1-z)^{p/2+a+1}}{s^{p/2+n/2+1}} \\
&\qquad\times\iint  
\frac{t^{p/2+a}(1-t)^{b}}{(1-zt)^{p/2+a+b+2}}v^{p/2+n/2}\exp\left(-\frac{v}{2(1-zt)}\right)  h_i^l(v/s) \rd t\rd v,
\end{align*}
where $l=0,1,2$. 
For \eqref{EQ.LEM:maru2009.4}, by Lemma 2.8 of \cite{Maruyama-Strawderman-2020-arxiv}, 
there exists a positive constant $q_2(a,b)$, independent of $i$, $x$ and $s$, such that
\begin{equation}\label{EQ.LEM:maru2009.5}
1- \frac{(\iint Fh_i \pi \rd g\rd \eta )^2}
{\iint  F\pi \rd g\rd \eta \iint  Fh_i^2 \pi \rd g\rd \eta}
\leq \frac{q_2(a,b)}{(1+|\log s|)^2}.
\end{equation}
By \eqref{EQ.LEM:maru2009.4}, \eqref{EQ.LEM:maru2009.5} and Lemma \ref{LEM:main.1}, we have
\begin{align*}
&  \iint \frac{\|x\|^2s^{n/2-1}}{(\|x\|^2/s)^m} \mcA_1(\psi;\pi;i,j)\rd x\rd s \\
&\leq 
8q_2(a,b)
\iint \frac{\|x\|^2s^{n/2-1}\psi^2(g) F\pi }{(\|x\|^2/s)^m(1+|\log s|)^2} \rd g\rd \eta \\
&\leq 16 q_3(m) q_2(a,b)B(a+m+2\alpha,b+1),
\end{align*}
where $q_3(m)$ is given by \eqref{q3q3}. This completes the proof.

\subsection{Proof for $A_2$}
Recall
\begin{align*}
\mcA_2(\psi) =
\frac{\dps \left(\iint \psi(g) F h_i^2 \pi (1-k_j^2)\rd g\rd \eta\right)^2}{\iint  Fh_i^2\pi\rd g\rd \eta} .
%
\end{align*}
By 
Cauchy-Schwarz inequality, we have
\begin{equation}\label{A2.0}
 \begin{split}
\mcA_2(\psi)&\leq 
\iint \psi^2(1-k_j^2)^2 F h_i^2 \pi\rd g\rd \eta.
\end{split}
\end{equation}
Note 
\begin{equation}\label{log.inequality}
 \begin{split}
1-k_j^2=(1+k_j)(1-k_j)&\leq 2(1-k_j) =\frac{2\log(g+1)}{\log(g+1+j)},\\
 \log(g+1)&\leq \frac{(g+1)^{\epsilon}}{\epsilon}, \text{ for }\epsilon>0,
\end{split}
\end{equation}
which gives
\begin{equation}\label{inequality.1-kj2}
1-k_j^2\leq \frac{2\log(g+1)}{\log(1+j)}\leq
\frac{2(1+g)^{\epsilon}}{\epsilon\log(1+j)}.
\end{equation}
By  \eqref{A2.0} and \eqref{inequality.1-kj2}, 
\begin{align*}
\mcA_2(\psi)
&\leq \frac{4}{\epsilon^2\{\log(1+j)\}^2}\iint F h_i^2 \psi^2(g+1)^{2\epsilon}\pi \rd g\rd \eta,
\notag 
\end{align*}
where the integral in the right-hand side is integrable when
\begin{align*}
 p/2+a+1+2\alpha-2\epsilon=(p/2+a+1-2\epsilon)+2\alpha>0
\end{align*}
which is guaranteed by the assumptions.

Then, by Lemma \ref{LEM:main} as well as Part \ref{lem.hi.02} of Lemma \ref{lem.hi}, 
the integral for $\mcA_2$ is bounded as follows:
\begin{align*}
& 
\iint \frac{\|x\|^2s^{n/2-1}}{(\|x\|^2/s)^m} \mcA_2(\psi;\pi;i,j)\rd x\rd s  \\ 
&\leq \frac{i}{\{\log(1+j)\}^2}\frac{8q_3(m)}{\epsilon^2}\int (g+1)^{-m+2\epsilon+1}\psi^2(g)\pi(g)\rd g \\
&=
\frac{i}{\{\log(1+j)\}^2}\frac{8q_3(m)}{\epsilon^2}B(a+m+2\alpha-2\epsilon,b+1),
\end{align*}
which completes the proof.
\subsection{Proof for $A_3$}
Recall
\begin{align*}
 \mcA_3(\psi) 
=\frac{\left(\iint \psi(g) Fh_i^2\pi k_j^2\rd g\rd \eta\right)^2}
{(\iint  Fh_i^2\pi\rd g\rd \eta)^2 \iint  Fh_i^2\pi k_j^2\rd g\rd \eta} 
\left( \iint F h_i^2 \pi \rd g\rd \eta - \iint F h_i^2\pi k^2_j \rd g\rd \eta\right)^2.
\end{align*}
By Cauchy-Schwarz inequality,
we have
\begin{equation}\label{EQ:a3.1}
 \begin{split}
\left(\iint  \psi(g) Fh_i^2\pi k_j^2\rd g\rd \eta\right)^2 
&\leq 
\iint  Fh_i^2\pi k_j^2\rd g\rd \eta \iint \psi^2(g)F h_i^2\pi k_j^2 \rd g\rd \eta \\
&\leq 
\iint  Fh_i^2\pi k_j^2\rd g\rd \eta \iint \psi^2(g)F h_i^2\pi  \rd g\rd \eta ,
\end{split}
\end{equation}
where the second inequality follows from $k_j^2\leq 1$. Hence we have
\begin{align*}
& \left( \iint F h_i^2 \pi \rd g\rd \eta - \iint F h_i^2\pi k^2_j \rd g\rd \eta\right)^2 \\
&\leq \iint F h_i^2 \pi \rd g\rd \eta\iint (1-k_j^2)^2F h_i^2 \pi \rd g\rd \eta \\
&\leq \frac{4}{\epsilon^2\{\log(1+j)\}^2}\iint F h_i^2 \pi \rd g\rd \eta\iint (g+1)^{2\epsilon} F h_i^2 \pi \rd g\rd \eta,
\end{align*}
where the second inequality follows from \eqref{inequality.1-kj2}.
Note the integral $\int (g+1)^{2\epsilon} F \pi \rd g$ is integrable when
\begin{align*}
 p/2+a+1-2\epsilon>0
\end{align*}
which is guaranteed by the assumption.
By Part \ref{lem.cv.3} of Lemma \ref{lem.cv}, there exists a positive constant $q_4$ such that
\begin{equation}
\frac{\iint (g+1)^{2\epsilon} F h_i^2 \pi \rd g\rd \eta}{\iint F h_i^2 \pi \rd g\rd \eta} 
\leq q_4 (w+1)^{2\epsilon} 
\end{equation}
which implies that
\begin{equation}
 \mcA_3(\psi)\leq 
\frac{4q_4(w+1)^{2\epsilon}}{\epsilon^2\{\log(1+j)\}^2}
\iint \psi^2(g)F h_i^2\pi k_j^2 \rd g\rd \eta.
\end{equation}
Note, for $0<2\epsilon<1$, we have 
\begin{align*}
 (w+1)^{2\epsilon}\leq w^{2\epsilon}+1.
\end{align*}
Then, by Lemma \ref{LEM:main} as well as Part \ref{lem.hi.02} of Lemma \ref{lem.hi}, we have
\begin{align*}
& 
\iint \frac{\|x\|^2s^{n/2-1}}{(\|x\|^2/s)^m} \mcA_3(\psi;\pi;i,j)\rd x\rd s  \\ 
&\leq \frac{i}{\{\log(1+j)\}^2}\frac{8q_4}{\epsilon^2}
\left(q_3(m-2\epsilon)\int (g+1)^{-m+2\epsilon+1}\psi^2(g)\pi(g)\rd g\right. \\
&\qquad \left. +q_3(m)\int (g+1)^{-m+1}\psi^2(g)\pi(g)\rd g\right) \\
&\leq \frac{i}{\{\log(1+j)\}^2}\frac{8q_4\{q_3(m-2\epsilon)+q_3(m)\}
}{\epsilon^2}
B(a+m+2\alpha-2\epsilon,b+1),
\end{align*}
which completes the proof.

\section{Proof of Lemma \ref{thm:bcd}}
\label{sec:thm:bcd}
\subsection{$D$ for $b=0$}
Recall $\pi(g)\equiv 1$ for $a=-2$ and $b=0$ and
\begin{align*}
 \mathcal{D}(\pi;i,j)&=\left(\frac{w+1}{w}\right)^2\left(
\frac{\int F(0,\eta;w,s)\rd \eta}{\iint  F(g,\eta;w,s)\rd g\rd \eta}- 
\frac{\int F(0,\eta;w,s)h_i^2\rd \eta}{\iint  F(g,\eta;w,s)h_i^2k_j^2\rd g\rd \eta}
\right)^2 \\
&\quad\times \iint  F(g,\eta;w,s)h_i^2k_j^2\rd g\rd \eta.
\end{align*}
By \eqref{eq.4}, we have 
\begin{align}
&\mathcal{D}(\pi;i,j) \label{D.0}\\
&\leq 2\left(1+1/w^2\right)
\left\{ \left(\frac{\int F(0,\eta;w,s)\rd \eta}{\iint  F(g,\eta;w,s)\rd g\rd \eta}- \frac{\int F(0,\eta;w,s)h_i^2\rd \eta}{\iint  F(g,\eta;w,s)h_i^2\rd g\rd \eta}
\right)^2  \right. \notag\\
&\qquad\left. 
+\left(\frac{\int F(0,\eta;w,s)h_i^2\rd \eta}{\iint  F(g,\eta;w,s)h_i^2\rd g\rd \eta}- \frac{\int F(0,\eta;w,s)h_i^2\rd \eta}{\iint  F(g,\eta;w,s)h_i^2k_j^2\rd g\rd \eta}
\right)^2
\right\}\notag\\
&\quad\times \iint  F(g,\eta;w,s)h_i^2 k_j^2 \rd g\rd \eta. \notag
\end{align}
In \eqref{D.0}, we have
\begin{align}
&\left(\frac{\int F(0,\eta;w,s)\rd \eta}{\iint  F(g,\eta;w,s)\rd g\rd \eta}- \frac{\int F(0,\eta;w,s)h_i^2\rd \eta}{\iint  F(g,\eta;w,s)h_i^2\rd g\rd \eta}
\right)^2 \label{db0.1}\\
&\leq  
\left(\int F(0,\eta;w,s)\left|\frac{1}{\iint  F\rd g\rd \eta}-\frac{h_i^2}{\iint  Fh_i^2\rd g\rd \eta}\right|\rd \eta\right)^2\notag
\end{align}
and
\begin{align}
& \left(\frac{\int F(0,\eta;w,s)h_i^2\rd \eta}{\iint  F(g,\eta;w,s)h_i^2\rd g\rd \eta}- \frac{\int F(0,\eta;w,s)h_i^2\rd \eta}{\iint  F(g,\eta;w,s)h_i^2k_j^2\rd g\rd \eta}
\right)^2 \label{db0.2}\\
&= \left(\frac{\int F(0,\eta;w,s)h_i^2\rd \eta\iint  F(g,\eta;w,s)h_i^2(1-k_j^2)\rd g\rd \eta}{\iint  F(g,\eta;w,s)h_i^2\rd g\rd \eta\iint  F(g,\eta;w,s)h_i^2k_j^2\rd g\rd \eta}\right)^2\notag.
\end{align}
Applying the following inequality to \eqref{db0.1}, 
\begin{align}
 F(0,\eta;w,s)
&=\eta^{p/2+n/2}\exp\left(-\frac{\eta s}{2}(w+1)\right) \label{aa1.1} \\
&=\eta^{p/2+n/2}\exp\left(-\frac{\eta s}{2}(w+1)\right)\frac{\int (g+1)^{-p/2-2}\rd g}{\int (g+1)^{-p/2-2}\rd g} \notag \\
&\leq\frac{\eta^{p/2+n/2}\exp(-\eta s(w+1)/2)}{\int (g+1)^{-p/2-2}\rd g} 
\int \frac{1}{(g+1)^{p/2+2}}\exp\left(\frac{\eta s}{2}w\frac{g}{g+1}\right)\rd g \notag \\
&=\frac{\int (g+1)^{-2} F(g,\eta;w,s)\rd g}{\int (g+1)^{-p/2-2}\rd g}.\notag 
\end{align}
and the following inequality to \eqref{db0.2}, 
\begin{align}\label{aa2.0}
 F(0,\eta;w,s)
\leq \frac{\int (g+1)^{-2} F(g,\eta;w,s)\pi(g)k_j^2(g)\rd g}{\int (g+1)^{-p/2-2}k_j^2(g)\rd g},
\end{align}
 we have 
\begin{align*}
\mathcal{D}(\pi;i,j)\leq   \frac{2(1+1/w^2)}{\{\int (g+1)^{-p/2-2}k_1^2(g)\rd g\}^2}
\left\{\mcA_1((g+1)^{-2})+\mcA_3((g+1)^{-2})\right\}.
\end{align*}
In Lemma \ref{thm:a1a2a3}, we set $\alpha=2$, $m=0$ or $2$, $\epsilon=1/8$ as well as $a=-2$ and $b=0$.
There exist positive constants $\tilde{\mathcal{D}}_1$ and $\tilde{\mathcal{D}}_2$, both independent of $i$ and $j$, such that
\begin{align*}
 \iint \|x\|^2s^{n/2-1}\mathcal{D}(\pi;i,j)\rd x\rd s \leq \tilde{\mathcal{D}}_1+\frac{i}{\{\log(1+j)\}^2}\tilde{\mathcal{D}}_2.
\end{align*}

\subsection{$D$ for $b>0$}
When $b>0$, we have
\begin{align*}
&\mathcal{D}(\pi;i,j) \\&=\frac{b^2}{w^2}\left(
\begin{gathered}
\frac{\iint \left(\dfrac{1}{g}+\dfrac{w}{g(g+1)}\right) F\pi\rd g\rd \eta}{\iint  F\pi\rd g\rd \eta}
-\frac{\iint \left(\dfrac{1}{g}+\dfrac{w}{g(g+1)}\right) Fh_i^2\pi k_j^2\rd g\rd \eta}{\iint  Fh_i^2\pi k_j^2\rd g\rd \eta}
\end{gathered}
\right)^2 
\\
&\quad\times 
\iint  F h_i^2 \pi k_j^2 \rd g\rd \eta .
\end{align*}
By \eqref{eq.4} and Lemma \ref{lem:g->g+1} below, we have 
\begin{align*}
\mathcal{D}(\pi;i,j)&
\leq 6b^2\left\{\frac{\mathcal{A}_1(1/g)}{w^2}+\mathcal{A}_1(1/\{g(g+1)\})+
\frac{\mathcal{A}_2(1/g)}{w^2} \right. \\ &\qquad \left. 
+\mathcal{A}_2(1/\{g(g+1)\})+\frac{\mathcal{A}_3(1/g)}{w^2}+\mathcal{A}_3(1/\{g(g+1)\})\right\} \\
&\leq 6b^2r_*^2\left\{\frac{\mathcal{A}_1(1/(g+1))}{w^2}+\mathcal{A}_1(1/(g+1)^2)+
\frac{\mathcal{A}_2(1/(g+1))}{w^2} \right. \\ &\qquad \left. 
+\mathcal{A}_2(1/(g+1)^2)+\frac{\mathcal{A}_3(1/(g+1))}{w^2}+\mathcal{A}_3(1/(g+1)^2)\right\},
\end{align*}
where $r_*=\max(r_1,r_2,r_3) $ defined in Lemma \ref{lem:g->g+1} below.

In Lemma \ref{thm:a1a2a3}, we set $\alpha=2$, $m=0$ or $2$, $\epsilon=1/8$ as well as $a=-2$ and $b>0$.
Then there exist positive constants $\tilde{\mathcal{D}}_1$ and $\tilde{\mathcal{D}}_2$ both independent of $i$ and $j$ such that
\begin{align*}
 \iint \|x\|^2s^{n/2-1}\mathcal{D}(\pi;i,j)\rd x\rd s \leq \tilde{\mathcal{D}}_1+\frac{i}{\{\log(1+j)\}^2}\tilde{\mathcal{D}}_2.
\end{align*}

\begin{lemma}\label{lem:g->g+1}
\begin{align*}
 \int \frac{F\pi\rd g}{g}&\leq r_1\int \frac{F\pi\rd g}{g+1}, &\quad
 \int \frac{F\pi\rd g}{g(g+1)}&\leq r_1\int \frac{F\pi\rd g}{(g+1)^2}, \\
 \int \frac{F\pi(1-k_j^2)\rd g}{g}&\leq r_2\int \frac{F\pi(1-k_j^2)\rd g}{g+1}, &\quad
 \int \frac{F\pi(1-k_j^2)\rd g}{g(g+1)}&\leq r_2\int \frac{F\pi(1-k_j^2)\rd g}{(g+1)^2}, \\
 \int \frac{F\pi k_j^2\rd g}{g}&\leq r_3\int \frac{F\pi k_j^2\rd g}{g+1}, & \quad
 \int \frac{F\pi k_j^2\rd g}{g(g+1)}&\leq r_3\int \frac{F\pi k_j^2\rd g}{(g+1)^2} ,
\end{align*}
where
\begin{align*}
r_1&=\frac{\int \{(g+1)/g\}\pi(g)(g+1)^{-p/2-2}\rd g}{\int \pi(g)(g+1)^{-p/2-2}\rd g}, \\
 r_2&=\max_j\frac{\int \{(g+1)/g\}\pi(g)\{1-k_j^2(g)\}(g+1)^{-p/2-2}\rd g}{\int \pi(g)\{1-k_j^2(g)\}(g+1)^{-p/2-2}\rd g},  \\
r_3&=\max_j\frac{\int \{(g+1)/g\}\pi(g)k_j^2(g)(g+1)^{-p/2-2}\rd g}{\int \pi(g)k_j^2(g)(g+1)^{-p/2-2}\rd g}.
\end{align*}
\end{lemma}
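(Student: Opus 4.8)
The plan is to reduce all six inequalities to a single observation about how the factor $g/(g+1)$ interacts with weighted integrals of the form $\int (g+1)^{-2} F(g,\eta;w,s)\,\mu(g)\,\rd g$, where $\mu$ is one of the three nonnegative weights $\pi(g)$, $\pi(g)\{1-k_j^2(g)\}$, $\pi(g)k_j^2(g)$. Writing $F\pi/g = \{(g+1)/g\}\cdot(g+1)^{-1}F\pi$ and $F\pi/\{g(g+1)\} = \{(g+1)/g\}\cdot(g+1)^{-2}F\pi$, each claimed bound is of the form $\int \{(g+1)/g\}\,\rho(g,\eta)\,\rd g \le r\int \rho(g,\eta)\,\rd g$ for an appropriate nonnegative integrand $\rho$ and a constant $r$ independent of $\eta$, $w$, $s$ (and, for $r_2$ and $r_3$, uniform in $j$). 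So the whole lemma follows once we show that for each of the three weights, the ratio of the $\{(g+1)/g\}$-tilted integral to the untilted one, with integrand $(g+1)^{-\beta}F(g,\eta;w,s)\mu(g)$, is maximized over $(w,s,\eta)$ at a value we can identify.

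The key step is the following monotonicity-in-the-mixing-measure argument. For fixed $\beta\in\{1,2\}$, regard $(g+1)^{-\beta}F(g,\eta;w,s)\mu(g)\,\rd g$ as an (unnormalized) measure on $g\in(0,\infty)$. As a function of $g$, the factor $(g+1)/g$ is strictly decreasing, while—crucially—the ratio of the $F$-factor at parameters $(\eta,w,s)$ to the $F$-factor at the ``baseline'' parameters used in the definitions of $r_1,r_2,r_3$ (namely $F(g,\eta;0,0)$-type weighting, which collapses $F$ to the pure $(g+1)^{-p/2}$ power times an $\eta$-dependent constant) is monotone in $g$. Indeed from \eqref{EQ:F}, $F(g,\eta;w,s) = \eta^{p/2+n/2}(g+1)^{-p/2}\exp(-\eta s/2)\exp\{-(\eta s w/2)/(g+1)\}$, so the $(w,s)$-dependent part is $\exp\{-(\eta s w/2)/(g+1)\}$, which is increasing in $g$. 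Hence the measure $(g+1)^{-\beta}F(g,\eta;w,s)\mu(g)\,\rd g$ is obtained from the baseline measure $(g+1)^{-\beta-p/2}\mu(g)\,\rd g$ by reweighting with an increasing function of $g$, i.e. it stochastically dominates the baseline; since $(g+1)/g$ is decreasing, the expectation of $(g+1)/g$ under the tilted/reweighted measure is \emph{no larger} than under the baseline measure. This gives
\[
\frac{\int \{(g+1)/g\}(g+1)^{-\beta}F\mu\,\rd g}{\int (g+1)^{-\beta}F\mu\,\rd g}\le \frac{\int \{(g+1)/g\}(g+1)^{-\beta-p/2}\mu\,\rd g}{\int (g+1)^{-\beta-p/2}\mu\,\rd g},
\]
and the right-hand side is exactly $r_1$ (when $\mu=\pi$), $r_2$ with the $j$ fixed (when $\mu=\pi\{1-k_j^2\}$), or $r_3$ with $j$ fixed (when $\mu=\pi k_j^2$); taking the max over $j$ in the latter two cases yields the stated constants. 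One must also check these baseline ratios are finite: this holds because $a=-2$, $b\ge0$ forces $\pi(0)<\infty$ (noted at the start of Section \ref{sec:CASEII}), so $\{(g+1)/g\}\pi(g)(g+1)^{-p/2-\beta}\sim \pi(0)/g$ near $0$ is integrable for $\beta\ge1$ provided $p\ge 1$; and the $1-k_j^2$ and $k_j^2$ weights only improve integrability at $0$ and at $\infty$, uniformly in $j$ (using part \ref{lem:k_j.2.5} of Lemma \ref{lem:k_j}).

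The main obstacle I anticipate is making the ``reweighting by an increasing function decreases the mean of a decreasing function'' step fully rigorous and checking it really applies with the $\mu=\pi\{1-k_j^2\}$ and $\mu=\pi k_j^2$ weights uniformly in $j$: one needs the FKG/Chebyshev-type correlation inequality
\[
\Bigl(\int \!u\,\rd\nu\Bigr)\Bigl(\int\! v\,\rd\nu\Bigr)\le \Bigl(\int\! uv\,\rd\nu\Bigr)\Bigl(\int\! 1\,\rd\nu\Bigr)
\]
for a probability-proportional measure $\nu$ and comonotone $u,v$, applied with $u(g)=g/(g+1)$ increasing, $v(g)=\exp\{-(\eta s w/2)/(g+1)\}$ increasing, and $\rd\nu\propto(g+1)^{-\beta-p/2}\mu(g)\,\rd g$; rearranging gives precisely the displayed ratio bound. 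The only subtlety is that $r_2$ and $r_3$ are defined as suprema over $j$, so one must confirm these suprema are finite—this follows since $k_j^2(g)\uparrow 1$ and $1-k_j^2(g)$ is bounded by an arbitrarily small power $(g+1)^\epsilon$ (inequality \eqref{inequality.1-kj2}) times $1/\log(1+j)$, so the baseline ratios are bounded by a fixed finite constant for all large $j$, and the finitely many remaining $j$ contribute finite values. With the six inequalities in hand, the bounds for $\mathcal D$ in the $b>0$ case follow by substituting into the displayed chain that invokes Lemma \ref{thm:a1a2a3}, exactly as written.
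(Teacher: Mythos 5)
Your argument is essentially the paper's: both rest on the covariance (Chebyshev correlation) inequality applied to the decreasing function $(g+1)/g$ and the increasing function $\exp\{-\eta s w/(2(g+1))\}$ with respect to the baseline measure $(g+1)^{-p/2-\beta}\mu(g)\,\rd g$, $\mu\in\{\pi,\pi(1-k_j^2),\pi k_j^2\}$, and your prose formulation (``reweighting by an increasing function decreases the mean of a decreasing function'') is exactly the right mechanism. Two small points. First, your displayed correlation inequality is stated for the comonotone pair $u(g)=g/(g+1)$, $v$ increasing, which rearranges to a lower bound on $\int uv\,\rd\nu/\int v\,\rd\nu$; what you actually need (and what your prose correctly describes) is the reversed inequality for the anti-comonotone pair $(g+1)/g$ decreasing against $v$ increasing, so the display should be flipped. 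Second, for the first-column inequalities ($\beta=1$) your argument produces the ratio with baseline exponent $-p/2-1$, whereas $r_1,r_2,r_3$ are all defined with exponent $-p/2-2$; the claim that ``the right-hand side is exactly $r_1$'' is therefore not quite right, and one further application of the covariance inequality (with $(g+1)/g$ decreasing and $g+1$ increasing) is needed to show the $-p/2-1$ ratio is bounded by the $-p/2-2$ ratio. The paper includes exactly this extra step. Your remarks on finiteness of the suprema defining $r_2$ and $r_3$ go beyond what the paper writes down and are a welcome addition.
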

\begin{proof}
We only prove two inequalities in the first line. By the covariance inequality, we have
\begin{align*}
& \int \frac{1}{g}\frac{\pi(g)}{(g+1)^{p/2}}
\exp\left(-\frac{\eta s}{g+1}\frac{w}{2}\right)\rd g \\
& =\int \frac{g+1}{g}\frac{\pi(g)}{(g+1)^{p/2+1}}
\exp\left(-\frac{\eta s}{g+1}\frac{w}{2}\right)\rd g \\
&\leq \frac{\int \{(g+1)/g\}\pi(g)(g+1)^{-p/2-1}\rd g}{\int \pi(g)(g+1)^{-p/2-1}\rd g}
 \int \frac{1}{g+1}\frac{\pi(g)}{(g+1)^{p/2}}
\exp\left(-\frac{\eta s}{g+1}\frac{w}{2}\right)\rd g. 
\end{align*}
where the inequality follows from the fact $ (g+1)/g$ is decreasing and 
$ \exp(-\eta s w/\{2(g+1)\})$ is increasing. Similarly we have
\begin{align*}
 & \int \frac{1}{g(g+1)}\frac{\pi(g)}{(g+1)^{p/2}}
\exp\left(-\frac{\eta s}{g+1}\frac{w}{2}\right)\rd g \\
&\leq \frac{\int \{(g+1)/g\}\pi(g)(g+1)^{-p/2-2}\rd g}{\int \pi(g)(g+1)^{-p/2-2}\rd g} 
 \int \frac{1}{(g+1)^2}\frac{\pi(g)}{(g+1)^{p/2}}
\exp\left(-\frac{\eta s}{g+1}\frac{w}{2}\right)\rd g.
\end{align*}
Since $ (g+1)/g$ is decreasing and $g+1$ is increasing, 
\begin{align*}
 \frac{\int \{(g+1)/g\}\pi(g)(g+1)^{-p/2-1}\rd g}{\int \pi(g)(g+1)^{-p/2-1}\rd g} 
& =\frac{\int \{(g+1)/g\}(g+1)\pi(g)(g+1)^{-p/2-2}\rd g}{\int (g+1)\pi(g)(g+1)^{-p/2-2}\rd g} \\
&<\frac{\int \{(g+1)/g\}\pi(g)(g+1)^{-p/2-2}\rd g}{\int \pi(g)(g+1)^{-p/2-2}\rd g} \\
&=r_1,
\end{align*}
which completes the proof of two inequalities in the first line.
\end{proof}

\section{Lemmas and Proofs}
\subsection{the sequence}
\label{sec:seq}
\begin{lemma}\label{lem.hi}
Let
\begin{equation}
 h_i(\eta)=\frac{i}{i+|\log\eta|}.
\end{equation}
\begin{enumerate}
\item 
\label{lem.hi.01}$h_i(\eta)$ is increasing in $i$ and 
\begin{align*}
 \lim_{i\to\infty}h_i(\eta)=1 \text{ for all }\eta>0.
\end{align*}
\item \label{lem.hi.02} $ \dps\int_0^\infty \eta^{-1}h_i^2(\eta)\rd \eta=2i$.
\item \label{lem.hi.03} $\dps \int_0^\infty \eta \{h'_i(\eta)\}^2\rd \eta\leq \frac{2}{i}$.
\end{enumerate}
\end{lemma}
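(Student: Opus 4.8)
The plan is to reduce every assertion to elementary one-variable calculus by the substitution $u=\log\eta$, under which $\eta^{-1}\rd\eta$ on $(0,\infty)$ becomes Lebesgue measure $\rd u$ on $\mathbb{R}$ and $h_i$ depends on $\eta$ only through $|u|=|\log\eta|$. Part \ref{lem.hi.01} is then immediate: writing $h_i(\eta)=\bigl(1+|\log\eta|/i\bigr)^{-1}$, for each fixed $\eta>0$ the number $|\log\eta|/i$ is nonnegative and strictly decreasing in $i$, so $h_i(\eta)$ is increasing in $i$ and converges to $(1+0)^{-1}=1$ as $i\to\infty$.

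For part \ref{lem.hi.02}, the substitution gives
\[
\int_0^\infty\frac{h_i^2(\eta)}{\eta}\rd\eta=\int_{-\infty}^\infty\frac{i^2}{(i+|u|)^2}\rd u=2i^2\int_0^\infty\frac{\rd u}{(i+u)^2}=2i^2\cdot\frac1i=2i.
\]

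For part \ref{lem.hi.03}, I would first record that $h_i$ is differentiable on $(0,1)\cup(1,\infty)$, with
\[
h_i'(\eta)=-\frac{i\,\mathrm{sgn}(\log\eta)}{\eta\,(i+|\log\eta|)^2},\qquad\text{hence}\qquad\eta\{h_i'(\eta)\}^2=\frac{i^2}{\eta\,(i+|\log\eta|)^4},
\]
and the same substitution then yields
\[
\int_0^\infty\eta\{h_i'(\eta)\}^2\rd\eta=2i^2\int_0^\infty\frac{\rd u}{(i+u)^4}=\frac{2i^2}{3i^3}=\frac{2}{3i}\le\frac2i.
\]

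I do not expect any genuine obstacle here; the one point deserving a sentence is that $|\log\eta|$, and hence $h_i$, fails to be differentiable at $\eta=1$. This is harmless because $\{\eta=1\}$ is Lebesgue-null and $h_i$ is Lipschitz, hence absolutely continuous, so the displayed $L^2$ quantity is exactly the object that enters the later integration-by-parts estimates, such as \eqref{product.b}, \eqref{product.c} and the bounds in \eqref{product.bc.0}.
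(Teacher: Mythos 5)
Your proof is correct and follows essentially the same elementary route as the paper (direct computation of the integrals, here organized via the substitution $u=\log\eta$). The only difference is cosmetic: in part \ref{lem.hi.03} you evaluate the integral exactly as $2/(3i)$, whereas the paper first bounds $i^2/(i+|\log\eta|)^2\leq 1$ and obtains $2/i$; both suffice for the later estimates.
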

\begin{proof}\mbox{}
 [Part \ref{lem.hi.01}] This part is straightforward given the form of $h_i( \eta )$.

[Part \ref{lem.hi.02}] The results follow from the integrals,
\begin{equation}\label{2i2i}
 \begin{split}
 \int_0^\infty \eta^{-1}h_i^2(\eta)\rd \eta&=
 \int_0^1 \frac{i^2\rd\eta}{\eta \{i+\log (1/\eta)\}^2}
+\int_1^\infty \frac{i^2\rd\eta}{\eta \{i+\log \eta\}^2}  \\
&= \left[\frac{i^2}{i+\log (1/\eta)}\right]_0^1+\left[-\frac{i^2}{i+\log \eta}\right]_1^\infty=
 2i.
\end{split}
\end{equation}
[Part \ref{lem.hi.03}] Note
\begin{align*}
 h'_i(\eta)=
\begin{cases}
\dps \frac{i}{\eta\{i+\log (1/\eta)\}^2} & 0<\eta<1, \\
\dps -\frac{i}{\eta\{i+\log \eta\}^2} & \eta\geq 1.
\end{cases}
\end{align*}
 Hence
\begin{align*}
 \{h'_i(\eta)\}^2
&=\begin{cases}
\dps \frac{i^2}{\eta^2\{i+\log (1/\eta)\}^4} & 0<\eta<1 \\
\dps -\frac{i^2}{\eta^2\{i+\log \eta\}^4} & \eta\geq 1
\end{cases} \\
&\leq 
\begin{cases}
\dps \frac{1}{\eta^2\{i+\log (1/\eta)\}^2} & 0<\eta<1, \\
\dps \frac{1}{\eta^2\{i+\log \eta\}^2} &  \eta\geq 1.
\end{cases}
\end{align*}
Then
\begin{align*}
 \int_0^\infty \eta \{h'_i(\eta)\}^2\rd \eta\leq 
 \int_0^1 \frac{\rd\eta}{\eta \{i+\log (1/\eta)\}^2}
+\int_1^\infty \frac{\rd\eta}{\eta \{i+\log \eta\}^2}=\frac{2}{i}.
\end{align*}
\end{proof}

\begin{lemma}\label{lem:k_j}
Let
\begin{equation}
 k_j( g )=1-\frac{\log( g +1)}{\log( g +1+j)}.
\end{equation}
  \begin{enumerate}
   \item \label{lem:k_j.1}
$k_j( g )$ is increasing in $j$ for fixed $ g $, and decreasing in $ g $ for fixed $j$. Further
$\lim_{j\to\infty}k_j( g )=1$ for fixed $ g \geq 0$.
   \item \label{lem:k_j.2}
	 For fixed $j\geq 1$,
\begin{align*}
k_j( g )\leq
 \frac{(1+j)\log(1+j)}{( g +1+j) \log( g +1+j)}.
\end{align*}
   \item \label{lem:k_j.2.5} Let $\pi(g)=(g+1)^{-a-2}\{g/(g+1)\}^b$ for $a\geq -2$ and $b>-1$.
Then\begin{align*}
 \int_0^\infty \pi(g)k_j^2(g)\rd g\leq \frac{1}{b+1}+\max(1,2^{-b})(1+j).
\end{align*}
   \item \label{lem:k_j.3}
\begin{align*}
 \int_0^\infty \frac{k_j^2(g)}{g+1}\rd g\leq 2\log (1+j).
\end{align*}
   \item \label{lem:k_j.4}
	 For $ g \geq 0$,
\begin{align*}
 k'_j( g )=-\frac{j/(g+1)+k_j( g )}{( g +1+j) \log( g +1+j)}
\end{align*}
   \item \label{lem:k_j.5}
\begin{align*}
 \int_0^\infty (g+1)\{k'_j(g)\}^2\rd g\leq \frac{5}{\log (1+j)}.
\end{align*}
\end{enumerate}
 \end{lemma}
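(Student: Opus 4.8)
The plan is to prove the six parts in dependency order, working throughout from the two equivalent closed forms
\begin{equation*}
k_j(g) = 1 - \frac{\log(g+1)}{\log(g+1+j)} = \frac{\log\bigl(1 + j/(g+1)\bigr)}{\log(g+1+j)}.
\end{equation*}
Part \ref{lem:k_j.1} is then nearly immediate: differentiating the first form in $j$ gives $\partial k_j/\partial j = \log(g+1)\big/\{(g+1+j)(\log(g+1+j))^2\}\ge 0$ for $g\ge 0$; the second form displays $k_j$ as a ratio of a positive decreasing numerator and a positive increasing denominator, hence decreasing in $g$; and $k_j(g)\to 1$ as $j\to\infty$ since $\log(g+1+j)\to\infty$. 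Part \ref{lem:k_j.4} is a direct computation: the quotient rule yields
\begin{equation*}
k'_j(g) = -\frac{1}{(\log(g+1+j))^2}\left(\frac{\log(g+1+j)}{g+1} - \frac{\log(g+1)}{g+1+j}\right),
\end{equation*}
and substituting the identity $(g+1+j)\log(g+1+j) - (g+1)\log(g+1) = \log(g+1+j)\{j + (g+1)k_j(g)\}$ into the bracket (after putting it over the common denominator $(g+1)(g+1+j)$) gives exactly the asserted formula.

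The linchpin is Part \ref{lem:k_j.2}. Set $G(g) := (g+1+j)\log(g+1+j)\,k_j(g) = (g+1+j)\log\bigl(1+j/(g+1)\bigr)$; with $x = j/(g+1)\ge 0$ one finds $G'(g) = \log(1+x) - x \le 0$, so $G$ is decreasing and $G(g)\le G(0) = (1+j)\log(1+j)$, which is precisely the bound. Parts \ref{lem:k_j.2.5} and \ref{lem:k_j.3} follow by splitting the range of integration: near the origin one uses $0\le k_j\le 1$ together with an elementary power bound on $\pi(g)$ (for \ref{lem:k_j.2.5}, noting $(g+1)^{-a-2}\le 1$ since $a\ge -2$), producing the $(b+1)^{-1}$ term; on the tail one inserts the decay bound of Part \ref{lem:k_j.2} and is left with one-dimensional integrals such as $\int(g+1+j)^{-2}(\log(g+1+j))^{-2}\rd g$ (for \ref{lem:k_j.2.5}), or, for \ref{lem:k_j.3}, substitutes $u=\log(g+1)$ and exploits $\log(1+je^{-u})\le je^{-u}$ on $u>\log(1+j)$. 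Finally, Part \ref{lem:k_j.5}: insert the formula of Part \ref{lem:k_j.4}, bound $(j/(g+1)+k_j(g))^2 \le 2j^2/(g+1)^2 + 2$ using $k_j\le 1$, and reduce to
\begin{equation*}
\int_0^\infty \frac{2(g+1)\,\rd g}{(g+1+j)^2(\log(g+1+j))^2}\le \frac{2}{\log(1+j)},\qquad
\int_0^\infty \frac{2j^2\,\rd g}{(g+1)(g+1+j)^2(\log(g+1+j))^2}\le \frac{2}{\log(1+j)},
\end{equation*}
the first via $(g+1)/(g+1+j)\le 1$ and the substitution $v=\log(g+1+j)$, the second via $\log(g+1+j)\ge\log(1+j)$ and $j^2/(g+1)(g+1+j)^2 \le \{1/(g+1) - 1/(g+1+j)\}$; summing gives the constant $5$ with room to spare.

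The routine parts are the calculus behind Parts \ref{lem:k_j.1} and \ref{lem:k_j.4} and, once Part \ref{lem:k_j.2} is in hand, the reduction of Parts \ref{lem:k_j.2.5}, \ref{lem:k_j.3} and \ref{lem:k_j.5} to elementary integrals. The one place that needs genuine care is controlling the factors $\log(g+1+j)$ inside the tail integrals of Parts \ref{lem:k_j.3} and \ref{lem:k_j.5} so that the bounds emerge with the stated \emph{explicit} constants uniformly in $j\ge 1$ — in particular at small $j$, where crude splittings degrade and a few small values may have to be checked directly; the substitutions $u=\log(g+1)$ and $v=\log(g+1+j)$ are what make these estimates clean. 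I expect this bookkeeping, rather than any conceptual difficulty, to be the main obstacle.
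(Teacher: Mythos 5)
Your proposal is correct and follows essentially the same route as the paper: the decay bound of Part \ref{lem:k_j.2} is the paper's $\zeta$-function monotonicity argument in disguise (your $G(g)=(g+1+j)\log(1+j/(g+1))$ is exactly the paper's $j\zeta(j/(g+1+j))$), and Parts \ref{lem:k_j.2.5}, \ref{lem:k_j.3} and \ref{lem:k_j.5} are then obtained by the same split-and-estimate bookkeeping. The only noteworthy difference is in Part \ref{lem:k_j.5}, where your telescoping bound $j^2/\{(g+1)(g+1+j)^2\}\le 1/(g+1)-1/(g+1+j)$ avoids the paper's split at $g=j$ and yields the constant $4$ rather than the paper's $4.5$; both are comfortably below $5$.
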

\begin{proof}\mbox{}
 [Part \ref{lem:k_j.1}] This part is straightforward given the form of $k_j( g )$.

 [Part \ref{lem:k_j.2}] The function $k_j( g )$ is rewritten as
 \begin{align*}
 k_j( g )
=\frac{j\zeta(j/( g +1+j))}{( g +1+j) \log( g +1+j)},
 \end{align*}
where $\zeta(x)=-\log(1-x)/x=1+\sum_{l=1}^\infty x^l/(l+1)$ which is increasing in $x$.
 Hence
 \begin{align*}
  k_j( g )\leq
  \frac{j\zeta(j/(1+j))}{( g +1+j) \log( g +1+j)}=\frac{(1+j)\log(1+j)}{( g +1+j) \log( g +1+j)}.
 \end{align*}

[Part \ref{lem:k_j.2.5}]
By Part \ref{lem:k_j.2},
\begin{align*}
 \int_0^\infty k_j^2( g )\rd  g  
&\leq 
 \int_0^\infty 
\frac{(1+j)^2\{\log(1+j)\}^2}{( g +1+j)^2 \{\log( g +1+j)\}^2}\rd  g  \\
&\leq  \int_0^\infty 
\frac{(1+j)^2}{( g +1+j)^2}\rd  g  \\
&=  1+j.
\end{align*}
Under the condition, we have $\pi(g)\leq \{g/(g+1)\}^b$. 
When $b\geq 0$, we have $\pi(g)\leq 1$ and hence
\begin{align*}
 \int_0^\infty \pi(g)k_j^2( g )\rd  g  \leq \int_0^\infty k_j^2( g )\rd  g  \leq 1+j.
\end{align*}
When $-1<b<0$, 
\begin{align*}
\left(\frac{g}{g+1}\right)^b&\leq \left(1+ g^b\right) I_{(0,1)}(g)+2^{-b}I_{(1,\infty)}(g) \\
&\leq g^b I_{(0,1)}(g) +2^{-b}I_{(0,\infty)}(g). 
\end{align*}
Note
\begin{align*}
  \int_0^1 \pi(g)k_j^2( g )\rd  g\leq \int_0^1 g^{b}\rd g=\frac{1}{b+1}.
\end{align*} 
Then the result follows.

[Part \ref{lem:k_j.3}]
Note $k_j^2\leq 1$  by definition. Hence
\begin{align*}
 \int_0^j \frac{k_j^2( g )}{ g +1}\rd  g \leq 
\int_0^j \frac{1}{ g +1}\rd  g =\log( 1+j).
\end{align*}
Also, by Part \ref{lem:k_j.2},
\begin{align*}
 \int_j^\infty \frac{k_j^2( g )}{( g +1)}\rd  g  
&\leq 
 \int_j^\infty 
\frac{(1+j)^2\{\log(1+j)\}^2}{( g +1)( g +1+j)^2 \{\log( g +1+j)\}^2}\rd  g  \\
&\leq  \int_j^\infty 
\frac{\{\log(1+j)\}^2}{( g +1+j) \{\log( g +1+j)\}^2}\rd  g  \\
&=  \frac{\{\log(1+j)\}^2}{\log(1+2j)} \\
&\leq \log(1+j).
\end{align*}
Then the result follows.

[Part \ref{lem:k_j.4}] The derivative is 
 \begin{equation}\label{h_deri}
  k'_j( g )=-\frac{1}{( g +1)\log( g +1+j)} 
   +\frac{\log( g +1)}{( g +1+j)\{\log( g +1+j)\}^2}.
 \end{equation}
 Then
\begin{align*}
\log( g +1+j) k'_j( g ) 
 &=-\frac{1}{ g +1}+\frac{\log( g +1)}{( g +1+j)\log( g +1+j)}\\
&=-\frac{1}{ g +1}+\frac{1}{ g +1+j}\left\{1-k_j( g )\right\}\\
&=-\frac{j}{( g +1)( g +1+j)}-\frac{k_j( g )}{ g +1+j}.
\end{align*}

[Part \ref{lem:k_j.5}]
By Part \ref{lem:k_j.4}, we have
\begin{align*}
&(g+1) \{k'_j( g )\}^2 \\ &\leq 2\left( \frac{j^2}{(g+1)(g +1+j)^2\{\log(g +1+j)\}^2 }
+ \frac{(g+1)k_j^2(g)}{(g +1+j)^2\{\log(g +1+j)\}^2 }
\right).
\end{align*}
Then we have
\begin{align*}
\int_0^j \frac{j^2 \ \rd g}{(g+1)(g +1+j)^2\{\log(g +1+j)\}^2 }
&\leq \frac{1}{\{\log(1+j)\}^2}\int_0^j\frac{\rd g}{g+1} \\ &=\frac{1}{\log(1+j)}
\end{align*}
and
\begin{align*}
\int_j^\infty \frac{j^2 \ \rd g}{(g+1)(g +1+j)^2\{\log(g +1+j)\}^2 }
&\leq \frac{1}{4}\int_j^\infty \frac{\rd g}{(g +1)\{\log(g +1)\}^2 } \\ &=\frac{1}{4\log(1+j)}.
\end{align*}
Further, by $0\leq k_j\leq 1$,
\begin{align*}
\int_0^\infty \frac{(g+1)k_j^2(g) \ \rd g}{(g +1+j)^2\{\log(g +1+j)\}^2 }
&\leq \int_0^\infty \frac{\rd g}{(g +1+j)\{\log(g +1+j)\}^2 }\\ &=\frac{1}{\log (1+j)}.
\end{align*}
Then the result follows.
\end{proof}

\subsection{Proof of Lemma \ref{LEM:integral.parts.a=2}}
\label{sec:LEM:integral.parts.a=2}
Lemma \ref{LEM:integral.parts.a=2} follows from the following result.
\begin{lemma}\label{LEM:integral.parts} 
Assume $a=-2$ and $b\geq 0$.
Then 
\begin{align*}
&(n/2+1)w \iint (g+1)^{-1}F(g,\eta;w,s)h_i^2(\eta)\pi(g)k_j^2(g)\rd g\rd \eta \\
& = (p/2-1)\iint  F(g,\eta;w,s)h_i^2(\eta)\pi(g)k_j^2(g)\rd g\rd \eta \\
&\quad - 2w\iint \frac{\eta}{g+1} F(g,\eta;w,s)h_i(\eta)h'_i(\eta)\pi(g)k_j^2(g)\rd g\rd \eta \\
&\quad - 2\iint (g+1+w)F(g,\eta;w,s)h_i^2(\eta)\pi(g)k_j(g)k'_j(g)\rd g\rd \eta \\
&\quad - 
\begin{cases}
\displaystyle b\iint \frac{g+1+w}{g(g+1)}F(g,\eta;w,s)h_i^2(\eta)\pi(g)k_j^2(g)\rd g\rd \eta &b>0, \\
\displaystyle\int (1+w)F(0,\eta;w,s)h_i^2(\eta)\rd \eta & b=0.
\end{cases}
\end{align*}
\end{lemma}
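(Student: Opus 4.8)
The plan is to prove Lemma~\ref{LEM:integral.parts} by integrating by parts twice — once in $g$, once in $\eta$ — and gluing the two steps together with the elementary identity
\begin{equation*}
\eta\,\frac{\partial F}{\partial\eta}=\Bigl(\tfrac p2+\tfrac n2\Bigr)F-\eta c_g F,\qquad c_g:=\frac s2\Bigl(1+\frac{w}{g+1}\Bigr),
\end{equation*}
which is immediate from the explicit form \eqref{EQ:F}, together with $\partial_g\log F=-\tfrac{p/2}{g+1}+\tfrac{\eta s w}{2(g+1)^2}$ and, when $b>0$, $\partial_g\log\pi(g)=\tfrac{b}{g(g+1)}$ (with $\pi\equiv1$, hence no such term, when $b=0$). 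Once Lemma~\ref{LEM:integral.parts} is established, Lemma~\ref{LEM:integral.parts.a=2} follows by dividing through by $\iint F h_i^2\pi k_j^2\,\rd g\,\rd\eta$ and invoking \eqref{phi.ij.original}. Throughout, $\iint$ abbreviates $\int_0^\infty\!\int_0^\infty(\cdot)\,\rd g\,\rd\eta$, with $F=F(g,\eta;w,s)$, $h_i=h_i(\eta)$, $k_j=k_j(g)$, $\pi=\pi(g)$.

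First I would integrate $\partial_g\bigl[(g+1+w)F\pi k_j^2\bigr]$ over $g\in(0,\infty)$, then multiply by $h_i^2$ and integrate over $\eta$. Carrying out the product rule, the two $g$-identities above turn the $g$-derivative into
\begin{align*}
&(1-\tfrac p2)F\pi k_j^2-\frac{pw}{2(g+1)}F\pi k_j^2+\frac{w\,\eta c_g}{g+1}F\pi k_j^2\\
&\qquad{}+b\,\frac{g+1+w}{g(g+1)}F\pi k_j^2+2(g+1+w)F\pi k_j k'_j,
\end{align*}
where the two $F'/F$-contributions $\tfrac{\eta s w}{2(g+1)}F$ and $\tfrac{\eta s w^2}{2(g+1)^2}F$ have collapsed to $\tfrac{w\,\eta c_g}{g+1}F$, and the $\pi'/\pi$- and $(k_j^2)'$-contributions give the last two terms. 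The left side integrates to the boundary value $\bigl[(g+1+w)F\pi k_j^2\bigr]_{g=0}^{g=\infty}$: the $g\to\infty$ end vanishes since $F=O((g+1)^{-p/2})$ with $p\ge3$, while the $g\to0$ end is $0$ when $b>0$ (since $\pi(0)=0$) and $-(1+w)F(0,\eta;w,s)$ when $b=0$ (since then $\pi\equiv1$ and $k_j(0)=1$) — this is precisely the source of the two cases in the statement.

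Next I would use $\eta c_g F=(\tfrac p2+\tfrac n2)F-\eta\,\partial_\eta F$ on the third term above and then integrate by parts in $\eta$: because $\eta h_i^2 F\to0$ as $\eta\to0$ (since $\eta F\sim\eta^{p/2+n/2+1}$ and $h_i\le1$) and as $\eta\to\infty$ (exponential decay of $F$), one has $\int_0^\infty\eta h_i^2\,\partial_\eta F\,\rd\eta=-\int_0^\infty(h_i^2+2\eta h_i h'_i)F\,\rd\eta$, i.e.
\begin{equation*}
-\,w\iint\frac{\eta}{g+1}(\partial_\eta F)h_i^2\pi k_j^2=w\iint\frac{1}{g+1}F h_i^2\pi k_j^2+2w\iint\frac{\eta}{g+1}F h_i h'_i\pi k_j^2.
\end{equation*}
Collecting everything, the coefficient of $\iint\tfrac{w}{g+1}F h_i^2\pi k_j^2$ becomes $-\tfrac p2+(\tfrac p2+\tfrac n2)+1=\tfrac n2+1$, and transposing all the remaining terms (the $(1-\tfrac p2)\iint F h_i^2\pi k_j^2$ becoming $(\tfrac p2-1)\iint F h_i^2\pi k_j^2$, and the $b$-term or $b=0$ boundary becoming the last term) reproduces exactly the right-hand side of Lemma~\ref{LEM:integral.parts}.

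The conceptual content is light; the main work — and the main obstacle — is the bookkeeping: confirming that the product rule in the first step produces exactly the five terms above and that, after the $\eta$-integration by parts, they recombine with no residue. Beyond that one must discharge the usual rigor: the vanishing of the $g\to\infty$ and $\eta\to0,\infty$ boundary terms, the finiteness of all the double integrals that appear (near $g=0$ this is exactly where the hypothesis $b\ge0$, equivalently $\pi(0)<\infty$ and integrability of the $b/g$ term, is used), the applicability of Fubini, and the fact that $h_i$, although only piecewise $C^1$ (with a corner at $\eta=1$), is absolutely continuous on $(0,\infty)$, so the $\eta$-integration by parts is legitimate.
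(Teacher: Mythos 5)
Your proposal is correct: the product-rule expansion of $\partial_g[(g+1+w)F\pi k_j^2]$, the collapse of the two $F'/F$ contributions into $w\eta c_g F/(g+1)$, the boundary bookkeeping at $g=0$ (which is exactly where the two cases $b>0$ versus $b=0$ arise via $\pi(0)=0$ or $1$), and the final tally $-p/2+(p/2+n/2)+1=n/2+1$ all check out. The route differs from the paper's in one structural respect. The paper first applies the change of variables $u=\eta\{1+w/(g+1)\}$, so that the entire $\eta$-integral is packaged into $\zeta_i(w/(g+1),s)=\int u^{p/2+n/2}e^{-su/2}h_i^2(u/\{1+w/(g+1)\})\,\rd u$; a single integration by parts in $g$ (writing $w(g+1)^{n/2}(g+1+w)^{-p/2-n/2-1}$ as an exact derivative) then produces the $h_ih_i'$ term automatically through $\partial_g\zeta_i$, with no integration by parts in $\eta$ at all. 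You instead stay in the original variables and generate the $h_ih_i'$ term by a second integration by parts in $\eta$, using $\eta\,\partial_\eta F=(p/2+n/2)F-\eta c_gF$. Both are legitimate; yours is more elementary and makes the origin of each term transparent, at the cost of having to justify the extra $\eta$-integration by parts (vanishing of $\eta h_i^2F$ at $0$ and $\infty$, absolute continuity of $h_i$ across its corner at $\eta=1$), points you correctly flag. The paper's substitution sidesteps that but requires tracking the extra factor $\{1+w/(g+1)\}^{-p/2-n/2-1}$ and undoing the substitution at the end.
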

\begin{proof}
By change of variables $ u=\eta \{1+w/(g+1)\}$ with $\rd \eta/\rd u=1/\{1+w/(g+1)\}$, we have
\begin{align*}
&\iint (g+1)^{-1}F(g,\eta;w,s)h_i^2(\eta)\pi(g)k_j^2(g)\rd g\rd \eta \\
&=\iint \left(1+\frac{w}{g+1}\right)^{-p/2-n/2-1}\frac{u^{p/2+n/2}}{(g+1)^{p/2+1}}
\exp\left(-\frac{su}{2}\right) \\
&\quad \times h_i^2\left(\frac{u}{1+w/(g+1)}\right)\pi(g)k_j^2(g)\rd g\rd u \\
&=
\int \frac{(g+1)^{n/2}}{(g+1+w)^{p/2+n/2+1}}\zeta_i(w/(g+1),s) \left(\frac{g}{g+1}\right)^bk_j^2(g)\rd g,
\end{align*}
where
\begin{align*}
 \zeta_i(v,s)=\int u^{p/2+n/2}\exp\left(-\frac{su}{2}\right)h_i^2\left(\frac{u}{1+v}\right)\rd u.
\end{align*}
Note
\begin{align*}
&\frac{ w(g+1)^{n/2}}{(g+1+w)^{p/2+n/2+1}} \\
&=w\left(\frac{g+1}{g+1+w}\right)^{n/2}(g+1+w)^{-p/2-1} \\
&=w \left(1-\frac{w}{g+1+w}\right)^{n/2}(g+1+w)^{-p/2-1} \\
&=\frac{\rd }{\rd g}\left\{\frac{1}{n/2+1} \left(1-\frac{w}{g+1+w}\right)^{n/2+1}\right\}(g+1+w)^{-p/2+1}.
\end{align*}
Then an integration by parts gives
\begin{align}
& w\int \frac{(g+1)^{n/2}}{(g+1+w)^{p/2+n/2+1}}\zeta_i(w/(g+1),s) \left(\frac{g}{g+1}\right)^bk_j^2(g)\rd g \label{integ.parts.0} \\
&=\frac{1}{n/2+1}\left\{
\left[\left(1-\frac{w}{g+1+w}\right)^{n/2+1}\frac{\zeta_i(w/(g+1),s)}{(g+1+w)^{p/2-1}}\left(\frac{g}{g+1}\right)^bk^2_j(g)\right]_0^\infty \right. \notag \\
&\quad +
(p/2-1)\int_0^\infty \left(1-\frac{w}{g+1+w}\right)^{n/2+1}\frac{\zeta_i(w/(g+1),s)}{(g+1+w)^{p/2}} 
\left(\frac{g}{g+1}\right)^bk_j^2(g)\rd g
\notag \\
&\quad  - \int_0^\infty  \left(1-\frac{w}{g+1+w}\right)^{n/2+1}\frac{\zeta'_i(w/(g+1),s)}
{(g+1+w)^{p/2-1}}\left\{\frac{-w}{(g+1)^2}\right\} \left(\frac{g}{g+1}\right)^bk_j^2(g)\rd g \notag\\
&\quad  -
\int_0^\infty  \left(1-\frac{w}{g+1+w}\right)^{n/2+1}\frac{\zeta_i(w/(g+1),s)}{(g+1+w)^{p/2-1}}
\left\{\frac{\rd}{\rd g}\left(\frac{g}{g+1}\right)^b\right\}
k^2_j(g)\rd g\notag
\\
&\quad \left. - 2
\int_0^\infty  \left(1-\frac{w}{g+1+w}\right)^{n/2+1}\frac{\zeta_i(w/(g+1),s)}{(g+1+w)^{p/2-1}}
\left(\frac{g}{g+1}\right)^b k_j(g)k'_j(g)\rd g
 \right\}, \notag
\end{align}
where $\pi(0)=1$ for $b=0$, $\pi(0)=0$ for $b>0$, and
\begin{align*}
\frac{\rd}{\rd g}\left(\frac{g}{g+1}\right)^b=b\left(\frac{g}{g+1}\right)^{b-1}\frac{1}{(g+1)^2}=\frac{b}{g(g+1)}\left(\frac{g}{g+1}\right)^b,
\end{align*}
for $b>0$. Further we have
\begin{align*}
 \zeta'_i(v,s) &=\frac{\partial }{\partial v}\zeta_i(v,s) \\
&=-\frac{2}{(v+1)^2}\int u^{p/2+n/2+1}\exp\left(-\frac{su}{2}\right)h_i\left(\frac{u}{1+v}\right)h'_i\left(\frac{u}{1+v}\right)\rd u \\
&=-2(v+1)^{p/2+n/2}\int \eta^{p/2+n/2+1}\exp\left(-\frac{s(1+v)\eta}{2}\right)h_i(\eta)h'_i(\eta)\rd \eta .
\end{align*}
In \eqref{integ.parts.0}, note
\begin{align*}
& \left(1-\frac{w}{g+1+w}\right)^{n/2+1}\frac{1}{(g+1+w)^{p/2}}\left(\frac{g}{g+1}\right)^b \\
&=\left(1+\frac{w}{g+1}\right)^{-p/2-n/2-1}\frac{\pi(g)}{(g+1)^{p/2}}.
\end{align*}
Change of variables $ \eta=u/ \{1+w/(g+1)\}$ with $\rd u/\rd \eta=1+w/(g+1)$
completes the proof.
\end{proof}

\subsection{Lemmas for Lemma \ref{thm:a1a2a3}}
\begin{lemma}\label{LEM:main}
Assume $-n/2<m<p/2+1$ and that both $H(\eta)/\eta$ and $\Pi(g)/(g+1)^{m-1}$ are integrable. Then
\begin{align*}
& \iiiint \frac{\|x\|^2s^{n/2-1}}{(\|x\|^2/s)^m}
F(g,\eta;\|x\|^2/s,s) H(\eta)\Pi(g)\rd g\rd \eta\rd x\rd s\\
&=q_3(m)\int\frac{H(\eta)}{\eta}\rd \eta
\int \frac{\Pi(g)}{(g+1)^{m-1}}\rd g,
\end{align*} 
where
\begin{align*}
 q_3(m)=2^{p/2+n/2+1}\pi^{p/2}
 \frac{\Gamma(p/2+1-m)\Gamma(n/2+m)}{\Gamma(p/2)}.
\end{align*}
\end{lemma}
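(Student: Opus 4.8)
The plan is to evaluate the quadruple integral by Tonelli's theorem (the integrand is nonnegative in all the intended applications, where $H$ and $\Pi$ are products of squares of the auxiliary functions with a prior density), reducing it to a product of two one-dimensional integrals after carrying out the $x$- and $s$-integrations explicitly.

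First I would pass to polar coordinates in $x$: writing $\rho=\|x\|^2$, one has $\int_{\mathbb{R}^p}\phi(\|x\|^2)\rd x=\frac{\pi^{p/2}}{\Gamma(p/2)}\int_0^\infty\phi(\rho)\rho^{p/2-1}\rd\rho$. Substituting $w=\rho/s$ into $F$ from \eqref{EQ:F} gives $F(g,\eta;\rho/s,s)=(g+1)^{-p/2}\eta^{p/2+n/2}\exp\{-\eta\rho/(2(g+1))\}\exp\{-\eta s/2\}$, so the $\rho$- and $s$-dependence of the full integrand factors cleanly. The $\rho$-integral is $\int_0^\infty\rho^{p/2-m}\exp\{-\eta\rho/(2(g+1))\}\rd\rho=\Gamma(p/2+1-m)\,(2(g+1)/\eta)^{p/2+1-m}$, which converges precisely because $m<p/2+1$; the $s$-integral is $\int_0^\infty s^{n/2+m-1}\exp\{-\eta s/2\}\rd s=\Gamma(n/2+m)\,(2/\eta)^{n/2+m}$, which converges precisely because $m>-n/2$.

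Next I would collect the powers of $2$, of $\eta$, and of $g+1$ produced by these two Gamma integrals, together with the factors $\eta^{p/2+n/2}$ and $(g+1)^{-p/2}$ already present in $F$. The powers of $2$ sum to $p/2+n/2+1$; the net power of $\eta$ is $p/2+n/2-(p/2+1-m)-(n/2+m)=-1$; and the net power of $g+1$ is $-p/2+(p/2+1-m)=1-m$. Hence, after integrating out $x$ and $s$, the integrand is exactly $q_3(m)\,(H(\eta)/\eta)\,(\Pi(g)/(g+1)^{m-1})$, and a final application of Tonelli — legitimate because $\int H(\eta)/\eta\,\rd\eta<\infty$ and $\int\Pi(g)/(g+1)^{m-1}\,\rd g<\infty$ by hypothesis — splits the remaining double integral into the asserted product.

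The computation has no conceptual obstacle; the only points requiring care are (i) the bookkeeping of exponents, so that the net $\eta$-power collapses to $-1$ and the net $(g+1)$-power to $1-m$, and (ii) justifying the interchange of the order of integration, i.e. checking that $-n/2<m<p/2+1$ gives finiteness of the inner Gamma integrals and that the two stated integrability assumptions give finiteness of the outer ones.
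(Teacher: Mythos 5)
Your computation is correct and delivers exactly the stated constant $q_3(m)$, but your route differs from the paper's in a small yet genuine way. You exploit the fact that, in terms of $\rho=\|x\|^2$, the exponent in $F$ splits as $\exp\{-\eta\rho/(2(g+1))\}\exp\{-\eta s/2\}$, pass to polar coordinates, and evaluate two independent one-dimensional Gamma integrals (in $\rho$ and in $s$); the exponent bookkeeping you record ($2$-power $p/2+n/2+1$, net $\eta$-power $-1$, net $(g+1)$-power $1-m$) checks out, and the convergence conditions $m<p/2+1$ and $m>-n/2$ land exactly where the hypothesis $-n/2<m<p/2+1$ puts them. The paper instead rescales $x$ by $\sqrt{(1+g)s}$ (Jacobian $(1+g)^{p/2}s^{p/2}$), which turns the exponent into $-\tfrac{\eta s}{2}(\|y\|^2+1)$, integrates out $s$ to produce $\Gamma(p/2+n/2+1)$, and is then left with the Beta-type integral $\int\|y\|^{2(1-m)}(1+\|y\|^2)^{-p/2-n/2-1}\rd y$, evaluated by its auxiliary Lemma \ref{lem.gamma.beta}. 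Your factorization avoids that auxiliary lemma and the change of variables entirely, at the cost of nothing; the paper's substitution has the side benefit of being the same normalization it reuses elsewhere (e.g., in Lemma \ref{LEM:main.1}, where the extra weight $(1+|\log s|)^{-2}$ prevents the clean $\rho$--$s$ factorization from paying off in the same way). One minor point: Tonelli requires $H\geq 0$ and $\Pi\geq 0$, which you note holds in the intended applications ($H=h_i^2$, $\Pi$ built from $\pi k_j^2$); it would be cleaner to state this as part of the hypotheses rather than appeal to the applications.
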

\begin{proof}
\begin{align*}
&\iiiint \frac{\|x\|^2s^{n/2-1}}{(\|x\|^2/s)^m}
F(g,\eta;\|x\|^2/s,s) 
H(\eta) \Pi(g)\rd g \rd \eta\rd x\rd s \\
&= \iiiint \frac{\|x\|^2s^{n/2-1}}{(\|x\|^2/s)^m}
 \frac{\eta^{p/2+n/2}}{(g+1)^{p/2}}
\exp\left(-\frac{s\eta}{2}\left(\frac{\|x\|^2/s}{1+g}+1\right)\right)H(\eta) \Pi(g)
\rd g \rd \eta\rd x\rd s \\
& =\iiiint (g+1)^{p/2}s^{p/2}
\frac{\|y\|^2 s(g+1) s^{n/2-1}}{\|y\|^{2m}(1+g)^m }
\\ &\qquad\times
\frac{\eta^{p/2+n/2}}{(g+1)^{p/2}}\exp\left(-\frac{s\eta}{2}\left(\|y\|^2+1\right)\right)H(\eta) \Pi(g)
\rd g \rd \eta
\rd y\rd s \\
& =\frac{\Gamma(p/2+n/2+1)}{2^{-p/2-n/2-1}}
\int
\frac{\|y\|^{2(1-m)}\rd y}{(1+\|y\|^2)^{p/2+n/2+1}}
\int\frac{H(\eta)}{\eta}\rd \eta
\int \frac{\Pi(g)\rd g}{(g+1)^{m-1}} \\
& =\frac{\Gamma(p/2+n/2+1)}{2^{-p/2-n/2-1}}
 \frac{\pi^{p/2}\Gamma(p/2+1-m)\Gamma(n/2+m)}{\Gamma(p/2)\Gamma(p/2+n/2+1)}
\int\frac{H(\eta)}{\eta}\rd \eta
\int \frac{\Pi(g)\rd g}{(g+1)^{m-1}},
\end{align*}
where the second equality follows from change of variables $ y_i=x_i/(\sqrt{1+g}\sqrt{s})$ with Jacobian 
$ \left|\partial x/\partial y\right|=(1+g)^{p/2}s^{p/2}$ as well as
$w/(1+g)=\|y\|^2$ and $\|x\|^2=(1+g)s\|y\|^2$, and the last equality follows from Lemma \ref{lem.gamma.beta}.
\end{proof}

\begin{lemma}\label{LEM:main.1}
Assume $-n/2<m<p/2+1$ and $\Pi(g)/(g+1)^{m-1}$ is integrable. Then
\begin{align*}
& \iiiint \frac{\|x\|^2s^{n/2-1}}{(\|x\|^2/s)^m}
\frac{F(g,\eta;\|x\|^2/s,s) \Pi(g)}{(1+|\log s|)^2}
\rd g\rd \eta\rd x\rd s\\
&=2q_3(m)\int \frac{\Pi(g)\rd g}{(g+1)^{m-1}}.
\end{align*} 
\end{lemma}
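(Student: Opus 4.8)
The plan is to follow the proof of Lemma~\ref{LEM:main} essentially verbatim: the whole computation is the same, except that the factor $H(\eta)$ together with the integral $\int_0^\infty H(\eta)\eta^{-1}\rd\eta$ appearing there is now replaced by the factor $(1+|\log s|)^{-2}$ together with the elementary integral $\int_0^\infty s^{-1}(1+|\log s|)^{-2}\rd s$, which evaluates to $2$ and thus accounts for the constant in the statement.

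Concretely, I would make the change of variables $y_i=x_i/(\sqrt{1+g}\sqrt{s})$ for $i=1,\dots,p$, with Jacobian $|\partial x/\partial y|=(1+g)^{p/2}s^{p/2}$, under which $\|x\|^2/\{s(1+g)\}=\|y\|^2$ and $\|x\|^2=(1+g)s\|y\|^2$. Since every factor in the integrand is nonnegative, Tonelli's theorem permits integrating in any order. Collecting powers of $(1+g)$ and $s$, the integrand becomes a constant multiple of
\[
(1+g)^{1-m}\|y\|^{2(1-m)}s^{(p+n)/2}\eta^{(p+n)/2}\exp\!\left(-\frac{s\eta}{2}\bigl(\|y\|^2+1\bigr)\right)\frac{\Pi(g)}{(1+|\log s|)^2}.
\]

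Next I would integrate out $\eta$ by $\int_0^\infty \eta^{(p+n)/2}\exp(-s\eta(\|y\|^2+1)/2)\rd\eta=2^{(p+n)/2+1}\Gamma((p+n)/2+1)\,s^{-(p+n)/2-1}(\|y\|^2+1)^{-(p+n)/2-1}$; this cancels the power $s^{(p+n)/2}$ and leaves exactly the single $s$-factor $s^{-1}(1+|\log s|)^{-2}$. The substitution $u=\log s$ then gives $\int_0^\infty s^{-1}(1+|\log s|)^{-2}\rd s=\int_{-\infty}^\infty(1+|u|)^{-2}\rd u=2$. The remaining $y$-integral is precisely the one handled in Lemma~\ref{LEM:main}: in polar coordinates it reduces, via Lemma~\ref{lem.gamma.beta}, to $\int_{\mathbb{R}^p}\|y\|^{2(1-m)}(1+\|y\|^2)^{-(p+n)/2-1}\rd y=\pi^{p/2}\Gamma(p/2+1-m)\Gamma(n/2+m)/\{\Gamma(p/2)\Gamma((p+n)/2+1)\}$, which converges precisely because $-n/2<m<p/2+1$. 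Finally, multiplying these three one-dimensional evaluations by the remaining $g$-integral $\int(1+g)^{1-m}\Pi(g)\rd g=\int\Pi(g)(g+1)^{-(m-1)}\rd g$ (finite by hypothesis) and bookkeeping the constants against $q_3(m)=2^{(p+n)/2+1}\pi^{p/2}\Gamma(p/2+1-m)\Gamma(n/2+m)/\Gamma(p/2)$, the factor of $\Gamma((p+n)/2+1)$ cancels and one is left with $2q_3(m)\int\Pi(g)(g+1)^{-(m-1)}\rd g$, as claimed.

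There is no genuinely hard step here; the only points deserving care are (i) justifying the interchange of the order of integration, for which nonnegativity of the integrand and Tonelli suffice, and (ii) verifying that the hypotheses $-n/2<m<p/2+1$ and the integrability of $\Pi(g)/(g+1)^{m-1}$ are exactly what make each of the four iterated integrals converge, so that no boundary terms or divergences arise.
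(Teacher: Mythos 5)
Your proof is correct and follows essentially the same route as the paper: the same change of variables $y_i=x_i/(\sqrt{1+g}\sqrt{s})$, the same factorization into separate $\eta$-, $s$-, $y$-, and $g$-integrals, the evaluation $\int_0^\infty s^{-1}(1+|\log s|)^{-2}\rd s=2$ supplying the extra factor of $2$, and Lemma~\ref{lem.gamma.beta} for the $y$-integral. The constants and the role of the hypotheses all check out, so nothing further is needed.
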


\begin{proof}
\begin{align*}
& \iint \frac{\|x\|^2s^{n/2-1}}{(\|x\|^2/s)^m}
\frac{F(g,\eta;\|x\|^2/s,s) \Pi(g)}{(1+|\log s|)^2}
\rd g\rd \eta\rd x\rd s \\
&=
\iiiint \frac{\|x\|^2s^{n/2-1}}{(\|x\|^2/s)^m}
\frac{\eta^{p/2+n/2}}{(g+1)^{p/2}}\exp\left(-\frac{\eta s}{2}
\left\{\frac{\|x\|^2/s}{g+1}+1\right\}\right) 
\frac{\Pi(g)\rd g\rd \eta\rd x\rd s}{(1+ |\log s|)^2} \\
&= 
\iiiint  (1+g)^{p/2}s^{p/2}\frac{s^{n/2-1}\|y\|^2(1+g)s}{ \{(1+g)\|y\|^2\}^m}
 \\
&\quad \times 
\frac{\eta^{p/2+n/2}}{(g+1)^{p/2}}\exp\left(-\frac{\eta s}{2}
\left\{\|y\|^2+1\right\}\right)
\frac{ \Pi(g)}{(1+|\log s|)^2} \rd g\rd \eta\rd y\rd s\\
&= \frac{\Gamma(p/2+n/2+1)}{2^{-p/2-n/2-1}}  \int\frac{\rd s}{s(1+|\log s|)^2} 
\int \frac{\|y\|^{2(1-m)}\rd y}{(1+\|y\|^2)^{p/2+n/2+1}}\int \frac{\Pi(g)\rd g}{(g+1)^{m-1}} \\
&=2q_3(m)\int \frac{\Pi(g)\rd g}{(g+1)^{m-1}}.
\end{align*}
\end{proof}

\begin{lemma}\label{lem.gamma.beta}
 Let $y\in\mathbb{R}^p$. Assume $\beta>\alpha>-p/2$. Then
 \begin{align}
  \int_{\mathbb{R}^p}(\|y\|^2)^\alpha(1+\|y\|^2)^{-p/2-\beta}\rd y
=\pi^{p/2}\frac{\Gamma(p/2+\alpha)\Gamma(\beta-\alpha)}{\Gamma(p/2)\Gamma(p/2+\beta)}.
 \end{align}
\end{lemma}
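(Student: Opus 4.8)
The plan is to reduce the $p$-dimensional integral to a one-dimensional radial integral by passing to polar coordinates, and then to recognize the result as a classical Beta integral.

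First I would use the standard polar-coordinate identity that, for any integrable radial function,
\[
\int_{\mathbb{R}^p} f(\|y\|)\rd y = \frac{2\pi^{p/2}}{\Gamma(p/2)}\int_0^\infty f(r)\, r^{p-1}\rd r,
\]
the constant $2\pi^{p/2}/\Gamma(p/2)$ being the surface measure of the unit sphere $S^{p-1}$. Applying this with $f(r) = r^{2\alpha}(1+r^2)^{-p/2-\beta}$ gives
\[
\int_{\mathbb{R}^p}(\|y\|^2)^\alpha(1+\|y\|^2)^{-p/2-\beta}\rd y = \frac{2\pi^{p/2}}{\Gamma(p/2)}\int_0^\infty r^{2\alpha+p-1}(1+r^2)^{-p/2-\beta}\rd r.
\]
Next I would substitute $t=r^2$, so that $r^{2\alpha+p-1}\rd r = \tfrac12 t^{p/2+\alpha-1}\rd t$, turning the right-hand side into $\{\pi^{p/2}/\Gamma(p/2)\}\int_0^\infty t^{p/2+\alpha-1}(1+t)^{-p/2-\beta}\rd t$. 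The remaining integral is the Beta integral $\int_0^\infty t^{x-1}(1+t)^{-x-y}\rd t = B(x,y) = \Gamma(x)\Gamma(y)/\Gamma(x+y)$, valid for $x>0$, $y>0$, which itself follows from the change of variables $t=u/(1-u)$ mapping $(0,\infty)$ onto $(0,1)$. Taking $x = p/2+\alpha$ and $y=\beta-\alpha$, so that $x+y = p/2+\beta$, yields the asserted value $\pi^{p/2}\Gamma(p/2+\alpha)\Gamma(\beta-\alpha)/\{\Gamma(p/2)\Gamma(p/2+\beta)\}$.

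There is no substantive obstacle here, as the statement is a routine evaluation of a standard integral; the only points deserving a word of care are that the hypotheses $\beta>\alpha>-p/2$ are precisely what make $x=p/2+\alpha>0$ and $y=\beta-\alpha>0$, which guarantees absolute convergence of each integral above (near $t=0$ through $x>0$, near $t=\infty$ through $y>0$) and licenses the use of the Beta-function identity.
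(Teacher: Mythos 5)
Your proof is correct and follows essentially the same route as the paper: reduce to a radial integral, substitute $t=r^2$ (the paper goes directly to $u=\|y\|^2$), and evaluate the resulting integral as $\mathrm{Be}(p/2+\alpha,\beta-\alpha)$. Your explicit check that $\beta>\alpha>-p/2$ is exactly what makes both Beta parameters positive is a worthwhile addition, but the argument is the same.
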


\begin{proof}
 \begin{align*}
  \int_{\mathbb{R}^p}(\|y\|^2)^\alpha(1+\|y\|^2)^{-p/2-\beta}\rd y 
&=\frac{\pi^{p/2}}{\Gamma(p/2)}\int_0^\infty \frac{u^{p/2-1+\alpha}}{(1+u)^{p/2+\beta}}\rd u\\
&=\frac{\pi^{p/2}}{\Gamma(p/2)}\mathrm{Be}(p/2+\alpha, \beta-\alpha) \\
&=\pi^{p/2}\frac{\Gamma(p/2+\alpha)\Gamma(\beta-\alpha)}{\Gamma(p/2)\Gamma(p/2+\beta)}.
 \end{align*}
\end{proof}

\begin{lemma}\label{lem.cv}
Assume $p/2+a+1>0$ and $ 0\leq \gamma<p/2+a+1$. 
\begin{enumerate}
 \item \label{lem.cv.1}
\begin{align*}
 \int_0^\infty \int_0^\infty  F h_i^2(\eta) (g+1)^\gamma  \pi(g) \rd g\rd \eta 
=\frac{(1-z)^{p/2-\gamma+a+1}}{s^{p/2+n/2+1}} \mathcal{H}(z,s;i),
\end{align*}
where $z=w/(1+w)$ and
\begin{align}
& \mathcal{H}(z,s;i)\label{mcH.1} \\
&=
\int_0^1\int_0^\infty  
\frac{t^{p/2-\gamma+a}(1-t)^b}{(1-zt)^{p/2-\gamma+a+b+2}}
v^{p/2+n/2}\exp\left(-\frac{v}{2(1-zt)}\right)  h_i^2(v/s) \rd t\rd v.\notag
\end{align}
\item \label{lem.cv.2}
The function $\{(i+|\log s|)/i\}^2 \mathcal{H}(z,s;i)$ is bounded from above and below, where
the lower and upper bounds are independent of $i$, $z$ and $s$.
\item \label{lem.cv.3}
There exists a positive constant $q_4$, independent of $i$, $z$ and $s$, such that
\begin{equation}
\frac{\int_0^\infty \int_0^\infty  F h_i^2(\eta) (g+1)^\gamma  \pi(g) \rd g\rd \eta }{\int_0^\infty \int_0^\infty  F h_i^2(\eta)  \pi(g) \rd g\rd \eta }
\leq q_4 (w+1)^{\gamma}. 
\end{equation}
\end{enumerate}
\end{lemma}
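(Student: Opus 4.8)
The plan is to handle the three parts in order. \emph{Part~1} is just a change of variables: in the $g$-integral substitute $t=(1+w)/(1+w+g)\in(0,1)$, equivalently $g+1=(1-zt)/\{(1-z)t\}$, which is the substitution designed so that $1+w/(g+1)=(w+g+1)/(g+1)=1/(1-zt)$, since $w+g+1=\{(1-z)t\}^{-1}$. Then $g/(g+1)=(1-t)/(1-zt)$ and $\rd g=(1-z)^{-1}t^{-2}\rd t$, and collecting the powers of $(1-z)$, of $t$, of $(1-zt)$ and of $(1-t)$ produced by $(g+1)^{-p/2}$ in $F$, by $(g+1)^{\gamma}$, by $\pi(g)=(g+1)^{-a-2}\{g/(g+1)\}^{b}$ and by the Jacobian gives exactly $(1-z)^{p/2-\gamma+a+1}$, $t^{p/2-\gamma+a}$, $(1-zt)^{-(p/2-\gamma+a+b+2)}$ and $(1-t)^{b}$. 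A final substitution $v=s\eta$ in the remaining $\eta$-integral produces $s^{-(p/2+n/2+1)}$ and turns $\eta^{p/2+n/2}e^{-\eta s/\{2(1-zt)\}}h_i^2(\eta)$ into $v^{p/2+n/2}e^{-v/\{2(1-zt)\}}h_i^2(v/s)$, which is precisely $\mathcal H(z,s;i)$. Its finiteness uses $\gamma<p/2+a+1$ at $t=0$, and $b>-1$ with $n/2+\gamma-a>0$ at $t=1$ (after dominating $(1-t)^{b}(1-zt)^{-(p/2-\gamma+a+b+2)}$ via $1-zt\ge1-t$), plus $0\le h_i^2\le1$.

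\emph{Part~2} is the technical core. The engine is the elementary two-sided bound, valid for $i\ge1$,
\[
 h_i(\eta_1)h_i(\eta_2)\ \le\ h_i(\eta_1\eta_2)\ \le\ (1+|\log\eta_2|)\,h_i(\eta_1),
\]
each of which reduces, after clearing denominators, to the triangle inequality for $|\log\eta_1+\log\eta_2|$ together with $i\ge1$; applying these with the pair $\{v,1/s\}$ (in either order) and $h_i(1/s)=h_i(s)$ gives $h_i^2(v)h_i^2(s)\le h_i^2(v/s)\le(1+|\log v|)^2h_i^2(s)$, so $\mathcal H(z,s;i)$ is squeezed between $h_i^2(s)$ times two $s$-free integrals. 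For the upper bound on $\{(i+|\log s|)/i\}^2\mathcal H=h_i^{-2}(s)\mathcal H$ I would insert the right inequality, substitute $v=2(1-zt)r$ to pull out $(2(1-zt))^{p/2+n/2+1}$, split $(1+|\log(2(1-zt)r)|)^2$ into an $r$-part (integrating to a finite Gamma-type constant against $r^{p/2+n/2}e^{-r}$) and a part $\{1+\log^2(2(1-zt))\}$, and then verify that $\int_0^1 t^{p/2-\gamma+a}(1-t)^{b}(1-zt)^{\beta}\{1+\log^2(2(1-zt))\}\rd t$, with $\beta=n/2+\gamma-a-b-1$, is bounded uniformly in $z\in[0,1)$, the point being that $1-zt\ge1-t$ controls both the possible singularity of $(1-zt)^{\beta}$ near $t=z=1$ and the logarithmic factor, with integrability guaranteed by $n/2+\gamma-a>0$. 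For the lower bound I would instead insert the left inequality and shrink the domain: restrict the $v$-integral to $v\in[1/e,e]$ (where $h_i^2(v)\ge1/4$) and the $t$-integral to $t\in[0,\tfrac12]$ (where $1-zt\in[\tfrac12,1]$), on which $e^{-v/\{2(1-zt)\}}$, $(1-zt)^{-(p/2-\gamma+a+b+2)}$ and $(1-t)^{b}$ are all bounded below by positive constants while $\int_0^{1/2}t^{p/2-\gamma+a}\rd t>0$ because $p/2-\gamma+a+1>0$. Together these yield $c\,h_i^2(s)\le\mathcal H(z,s;i)\le C\,h_i^2(s)$ with $c,C$ independent of $i$, $z$, $s$.

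\emph{Part~3} is then immediate: by Part~1 the ratio in question equals $(1-z)^{-\gamma}\,\mathcal H_{\gamma}(z,s;i)/\mathcal H_{0}(z,s;i)$, where $\mathcal H_{\gamma}$ and $\mathcal H_{0}$ are $\mathcal H$ with the given exponent and with exponent $0$; since $1-z=1/(1+w)$ and, by Part~2, $\mathcal H_{\gamma}\le C_{\gamma}h_i^2(s)$ and $\mathcal H_{0}\ge c_{0}h_i^2(s)$ uniformly, the ratio is at most $(1+w)^{\gamma}C_{\gamma}/c_{0}$, so one takes $q_4=C_{\gamma}/c_{0}$. I expect the main obstacle to be exactly the $z$-uniformity in Part~2: the $v$-integral concentrates at the scale $1-zt$, which sweeps all of $(0,1]$, so after extracting $(1-zt)^{p/2+n/2+1}$ one must confirm that the residual $t$-integral — logarithmic correction from the slow variation of $h_i$ included — is a Beta-type integral bounded above, and (via the $t\le\tfrac12$ piece) bounded below, as $z\uparrow1$; everything else is routine bookkeeping.
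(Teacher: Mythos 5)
Your Part~1 is the paper's own change of variables ($t=(1+w)/(1+w+g)$ is exactly the inverse of the paper's $g=(1-t)/\{(1-z)t\}$), and your Part~3 is the same two-line deduction from Parts~1 and~2 that the paper makes. The interesting divergence is in Part~2. The paper normalizes the $(t,v)$-integrand into a probability density $f(v\mid z)$ studied in Maruyama and Strawderman (2021), writes $\mathcal H(z,s;i)=\psi(z)E[h_i^2(V/s)\mid z]$, imports from that paper the uniform-in-$z$ facts ($0<\psi(z)<\infty$, finiteness of $\max_z E[|\log V|^2\mid z]$, polynomial growth of $f$ at $0$ and exponential decay at $\infty$), and then bounds $\{(i+|\log s|)/i\}^2E[h_i^2(V/s)\mid z]$ by a case analysis on $s\lessgtr1$ and $v\lessgtr s$ (upper bound) and by Jensen's inequality applied to $1/(1+|\log v|)$ (lower bound). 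You instead prove the two-sided pointwise inequality $h_i^2(v)h_i^2(s)\le h_i^2(v/s)\le(1+|\log v|)^2h_i^2(s)$ — both halves do reduce to the triangle inequality for $\log$ plus $i\ge1$, and I checked they are correct — which collapses the paper's case analysis into a single squeeze of $\mathcal H/h_i^2(s)$ between two $s$- and $i$-free integrals; you then verify the uniform-in-$z$ boundedness of those integrals directly via the rescaling $v=2(1-zt)r$ and the bound $1-zt\ge1-t$. What your route buys is self-containedness and a cleaner upper bound (no splitting at $v=s$); what it costs is that you must re-derive, by hand, the uniform Beta-integral bounds that the paper simply cites as Lemmas~A.2 and~A.4 of MS21 — including the integrability condition $n/2+\gamma-a>0$ at the $t=1$ endpoint, which you correctly identify and which holds in all cases the paper uses ($a\le-1$, $\gamma\ge0$). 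Your lower-bound device (restrict to $v\in[1/e,e]$, $t\le1/2$, where $h_i^2(v)\ge1/4$ and every factor is bounded below) is also sound and replaces the paper's Jensen step. I see no gap; the argument is complete modulo the routine bookkeeping you yourself flag.
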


\begin{proof}
\ [Part \ref{lem.cv.1}] Note
\begin{align*}
& \iint  F (g+1)^\gamma  h_i^2\pi \rd g\rd \eta \\
& =\iint  
\frac{\eta^{p/2+n/2}}{(g+1)^{p/2}}\exp\left\{-\frac{\eta s}{2} 
\left(\frac{w}{g+1}+1\right)\right\} h_i^2(\eta)
(g+1)^{\gamma-a-2} \left(\frac{g}{g+1}\right)^{b}
 \rd g\rd \eta.
\end{align*}
Apply the change of variables 
\begin{align*}
 g=\frac{1-t}{(1-z)t} \text{ where } z=\frac{w}{w+1}
\end{align*}
with
\begin{align*}
 g+1=\frac{1-zt}{(1-z)t}, \quad 1+\frac{w}{g+1}=1+\frac{z/(1-z)}{g+1}=\frac{1}{1-zt},\quad \left|\frac{\rd g}{\rd t}\right|=\frac{1}{(1-z)t^2}.
\end{align*}
Then
\begin{align*}
&=\iint \left(\frac{(1-z)t}{1-zt}\right)^{p/2-\gamma+a+2}
\left(\frac{1-t}{1-zt}\right)^{b} \frac{\eta^{p/2+n/2}}{(1-z)t^2}
\exp\left(-\frac{\eta s}{2(1-zt)}\right)  h_i^2(\eta) \rd t\rd \eta \\
 &=(1-z)^{p/2-\gamma+a+1}\iint  
\frac{t^{p/2-\gamma+a}(1-t)^{b}}{(1-zt)^{p/2-\gamma+a+b+2}}
\eta^{p/2+n/2}\exp\left(-\frac{\eta s}{2(1-zt)}\right)  h_i^2(\eta) \rd t\rd \eta \\
 &=\frac{(1-z)^{p/2-\gamma+a+1}}{s^{p/2+n/2+1}}\iint  
\frac{t^{p/2-\gamma+a}(1-t)^{b}}{(1-zt)^{p/2-\gamma+a+b+2}}
v^{p/2+n/2}\exp\left(-\frac{v}{2(1-zt)}\right)  h_i^2(v/s) \rd t\rd v.
\end{align*}

[Part \ref{lem.cv.2}] Let the probability density investigated in \cite{Maruyama-Strawderman-2020-arxiv} be
\begin{align*}
 f(v\mymid z)&=\frac{v^{(p+n)/2}}{\psi(z)}
\int_0^1 \frac{t^{p/2+a-\gamma}(1-t)^b}{(1-zt)^{p/2-\gamma+a+b+2}}\exp\left(-\frac{v}{2(1-zt)}\right) \rd t,\label{fff}
\end{align*} 
with normalizing constant $\psi(z)$ given by
\begin{align*}
  \psi(z) = \int_0^\infty\int_0^1 \frac{t^{p/2+a-\gamma}(1-t)^b}{(1-zt)^{p/2-\gamma+a+b+2}}v^{(p+n)/2}\exp\left(-\frac{v}{2(1-zt)}\right)\rd v \rd t.
\end{align*}
Then
\begin{align*}
 \mathcal{H}(z,s;i)
=  \psi(z)E\left[h_i^2(V/s)\mymid z\right].  
\end{align*}

By Lemma A.4 of \cite{Maruyama-Strawderman-2020-arxiv}, $0<\psi(z)<\infty$ for $z\in[0,1]$. Hence
we focus on
\begin{align*}
 \left(\frac{i+|\log s|}{i}\right)^2 
E\left[h_i^2(V/s)\mymid z\right].
\end{align*}  

$\bm{\langle}\!\bm{\langle}$lower bound$\bm{\rangle}\!\bm{\rangle}$ 
Note
\begin{equation}\label{hi.hyouka.1}
\begin{split}
 \frac{i+|\log s|}{i+|\log v/s|} 
\geq \frac{i+|\log s|}{i+|\log s|+|\log v|}\geq \frac{1}{1+|\log v|}.
\end{split}
\end{equation}
By the Jensen inequality, we have
\begin{align*}
  \left(\frac{i+|\log s|}{i}\right)^2 
E\left[h_i^2(V/s)\mymid z\right]\geq \left(\frac{1}{1+E[|\log V||z]}\right)^2
\geq \left(\frac{1}{1+\max_z E[|\log V||z]}\right)^2.
\end{align*}

\smallskip

$\bm{\langle}\!\bm{\langle}$$0<s<1$, upper bound$\bm{\rangle}\!\bm{\rangle}$ Assume $0<s<1$.
When $v\geq s$, we have
\begin{equation}\label{hi.hyouka.3}
 \begin{split}
 \left(\frac{i+\log(1/s)}{i+\log v+\log(1/s)}\right)^2 
 &= \left(1-\frac{\log v}{i+\log v+\log(1/s)}\right)^2 \\
&\leq 2+2\frac{|\log v|^2}{\{i+\log v+\log(1/s)\}^2} \\
&\leq 2+2|\log v|^2.
\end{split}
\end{equation}
When $v<s$, we have
\begin{align*}
 \left(\frac{i+\log(1/s)}{i+\log (s/v)}\right)^2\leq
 \left(\frac{i+\log(1/s)}{i}\right)^2 
\leq\{1+\log(1/s)\}^2.
\end{align*}
Then
\begin{align*}
 &\left(\frac{i+\log(1/s)}{i}\right)^2 E\left[h_i^2(V/s)\mymid z\right]  \\
 &=\int_0^s \left(\frac{i+\log(1/s)}{i+\log (s/v)}\right)^2 f(v\mymid z)\rd v
 +\int_s^\infty \left(\frac{i+\log(1/s)}{i+\log (v/s)}\right)^2f(v\mymid z)\rd v \\
 &=\int_0^s \{1+\log(1/s)\}^2 f(v\mymid z)\rd v
 +\int_s^\infty \{2+2|\log v|^2\}f(v\mymid z)\rd v \\
 &\leq \max_{z\in[0,1]}\left( \{1+\log(1/s)\}^2\int_0^s  f(v\mymid z)\rd v\right)
 +2+2\max_{z\in[0,1]}E[|\log V|^2\mymid z].
\end{align*}
As in Lemma A.2 of \cite{Maruyama-Strawderman-2020-arxiv}, $f(v\mymid z)$ is regarded as a generalization of Gamma distribution. 
We have $f(0\mymid z)=0$ and $f(v\mymid z)$ grows with polynomial order around zero. Hence
\begin{align*}
 \sup_{z\in[0,1],s\in(0,1)}\left(\{1+\log(1/s)\}^2\int_0^s  f(v\mymid z)\rd v\right)<\infty.
\end{align*}

\medskip

$\bm{\langle}\!\bm{\langle}$$s>1$, upper bound$\bm{\rangle}\!\bm{\rangle}$
Assume $s>1$ and  $v\leq s$.
As in \eqref{hi.hyouka.3}, we have
\begin{align*}
 \left(\frac{i+\log s}{i+\log (s/v)}\right)^2 
 &= \left(1-\frac{\log (1/v)}{i+\log (s/v)}\right)^2 \\
&\leq 2+2\frac{|\log v|^2}{\{i+\log (s/v)\}^2} \\
&\leq 2+2|\log v|^2.
\end{align*}
When $v>s$,
\begin{align*}
 \left(\frac{i+\log s}{i+\log (v/s)}\right)^2 
\leq  \left(\frac{i+\log s}{i}\right)^2 \leq(1+\log s)^2.
\end{align*}
Then we have
\begin{align*}
 &\left(\frac{i+\log s}{i}\right)^2 E\left[h_i^2(V/s)\mymid z\right]  \\
 &=\int_0^s \left(\frac{i+\log s}{i+\log (s/v)}\right)^2 f(v\mymid z)\rd v
 +\int_s^\infty \left(\frac{i+\log s}{i+\log (v/s)}\right)^2f(v\mymid z)\rd v \\
 &\leq \int_0^s \left(2+2|\log v|^2\right) f(v\mymid z)\rd v
+\int_s^\infty (1+\log s)^2 f(v\mymid z)\rd v \\
 &\leq 2+2\max_{z\in[0,1]}E[|\log V|^2\mymid z]+ \max_{z\in[0,1]}\left((1+\log s)^2\int_s^\infty  f(v\mymid z)\rd v\right).
\end{align*}
As in Lemma A.2 of \cite{Maruyama-Strawderman-2020-arxiv}, $f(v\mymid z)$ is regarded as a generalization of Gamma distribution. 
We have $f(v\mymid z)$ is with exponential decay at infinity. Hence
\begin{align*}
 \max_{z\in[0,1],s\in(1,\infty)}\left((1+\log s)^2\int_s^\infty  f(v\mymid z)\rd v\right)<\infty.
\end{align*}

[Part \ref{lem.cv.3}] Part \ref{lem.cv.3} follows from Parts \ref{lem.cv.1} and \ref{lem.cv.2}.
\end{proof}


\end{document}